\DeclareSymbolFont{symbols}{OMS}{cmsy}{m}{n}
\newcommand{\commentout}[1]{}
\numberwithin{equation}{section} 
\newtheorem{thm}{Theorem}[section]
\newtheorem{lem}[thm]{Lemma}
\newtheorem{cor}[thm]{Corollary}
\newtheorem{prop}[thm]{Proposition}
\theoremstyle{definition}
\newtheorem{defi}[thm]{Definition}
\newtheorem{notation}[thm]{Notation}
\theoremstyle{remark}
\newtheorem{rmk}[thm]{Remark}
\newcommand{\Hil}{\mathcal{H}}
\newcommand{\slot}{{~\cdot~}}
\newcommand{\C}{\mathcal{C}}
\newcommand{\cF}{\mathcal{F}}
\newcommand{\cE}{\mathcal{E}}
\newcommand{\cP}{\mathcal{P}}
\renewcommand{\P}{\mathcal{P}}
\newcommand{\cC}{\mathcal{C}}
\newcommand{\cS}{\mathcal{S}}
\newcommand{\A}{\mathcal{A}}
\newcommand{\B}{\mathcal{B}}
\newcommand{\M}{\mathcal{M}}
\newcommand{\N}{\mathcal{N}}
\newcommand{\RR}{\mathbb{R}}
\newcommand{\CC}{\mathbb{C}}
\DeclareMathOperator{\End}{End}
\DeclareMathOperator{\Rep}{Rep}
\DeclareMathOperator{\Hom}{Hom}
\DeclareMathOperator{\Ind}{Ind}
\DeclareMathOperator{\Ad}{Ad}
\DeclareMathOperator{\id}{id}
\DeclareMathOperator{\Tr}{Tr}
\DeclareMathOperator{\supp}{supp}
\newcommand{\lqq}{\lq\lq}
\newcommand{\rqq}{\rq\rq\@\xspace}
\newcommand{\dd}{\,\mathrm{d}}
\DeclareRobustCommand{\eg}{e.g.\@\xspace}
\DeclareRobustCommand{\cf}{cf.\@\xspace}
\DeclareRobustCommand{\Cf}{Cf.\@\xspace}
\DeclareRobustCommand{\ie}{i.e.\@\xspace}
\DeclareRobustCommand{\eq}{eq.\@\xspace}
\DeclareRobustCommand{\Sec}{Sec.\@\xspace}
\DeclareRobustCommand{\Prop}{Prop.\@\xspace}
\DeclareRobustCommand{\Lem}{Lem.\@\xspace}
\DeclareRobustCommand{\Cor}{Cor.\@\xspace}
\DeclareRobustCommand{\Thm}{Thm.\@\xspace}
\DeclareRobustCommand{\Def}{Def.\@\xspace}
\DeclareRobustCommand{\Rmk}{Rmk.\@\xspace}
\DeclareRobustCommand{\etc}{%
    \@ifnextchar{.}%
        {etc}%
        {etc.\@\xspace}%
}
\newcommand{\Cstar}{$C^\ast$\@\xspace}
\def\u1net{{\A_\RR}}
\DeclareMathOperator*{\Span}{Span}
\DeclareMathOperator*{\rank}{rank}
\DeclareMathOperator{\Trig}{Trig}
\DeclareMathOperator*{\UCP}{UCP}
\DeclareMathOperator{\Extr}{Extr}
\def\III{{I\!I\!I}}
\def\II{{I\!I}}
\renewcommand{\slot}{\,\cdot\,}
\newcommand{\Cred}{{C^\ast_\mathrm{red}}}
\newcommand{\iotabar}{{\bar\iota}}
\newcommand{\rhobar}{{\bar\rho}}
\newcommand{\sigmabar}{{\bar\sigma}}
\newcommand{\rbar}{{\bar r}}
\def\subsection{\@startsection{subsection}{2}%
  \z@{.6\linespacing\@plus.7\linespacing}{.5\linespacing}%
  {\normalfont\bfseries}}
\begin{document}

\date{} 
\dateposted{} 

\title{Galois Correspondence and Fourier Analysis on Local Discrete Subfactors}

\address{Department of Mathematics, Morton Hall 321, 1 Ohio University, Athens, OH 45701, USA}
\author{Marcel Bischoff}
\email{bischoff@ohio.edu}
\address{Dipartimento di Matematica, Universit\`a di Roma Tor Vergata, Via della Ricerca Scientifica, 1, I-00133 Roma, Italy}
\author{Simone Del Vecchio}
\email{delvecchio@mat.uniroma2.it}
\address{Dipartimento di Matematica, Universit\`a di Roma Tor Vergata, Via della Ricerca Scientifica, 1, I-00133 Roma, Italy 
\emph{and} Department of Mathematics, Vanderbilt University, 1326 Stevenson Center, Nashville, TN 37240, USA}
\author{Luca Giorgetti}
\email{luca.giorgetti@vanderbilt.edu}
\thanks{M.B.\ is supported by NSF DMS grant 1700192/1821162 \emph{Quantum Symmetries and Conformal Nets}. S.D.V.\ is supported by MIUR Excellence Department Project awarded to the Department of Mathematics, University of Rome Tor Vergata, CUP E83C18000100006. L.G.\ is supported by the European Union's Horizon 2020 research and innovation programme H2020-MSCA-IF-2017 under Grant Agreement 795151 \emph{Beyond Rationality in Algebraic CFT: mathematical structures and models}.}

\begin{abstract}
Discrete subfactors include a particular class of infinite index subfactors and all finite index ones. A discrete subfactor is called local when it is braided and it fulfills a commutativity condition motivated by the study of inclusion of Quantum Field Theories in the algebraic Haag--Kastler setting. In \cite{BiDeGi2021}, we proved that every irreducible local discrete subfactor arises as the fixed point subfactor under the action of a canonical compact hypergroup. In this work, we prove a Galois correspondence between intermediate von Neumann algebras and closed subhypergroups, and we study the subfactor theoretical Fourier transform in this context. Along the way, we extend the main results concerning $\alpha$-induction and $\sigma$-restriction for braided subfactors previously known in the finite index case.
\end{abstract}

\maketitle
\tableofcontents

\setcounter{tocdepth}{3}


\section{Introduction}

The first surprising result which came out of the theory of subfactors is that the Jones index \cite{Jo1983}, a number which measures the relative size of an infinite-dimensional \lqq continuous\rqq tracial factor (a von Neumann algebra endowed with a non-zero tracial state and whose center consists only of the scalar multiples of the identity) inside another factor of the same type can only take discrete values between 1 and 4, and every value above 4. Another unexpected fact which appeared soon after, and which gives an intuition on the previously mentioned breakthrough, is that all possible inclusions of such factors can be described by some kind of symmetry \lqq group-like\rqq object (finite when the index is finite) of the bigger factor, solely determined by the relative position of the smaller factor.

This point of view has been adopted by Ocneanu \cite{Oc1988}, who introduced an invariant for finite index finite depth $\II_1$ subfactors, which he called paragroup and which he used to give a list (later proven to be a complete list as a consequence of Popa's classification theorem \cite{Po1990}) of all possible subfactors with index less than $4$. An abstract paragroup, see also \cite{EvKa1998Book}, is a generalization of a (finite) group together with its unitary representations, where the underlying sets are replaced by a pair of graphs and the group composition law is replaced by the concatenation of paths. In the subfactor context, the paragroup is designed to describe the collection of higher relative commutants of the subfactor arising from the iterated Jones basic construction. 
The higher relative commutants can also be equivalently described in the language of Popa's standard $\lambda$-lattices \cite{Po1995} and Jones' planar algebras \cite{Jo1999}, or categorically as hom spaces in the $2$-\Cstar-category (with two objects $\N$ and $\M$) of $\M$-$\M$, $\M$-$\N$, $\N$-$\M$ and $\N$-$\N$ bimodules generated by the standard $\M$-$\N$ bimodule ${}_{\M} L^2\M_{\N}$ of the subfactor $\N\subset\M$. Throughout this paper we mainly deal with irreducible subfactors, namely with those having trivial relative commutant $\N'\cap\M = \CC 1$. 

As already mentioned by Ocneanu \cite{Oc1988} in the finite index finite depth setting, the two easiest non-group families of examples of paragroups are given by quantum groups and by quotients of groups by non-normal subgroups. 

The first family corresponds to subfactors with depth 2, namely those such that the 3-steps relative commutant $\N' \cap \M_2$ is a factor, where $\N\subset\M\subset\M_1\subset\M_2$ is the beginning of the Jones tower. More precisely, assuming irreducibility and depth 2, there is a finite dimensional Kac algebra (a Hopf *-algebra) in the finite index case \cite{Lo1994}, \cite{Sz1994}, \cite{Da1996}, or a Woronowicz compact quantum group (in the von Neumann algebraic sense \cite{Va2001}, \cite{KuVa2003}) in the infinite index case (assuming the existence of a normal faithful conditional expectation) \cite{HeOc1989}, \cite{EnNe1996}, acting on $\M$ such that $\N$ is the fixed point subalgebra. For depth 2 subfactors, the intermediate algebras $\P$ sitting in $\N\subset\P\subset\M$ are also known to correspond to \lqq subgroups\rqq of the quantum group associated with $\N\subset\M$, via a Galois-type correspondence \cite{NaTa1960}, \cite{IzLoPo1998}, \cite{To2009}.

In this paper, continuing the analysis of \cite{Bi2016}, \cite{BiDeGi2021}, we consider subfactors which are somehow orthogonal to those with depth 2, and which include the second family of examples of paragroups mentioned above (quotients of groups by non-normal subgroups are in fact double coset hypergroups). These subfactors are called local and they appear naturally in the algebraic formulation of Quantum Field Theory \cite{Ha}. They are orthogonal to depth 2 subfactors in the sense that a subfactor which is both local and depth 2 is necessarily a classical compact group fixed point subfactor. Roughly speaking, a subfactor is local if the tensor \Cstar-category generated by the $\N$-$\N$ bimodule ${}_{\N} L^2\M_{\N}$ is braided and if an additional commutativity constraint involving the Pimsner--Popa bases \cite{Po1995a} and the braiding holds.
Assuming irreducibility and locality, in the finite index case there is a finite hypergroup (in the sense of \cite{SuWi2003}) acting on $\M$ and having $\N$ as the fixed point subalgebra \cite{Bi2016}. 
In the infinite index case (assuming a regularity condition called discreteness in \cite{IzLoPo1998}) the same holds for a compact hypergroup \cite{BiDeGi2021}. The subfactor theoretical hypergroup is easy to define. As a set, it consists of all extreme (in the sense of convex sets) $\N$-bimodular unital completely positive maps from $\M$ to $\M$. By definition, it contains the $\N$-fixing *-automorphisms of $\M$, hence it can be regarded as a collection of \lqq generalized gauge symmetries\rqq of $\N\subset\M$ acting on $\M$ by ucp maps. We denote it by $K(\N\subset\M)$. 
\footnote{In the finite index case, the hypergroup structure of $K(\N\subset\M)$ is completely determined by the 2-steps relative commutants $\N'\cap\M_1$ and $\M'\cap\M_2$ together with the subfactor theoretical Fourier transform, \cf Section \ref{sec:Fourier}.}

The purpose of this paper is twofold. On the one hand, we prove a Galois-type correspondence between the intermediate subalgebras $\P$ sitting in $\N\subset\P\subset\M$ and the closed subhypergroups $H$ of $K(\N\subset\M)$.
On the other hand, we study the subfactor theoretical Fourier transform (mainly in the case of local discrete subfactors), we relate it to the hypergroup theoretical Fourier transform and we prove classical inequalities and uncertainty principles.

In Section \ref{sec:prelim}, we review some basics of subfactor theory with emphasis on irreducible type $\III$ subfactors. 
We also recall the results from \cite{BiDeGi2021} which we need in the following sections. 
In particular in Section \ref{sec:dualitydominated}, assuming that $\N\subset\M$ is discrete and local, we recall the identification of the 2-steps relative commutant $\M' \cap \M_2$ with the abelian von Neumann algebra of essentially bounded functions on $K(\N\subset\M)$ with respect to the Haar measure (Proposition \ref{prop:gammagammaisLinfty}). 
We also recall the identification of the convex set of $\N$-bimodular ucp maps $\M\to\M$, denoted by $\UCP_\N(\M)$, with the probability Radon measures on $K(\N\subset\M)$ (Theorem \ref{thm:dualityishomeo}).

In Section \ref{sec:alphaind}, we extend the definitions of $\alpha$-induction and $\sigma$-restriction, introduced in \cite{LoRe1995}, \cite{BcEv1998-I} based on an idea of Roberts \cite{Ro1976}, \cite{Ro1990}, from finite to infinite index discrete subfactors. For later use, we prove the \lqq main formula\rqq for $\alpha$-induction (Theorem \ref{thm:mainformulaalpha}) and the $\alpha\sigma$-reciprocity theorem (Theorem \ref{thm:alphasigmarecip}) for local discrete subfactors. These results should be compared with those contained in \cite{Xu2005} for subfactors arising from strongly additive pairs of conformal nets.

In Section \ref{sec:intermediate}, we show that if $\N\subset\M$ is discrete and local and $\P$ sits in between $\N\subset\P\subset\M$, then $\P\subset\M$ is also discrete and local (Theorem \ref{thm:intermdiscretelocal}). Note that the intermediate inclusion $\P\subset\M$ is harder to treat than $\N\subset\P$, in the sense that it does not even admit in general (in the absence of discreteness and locality) a normal faithful conditional expectation. See \cite{IzLoPo1998}, \cite{To2009} and references therein.

In Section \ref{sec:Galois}, we show the Galois-type correspondence between intermediate algebras $\P$ and closed subhypergroups $H$ of $K(\N\subset\M)$. Given $H$, the associated $\P$ is given by the $H$-fixed point subalgebra $\M^H$. Given $\P$, the associated $H$ is the set of extreme $\P$-fixing ucp maps $\M\to\M$. The two maps are each other's inverse and $H = K(\P\subset\M)$ (Theorem \ref{thm:galoiscorresp}).

In Section \ref{sec:Fourier}, we study the Fourier transform for local discrete subfactors, possibly with infinite index, and for the associated compact hypergroups. The Fourier transform for subfactors (and for the associated paragroups) has been introduced by Ocneanu \cite{Oc1991} in the finite index finite depth $\II_1$ subfactor setting. Since then, it has been a cornerstone in the analysis of subfactors. More recently, is has been extensively studied for finite index subfactors and planar algebras \cite{JiLiWu2016}, for Kac algebras \cite{LiWu2017} and locally compact quantum groups \cite{JiLiWu2018}, proving a number of inequalities and uncertainty principles which generalize classical results from the Fourier analysis on groups. See \cite{JaJiLiReWu2020} for a concise description of the program. In the type $\III$ setting, the Fourier transform can be naturally defined for infinite index subfactors as well.
It is a linear map running between the 2-steps relative commutants $\M' \cap \M_2$ and $\N'\cap \M_1$. We denote it by $\cF: \M' \cap \M_2 \to \N'\cap \M_1$.

In Section \ref{sec:FourierUCP}, we extend the subfactor theoretical Fourier transform to the complex vector space generated by all $\N$-bimodular ucp maps $\M\to\M$ (Proposition \ref{prop:FhatisF}), denoted by $\Span_\CC(\UCP_\N(\M))$, in which the natural domain of definition of the Fourier transform $\M' \cap \M_2$ embeds. We regard this vector space as a noncommutative analogue of the complex bounded Radon measures associated with the subfactor. The composition and a notion of adjoint in $\Span_\CC(\UCP_\N(\M))$ \cite{AcCe1982} provide natural candidates for a convolution and an adjoint of \lqq noncommutative measures\rqq.

In the subsequent sections, we assume in addition that $\N\subset\M$ is discrete and local. In Section \ref{sec:classicalF}, we identify the Fourier transform on $\N\subset\M$ with the ordinary hypergroup theoretical Fourier transform on $K(\N\subset\M)$. In Section \ref{sec:Finequalities}, we show Parseval's identity (Proposition \ref{prop:Parseval}) and Hausdorff--Young's inequality (Proposition \ref{prop:Hausdorff-Young}). In Section \ref{sec:invconv}, we introduce an additional multiplication (called convolution and denoted by $x\ast y$) and an involution operation (denoted by $x^\sharp$) on the von Neumann algebra $\M'\cap\M_2$. These operations are mapped by the Fourier transform to the ordinary product and adjoint of bounded linear operators, $\cF(x\ast y) = \cF(x)\cF(y)$ and $\cF(x^\sharp) = \cF(x)^*$, but they are not defined by these relations, namely $x\ast y := \cF^{-1}(\cF(x)\cF(y))$ and $x^\sharp := \cF^{-1}(\cF(x)^*)$, as it is usually done for finite index subfactors / planar algebras.
Indeed, the inverse Fourier transform $\cF^{-1}$ is globally defined on $\N'\cap \M_1$ if and only if the index is finite. Instead, they are defined by means of the embedding of $\M'\cap\M_2$ into $\Span_\CC(\UCP_\N(\M))$
which corresponds to the embedding of $L^\infty(K(\N\subset\M),\mu_K)$ into the Radon measures on $K(\N\subset\M)$ given by $f \mapsto f \dd \mu_K$ (Remark \ref{rmk:embeddingiflocal}), where $\mu_K$ is the Haar measure.
In Section \ref{sec:convoineq}, we show Young's inequality for the convolution (Proposition \ref{prop:Young}). In Section \ref{sec:inversionuncertainty}, we show the inversion formula (Proposition \ref{prop:inversion}) and a Donoho--Stark uncertainty principle (Proposition \ref{prop:strongDonohoStarkuncertainty}) for the Fourier transform.

\section{Preliminaries}\label{sec:prelim}

Here we recall some basics of subfactor theory \cite{GoHaJo1989}, \cite{JoSu1997}, \cite{Ko1998}. We shall focus on inclusion of infinite factors, mainly type $\III$, with finite or infinite index.

\subsection{The canonical endomorphism}\label{sec:canendo}

Let $\N\subset\M$ be a subfactor acting on a separable Hilbert space $\Hil$. Denote by $\B(\Hil)$ the set of bounded linear operators on $\Hil$. Throughout this paper we shall mainly be interested in irreducible subfactors, \ie $\N' \cap \M = \CC1$. Here $\N'$ is the commutant of $\N$ in $\B(\Hil)$ and $1$ is the identity operator on $\Hil$ sitting in both $\N$ and $\M$. If $\M$ acts standardly on $\Hil$, \ie if it admits a cyclic and separating vector, and if $\N$ and $\M$ are infinite factors, by a result of Dixmier--Mar\'echal \cite{DiMa1971} there are \emph{jointly} cyclic and separating vectors for $\N$ and $\M$ in $\Hil$. We recall below the definition of Longo's \cite{Lo1987} canonical and dual canonical endomorphism:

\begin{defi}
Let $\xi\in\Hil$ be jointly cyclic and separating for $\N$ and $\M$. Denote by $J_{\N,\xi}$, $J_{\M,\xi}$, or simply $J_{\N}$, $J_{\M}$, the respective modular conjugations. Denote by $j_\N := \Ad J_{\N,\xi}$, $j_\M := \Ad J_{\M,\xi}$ the adjoint actions on $\B(\Hil)$. Let 
$$\gamma(x) := j_\N (j_\M(x)), \quad x\in\M.$$ 
Let also $\theta := \gamma_{\restriction\N}$.
\end{defi}

It is easy to see that $\gamma\in\End(\M)$ and $\theta\in\End(\N)$. They depend on the choice of $\xi$ only up to conjugation with a unitary in $\N$, thus their unitary equivalence class is canonical for the subfactor. $\gamma$ is called the \textbf{canonical endomorphism} and $\theta$ the \textbf{dual canonical endomorphism}.

The canonical endomorphism gives a convenient way of describing the Jones tower/tunnel \cite{Jo1983} in the infinite factor setting.
Let $\M_1 := j_\M (\N')$ be the Jones extension of $\M$ given by $\N$. Then $j_\N (j_\M(\M_1)) = \N$. The chosen vector $\xi$ is cyclic and separating for $\M_1$ as well and $J_{\M_1} = J_\M J_\N J_\M$, as $J_\N = J_{\N'}$, hence $J_\M J_{\M_1} = J_\N J_\M$. Setting $\gamma_1(x) := j_\M (j_{\M_1}(x))$ for every $x\in\M_1$, we have that $\gamma_1 \in \End(\M_1)$, $\gamma = {\gamma_1}_{\restriction\M}$, and
$$\theta(\N) \subset \gamma(\M) \subset \gamma_1(\M_1) = \N \subset \M \subset \M_1$$
is the beginning of the Jones tower/tunnel. Moreover, $\theta(x) = j_{\gamma(\M)} (j_{\N}(x))$ for every $x\in\N$.

In the following we shall often distinguish between $\N$ and its embedded image into $\M$.

\begin{defi}
Let $\iota : \N \to \M$ be the inclusion morphism of $\N$ into $\M$. Denote by $\iotabar: \M \to \N$ the morphism defined by $\iotabar := \iota^{-1} \gamma$. The definition is well posed since $\gamma(\M)$ is contained in $\iota(\N)$.
\end{defi}

With this notation,
$$\gamma = \iota \iotabar, \quad \theta = \iota^{-1} \circ \gamma \circ \iota = \iotabar \iota.$$

The morphism $\iotabar$ is called a conjugate of the inclusion morphism $\iota$ \cite{Lo1990}, \cite[\Sec 2.2]{Lo2018}. Note that $\iotabar$ is \emph{not} a conjugate in the 2-categorical sense of the conjugate equations \cite{LoRo1997}, \cite{GiLo2019}, unless the index of $\N\subset\M$ (to be defined below) is finite. Note also that if $\N = \M$, then $\iotabar = \iota^{-1}$.

\subsection{Conditional expectations}\label{sec:condexp}

We recall the definition of conditional expectation, see \cite{St1997} and references therein. A conditional expectation from $\M$ onto $\N$ is a linear map $E:\M \to \M$ such that $E(\M) \subset \iota(\N)$ and

\begin{itemize}
\item[$(i)$] $E(1) = 1$. (unitality)
\item[$(ii)$] $E(\M_+) \subset \M_+$, where $\M_+$ is the positive cone of $\M$. (positivity)
\item[$(iii)$] $E(\iota(y) x \iota(z)) = \iota(y) E(x) \iota(z)$ for every $x\in\M$ and $y,z\in\N$. ($\N$-bimodularity)
\end{itemize}

It follows that $E^2 = E$ and $E(\M) = \iota(\N)$. Moreover, $\|E\| = 1$, where $\|E\|$ is the bounded linear operator norm of $E$ on $\M$ as a Banach space, $E$ is *-preserving and completely positive. 

We shall also use the notation $E:\M \to \N \subset \M$ and denote by $\cE(\M,\N)$ the set of normal (continuous in the ultraweak operator topology) faithful ($E(x^*x) = 0$ for $x\in\M$ implies $x=0$) conditional expectations from $\M$ onto $\N$. The following terminology is due to \cite{FiIs1999}.

\begin{defi}
A subfactor $\N\subset\M$ is called \textbf{semidiscrete} if $\cE(\M,\N)$ is not empty.
\end{defi}

If $\M$ is a $\II_1$ factor, then every subfactor is semidiscrete. Our motivation for studying semidiscrete subfactors in the type $\III$ setting is given by the analysis of nets of local observables \cite{LoRe1995}.

Let $E\in \cE(\M,\N)$. Choose a unit vector $\Omega\in\Hil$ which is cyclic and separating for $\M$ and such that the associated state is $E$-invariant, \ie $(\Omega,x\Omega) = (\Omega,E(x)\Omega)$ for every $x\in\M$. The Jones projection, defined by $e_\N x \Omega := E(x) \Omega$ for every $x\in\M$, depends only on $E$ and on the positive cone of $\Omega$ \cite[\Lem A]{Ko1989}. If $\Omega$ is chosen in the same positive cone with respect to $\M$ of the jointly cyclic and separating vector $\xi$, then $J_{\M,\Omega} = J_{\M,\xi}$ and 
$$\M_1 = j_\M (\N') = \langle \M, e_\N \rangle.$$

We recall the following crucial representation result for conditional expectations in the infinite factor setting \cite[\Prop 5.1]{Lo1989}. Let $\gamma$ and $\theta$ be the canonical and dual canonical endomorphism.

\begin{prop}\label{prop:Estine}
Every $E \in \cE(\M,\N)$ admits a Connes--Stinespring representation in $\Hil$:
$$E = \iota(w)^* \gamma(\slot)\iota(w)$$ 
where $w\in\N$ is an isometry in $\Hom(\id_\N,\theta) := \{y\in\N : y x = \theta(x) y\, \text{ for every } x\in\N\}$, thus $ww^*\in\Hom(\theta,\theta) = \theta(\N)'\cap \N$, and $\gamma_1^{-1}(ww^*) \in \N' \cap \M_1$ is a Jones projection for $E$.
\end{prop}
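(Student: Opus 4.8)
The plan is to read the stated identity as a Connes--Stinespring dilation of the normal u.c.p.\ map $E$ through the canonical endomorphism. Writing $W := \iota(w)$, the claim is $E = W^*\gamma(\slot)W$, and the intertwining condition $wy = \theta(y)w$ for $y\in\N$ becomes, via $\gamma\iota = \iota\theta$, the relation $W\iota(y) = \gamma(\iota(y))W$. Granting an isometry $w\in\Hom(\id_\N,\theta)$ (so $w^*w=1$), the three axioms of a conditional expectation are then formal: unitality is $W^*W = \iota(w^*w) = 1$; complete positivity holds since $\gamma$ is a $*$-homomorphism and $W^*\gamma(\slot)W$ is a compression of it; and $\N$-bimodularity follows by combining $\gamma(\iota(y)x\iota(z)) = \iota(\theta(y))\gamma(x)\iota(\theta(z))$ with the two forms $w^*\theta(y) = yw^*$ and $\theta(z)w = wz$ of the intertwiner relation. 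The same relations show that $W^*\gamma(\slot)W$ is idempotent with range $\iota(\N)$, hence a conditional expectation. So the entire content is to produce $w$ from $E$ and to identify its range projection $ww^*$.

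First I would transport the Jones projection $e_\N\in\N'\cap\M_1$ of $E$ to the $\N$-side. Recall $\gamma_1 = j_\M j_{\M_1} = \Ad(J_\N J_\M)$ restricts to a normal $*$-isomorphism of $\M_1$ onto $\N$ with $\gamma_1|_\M = \gamma$ and $\gamma_1(\N) = \theta(\N)$; being a $*$-isomorphism it carries relative commutants to relative commutants, so $\gamma_1(\N'\cap\M_1) = \theta(\N)'\cap\N$. Hence $p := \gamma_1(e_\N)$ is a projection in $\theta(\N)'\cap\N = \Hom(\theta,\theta)$, and the last assertion of the statement amounts precisely to producing $w$ with $ww^* = p$, since then $\gamma_1^{-1}(ww^*) = e_\N$ is by definition a Jones projection for $E$.

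The heart of the matter is therefore the construction of an isometry $w\in\Hom(\id_\N,\theta)$ with range projection $p$. Conceptually I would take the minimal Connes--Stinespring dilation $(\pi,\Kil,V)$ of $E$ viewed as a normal u.c.p.\ map $\M\to\B(\Hil)$; since $E$ is a conditional expectation, $\iota(\N)$ lies in its multiplicative domain, forcing $V\iota(y) = \pi(\iota(y))V$. One then identifies this minimal dilation with the canonical endomorphism, \ie $(\pi,\Kil)\cong(\gamma,\Hil)$ (the universal role of $\gamma$ for $\N$-bimodular maps), which turns $V$ into an isometry $\Hil\to\Hil$ intertwining $\iota(y)$ with $\gamma(\iota(y)) = \iota(\theta(y))$; that $V$ lies in $\iota(\N)$ (so that $w := \iota^{-1}(V)\in\Hom(\id_\N,\theta)$) and that its range projection equals $p$ both follow because the dilation is determined by $E$, equivalently by $e_\N$. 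Concretely, the same $w$ can be built by modular theory: using that $\N$ acts in standard form on $e_\N\Hil = \overline{\N\Omega}$ (as $e_\N\in\N'$) and that the relevant vector state is $\theta$-invariant, one defines an isometric intertwiner on the reduced space and recognizes it as an element of $\N$ with the prescribed range projection.

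Once $w$ is in hand, the equality $E = \iota(w)^*\gamma(\slot)\iota(w)$ is immediate: both are normal conditional expectations of $\M$ onto $\iota(\N)$, and by $\gamma_1^{-1}(ww^*) = e_\N$ they have the same Jones projection, which by \cite[\Lem A]{Ko1989} determines the expectation (given the positive cone of $\Omega$). I expect the main obstacle to be exactly this construction of $w$ with range projection $p$: in the infinite-index setting one cannot extract it from the conjugate equations or Frobenius reciprocity, because $\iotabar$ is not a categorical conjugate of $\iota$ (as noted before the statement), so $w$ must be built by hand inside $\N$ and its membership in $\N$ together with the value of $ww^*$ controlled through the standard-form and modular structure on $\overline{\N\Omega}$.
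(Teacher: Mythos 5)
First, note that the paper itself offers no proof of this proposition: it is recalled verbatim from \cite[Prop.\ 5.1]{Lo1989}, and the underlying construction of the isometry resurfaces only later, in the proof of Proposition \ref{prop:FhatisF}, via the operator $v_1$ of \cite[Sec.\ 2.5]{LoRe1995}. Your reduction of the problem is correct and well organized: once an isometry $w\in\Hom(\id_\N,\theta)$ with $ww^*=\gamma_1(e_\N)$ is produced, checking that $\iota(w)^*\gamma(\slot)\iota(w)$ is a normal $\N$-bimodular ucp idempotent onto $\iota(\N)$ is formal, and you rightly isolate the construction of $w$ as the entire content. One caveat on your closing step: irreducibility is \emph{not} assumed in this proposition, so $\cE(\M,\N)$ need not be a singleton, and the appeal to \cite[Lem.\ A]{Ko1989} requires you to first show that the Jones projection of the map $\iota(w)^*\gamma(\slot)\iota(w)$ with respect to $\Omega$ is indeed $e_\N$ (equivalently, that $\omega_\Omega$ is invariant under it); that verification is essentially the same computation as proving $E=\iota(w)^*\gamma(\slot)\iota(w)$ directly from $e_\N x e_\N = E(x)e_\N$, so nothing is saved.

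The genuine gap is that neither of your two routes actually lands $w$ inside $\N$. The minimal-dilation route is circular: that $(\gamma,\Hil)$ is the minimal Connes--Stinespring dilation of $E$ is a \emph{consequence} of the explicit construction, not a universal property one can invoke to identify an abstract dilation with $\gamma$, and membership of the dilating isometry in $\iota(\N)$ does not ``follow because the dilation is determined by $E$.'' Your second route points at the right objects but omits the decisive mechanism. Concretely (following \cite{Lo1989}, \cite[Sec.\ 2.5]{LoRe1995}): the map $v'\colon n\xi\mapsto n\Omega$, $n\in\N$ (after arranging $(\xi,n\xi)=(\Omega,n\Omega)$ on $\N$), extends to an isometry of $\Hil=\overline{\N\xi}$ onto $e_\N\Hil=\overline{\N\Omega}$ which manifestly lies in $\N'$ and satisfies $v'v'^*=e_\N$; conjugating by $J_\M$ gives $v_1:=J_\M v'J_\M\in J_\M\N'J_\M=\M_1$ with $v_1v_1^*=e_\N$ and $v_1\in\Hom(\id_{\M_1},\gamma_1)$; and then $w:=\gamma_1(v_1)=J_\N v'J_\N\in J_\N\N'J_\N=\N$ lies in $\N$ \emph{automatically}, with $w\in\Hom(\id_\N,\theta)$ and $ww^*=\gamma_1(e_\N)$. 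It is this double conjugation by the modular involutions --- not a direct ``recognition'' of an intertwiner on the reduced space as an element of $\N$ --- that simultaneously delivers $w\in\N$, the intertwining relation, and the range projection; and with $v_1$ in hand the identity $E=\iota(w)^*\gamma(\slot)\iota(w)$ follows in one line from $w^*\gamma(x)w=\gamma_1(v_1^*xv_1)$ and $e_\N xe_\N=E(x)e_\N$, with no need for a uniqueness argument.
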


\begin{rmk}
Assuming irreducibility of $\N\subset\M$, namely $\N'\cap\M = \CC1$, then $\cE(\M,\N)$ is either empty or it consists of a single element $E$. 
If the factors are infinite, the isometry $w\in\Hom(\id_\N,\theta)$ associated with $E$ is also unique (up to a phase factor).
\end{rmk}

For every $E\in \cE(\M,\N)$, there is an associated operator-valued weight $E^{-1} : \N' \to \M' \subset \N'$ in the sense of \cite{Ha1979I}, \cite{Ha1979II} (a possibly unbounded analogue of a conditional expectation), characterized by Kosaki \cite{Ko1986} using the spatial derivative \cite{Co1980}. $E^{-1}$ is normal faithful and semifinite. It is basically never unital, unless $\N = \M$. 

\begin{defi}
$E$ is said to have \textbf{finite index} if $E^{-1}$ is finite (bounded and everywhere defined). The subfactor $\N\subset\M$ has finite index if for some (hence for all) $E\in\cE(\M,\N)$, $E^{-1}$ is finite. 
\end{defi}

Its value on the identity operator is denoted by $\Ind(E) := E^{-1}(1)$ \cite[\Thm 2.2]{Ko1986}. Moreover, $\Ind(E) = \lambda 1$ with $\lambda \in [1,\infty]$ and with the same quantization behaviour below the value $4$ as the Jones index with respect to the trace \cite[\Thm 5.4]{Ko1986}, \cite{Jo1983}.

Let $\widehat E := j_\M E^{-1} j_\M : \M_1 \to \M \subset \M_1$ be the operator-valued weight dual to $E$, in general only normal faithful and semifinite. Discreteness \cite{IzLoPo1998}, which is equivalent to Popa's quasi-regularity \cite{Po1999} \eg when $\N$ is type $\II_1$ \cite[\Prop 3.22]{JoPe2019}, amounts to a further regularity condition on $\widehat E$. This regularity condition is always fulfilled when the index is finite.

\begin{defi}
A subfactor $\N\subset\M$ is called \textbf{discrete} if it is semidiscrete and if some (hence all) $E\in\cE(\M,\N)$ are such that the restriction of $\widehat E$ to $\N' \cap \M_1$ is semifinite.
\end{defi}

The following characterization of discreteness is a consequence of \cite[\Prop 5.2, 5.5]{DeGi2018}, \cf \cite[\Prop 2.5]{BiDeGi2021}. We recall it in the special case of irreducible subfactors.

\begin{prop}
Let $\N\subset \M$ be an irreducible semidiscrete subfactor, with $\N$, $\M$ infinite factors. Then $\N\subset \M$ is discrete if and only if there is a family $\{\psi_i\}_i \subset \M$ fulfilling the following two conditions:
\begin{itemize}
\item[$(i)$] $\psi_i^* e_\N \psi_i$ are non-zero mutually orthogonal projections, $\sum_i \psi_i^* e_\N \psi_i = 1$ in the strong operator topology, and $E(\psi_i \psi_i^*) = 1$. (Pimsner--Popa basis condition)
\item[$(ii)$] $\psi_i \in \Hom(\iota,\iota\rho_i) := \{y\in\M : y \iota(x) = \iota (\rho_i(x)) y\, \text{ for every } x\in\N\}$, where $\rho_i\in\End(\N)$. (charged fields condition)
\end{itemize}
\end{prop}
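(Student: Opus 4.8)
The plan is to derive the statement from the definition of discreteness as semifiniteness of the dual operator-valued weight on the relative commutant $P:=\N'\cap\M_1$, and to convert that analytic condition into the algebraic data $(\psi_i,\rho_i)$ via the sector calculus attached to Longo's conjugation. Since $\N\subset\M$ is irreducible, $\cE(\M,\N)$ is either empty or the single element $E$, so the clause ``some (hence all) $E$'' is automatic and I may work with this fixed $E$. First I would examine $\phi:=\widehat E_{\restriction P}$. Because $\widehat E$ is $\M$-bimodular and $P$ commutes with $\N$, for $x\in P$ and $n\in\N$ one has $n\,\widehat E(x)=\widehat E(nx)=\widehat E(xn)=\widehat E(x)\,n$, so $\widehat E(x)\in\N'\cap\M=\CC1$; thus $\phi$ is an ordinary normal faithful weight on $P$ with $\phi(e_\N)=\widehat E(e_\N)=1$. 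Moreover $e_\N$ is a \emph{minimal} projection of $P$: reducing by $e_\N$ gives $e_\N\M_1e_\N=e_\N\M e_\N=E(\M)e_\N\cong\N$, and writing $e_\N x e_\N=n_x e_\N$ with $n_x\in\N$ for $x\in\N'$, the relation $x\in\N'$ forces $n_x\in\N'\cap\N=\CC1$, whence $e_\N Pe_\N=\CC e_\N$.

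Assume now that $\N\subset\M$ is discrete, \ie $\phi$ is semifinite. The structure theory of discrete inclusions, in the irreducible case of \cite[\Prop 5.2]{DeGi2018} (\cf \cite[\Prop 2.5]{BiDeGi2021}), then yields that $P$ is atomic and that the dual canonical endomorphism splits discretely, $\theta=\iotabar\iota\cong\bigoplus_i\rho_i$, into irreducible finite-index endomorphisms $\rho_i\in\End(\N)$, the summand $\id_\N$ occurring exactly once by irreducibility; concretely one obtains isometries $W_i\in\Hom(\rho_i,\theta)\subset\N$ with $W_i^*W_j=\delta_{ij}1$ and $\sum_iW_iW_i^*=1$ strongly. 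Applying Frobenius reciprocity for the conjugate pair $(\iota,\iotabar)$ with (co)evaluation $w\in\Hom(\id_\N,\theta)$ from Proposition~\ref{prop:Estine}, each $W_i$ transports to a charged field $\psi_i\in\M$ satisfying $\psi_i\,\iota(x)=\iota(\rho_i(x))\,\psi_i$ for all $x\in\N$, which is condition $(ii)$ and is the content of \cite[\Prop 5.5]{DeGi2018}. A direct computation using $e_\N\iota(y)=\iota(y)e_\N$ for $y\in\N$ and $e_\N\,\psi_i\psi_i^*\,e_\N=E(\psi_i\psi_i^*)e_\N$ then shows, once the $\psi_i$ are normalized so that $E(\psi_i\psi_i^*)=1$, that $f_i:=\psi_i^*e_\N\psi_i$ are projections in $P$, mutually orthogonal and summing strongly to $1$ as a consequence of $W_i^*W_j=\delta_{ij}1$ and $\sum_iW_iW_i^*=1$; this is condition $(i)$.

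Conversely, suppose a family $\{\psi_i\}_i$ satisfying $(i)$ and $(ii)$ is given. The intertwining relation in $(ii)$ makes each $f_i=\psi_i^*e_\N\psi_i$ commute with $\iota(\N)$, so $f_i\in P$, and by $\M$-bimodularity $\widehat E(f_i)=\psi_i^*\,\widehat E(e_\N)\,\psi_i=\psi_i^*\psi_i$, a finite positive scalar since $f_i\in P$. Hence the finite partial sums $\sum_{i\in F}f_i$ form an increasing net of projections of finite $\phi$-weight converging strongly to $1$, so $\phi$ is semifinite and $\N\subset\M$ is discrete.

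The step I expect to be the main obstacle is the forward passage from semifiniteness of $\phi$ to the \emph{discrete} decomposition of $\theta$ with finite-index summands, in the genuinely infinite-index regime: one must show that the finite-$\widehat E$-weight projections are minimal and exhaust $P$ (atomicity), that each atom corresponds to a finite-index irreducible $\rho_i$, and that the reconstruction and the strong convergence $\sum_i\psi_i^*e_\N\psi_i=1$ persist when the index family $\{i\}$ is infinite and $\widehat E$ is unbounded. This is exactly the analytic content imported from \cite{DeGi2018}, \cite{IzLoPo1998}; irreducibility enters only to force $e_\N$ to be minimal with $\phi(e_\N)=1$ and $\id_\N$ to appear once, which is what turns the general discrete decomposition into the clean orthogonal family of charged fields in $(i)$.
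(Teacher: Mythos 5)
The paper offers no proof of this proposition, only the citation of \cite[Prop.\ 5.2, 5.5]{DeGi2018}; your argument relies on exactly those results for the hard (forward) direction and correctly fills in the routine verifications (minimality of $e_\N$ in $\N'\cap\M_1$, the projection and orthogonality computations for $\psi_i^*e_\N\psi_i$ via $E(\psi_i\psi_j^*)$, and semifiniteness of $\widehat E_{\restriction \N'\cap\M_1}$ from the increasing finite partial sums, using $\psi_i^*\psi_i\in\N'\cap\M=\CC1$ by irreducibility). This is essentially the same derivation the authors intend, so there is nothing to correct.
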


\begin{rmk}
Condition $(i)$ means that $\{\psi_i\}_i$ is a \textbf{Pimsner--Popa basis} for $\N\subset\M$ with respect to the unique expectation $E$ \cite{PiPo1986},  \cite{Po1995a}. The condition $E(\psi_i \psi_i^*) = 1$ is in general not included in the definition of Pimsner--Popa basis. It guarantees the uniqueness of the Pimsner--Popa expansion. Condition $(ii)$ is an intertwining condition. The terminology \textbf{charged field} comes from the analysis of DHR (after Doplicher--Haag--Roberts) superselection sectors in algebraic Quantum Field Theory \cite{DoHaRo1971}, \cite{DoRo1972}.
\end{rmk}

Note that each $\rho_i$ is a subendomorphism of $\theta$, in symbols $\rho_i \prec \theta$, as $w_i := \iotabar(\psi_i^*) w$ is an isometry in $\Hom(\rho_i,\theta) := \{y\in\N : y \rho_i(x) = \theta(x) y\, \text{ for every } x\in\N\}$. Moreover, $\theta = \bigoplus_i \rho_i$, namely $\theta = \sum_i w_i \rho_i(\slot) w_i^*$, as $\sum_i w_iw_i^* = 1$.
The $\rho_i$ in the above proposition can be chosen to be irreducible, namely $\Hom(\rho_i, \rho_i) = \CC 1$, and with finite tensor \Cstar-categorical 
dimension \cite{LoRo1997}.

In fact by \cite[\Sec 3]{IzLoPo1998}, assuming discreteness and irreducibility of $\N\subset\M$, every irreducible subendomorphism $\rho \prec \theta$ has finite dimension $d(\rho)$. The multiplicity $n_\rho$ of $\rho$ in $\theta$ (the number of subendomorphisms in a direct sum decomposition of $\theta$ unitarily equivalent to the same $\rho$) is also finite and bounded above by the square of the dimension, $n_\rho \leq d(\rho)^2$. 

\begin{notation}
Let $\alpha,\beta:\N\to\M$ be two unital *-homomorphisms between the von Neumann algebras $\N$ and $\M$. 
Let $\Hom(\alpha,\beta) := \{y\in\M : y \alpha(x) = \beta(x) y\, \text{ for every } x\in\N\}$ be the vector space of intertwiners between $\alpha$ and $\beta$. We shall also write $H_\rho := \Hom(\iota,\iota\rho)$ for the spaces of charged fields associated with $\rho\prec\theta$.
\end{notation}

Of particular importance in this paper are the hom spaces $\Hom(\gamma,\gamma) = \gamma(\M)' \cap \M$ and $\Hom(\theta,\theta) = \theta(\N)' \cap \N$. They are respectively isomorphic (via the canonical endomorphisms) to $\M' \cap \M_2$ and $\N' \cap \M_1$, where $\N\subset\M\subset\M_1\subset\M_2$ is the beginning of the Jones tower.

\subsection{Braided and local subfactors}\label{sec:braidedlocalsubf}

Being braided is additional structure on a subfactor. The study of this structure is motivated for instance by the applications to algebraic Quantum Field Theory \cite{Lo1992}, \cite{Re1995}, \cite{BcEv1998-I}, \cite{Xu1998}, \cite{EvPi2003}, \cite{CaKaLo2010}, \cite{BiKaLoRe2014}, where the braiding is the DHR braiding \cite{DoHaRo1971}, \cite{FrReSc1989}, \cite{GiRe2018}.

Let $\N\subset\M$ be an irreducible discrete subfactor, with $\N$, $\M$ type $\III$. 
Denote by $\C \subset \End(\N)$ the rigid \Cstar-tensor category with finite direct sums and subobjects generated by the irreducible (hence finite dimensional) components of $\theta$. See \eg \cite{EGNO15} for the notion of tensor category and \cite{BiKaLoRe2014-2}, \cite{GiYu2019}, \cite{BiChEvGiPe2020} for the unitary/\Cstar case. A unitary braiding on $\C$ is a family of unitaries $\{\varepsilon_{\rho,\sigma}\in\Hom(\rho\sigma,\sigma\rho)\}_{\rho,\sigma\in\cC}$ which is natural (it fulfills $\varepsilon_{\rho',\sigma'} s \rho(t) = t \sigma(s) \varepsilon_{\rho,\sigma}$ for every $s\in\Hom(\rho,\rho')$, $t\in\Hom(\sigma,\sigma')$) and compatible with the tensor structure (it fulfills the so-called hexagonal diagrams). We also write $\varepsilon^+_{\rho,\sigma} :=\varepsilon_{\rho,\sigma}$ and $\varepsilon^-_{\rho,\sigma} := \varepsilon_{\sigma,\rho}^\ast$ for the braiding and its opposite.

\begin{defi}
The subfactor $\N\subset\M$ is called \textbf{braided} if $\C$ admits a unitary braiding.
\end{defi}

Locality for discrete subfactors \cite[\Def 2.16]{BiDeGi2021}, in the finite index setting also called \emph{chiral locality} \cite{BcEv1998-I}, \cite{BcEvKa1999} or \emph{commutativity} of the associated Q-system \cite[\Def 4.20]{BiKaLoRe2014-2}, amounts to a relation between the given braiding on $\C$ and the algebraic structure of the subfactor. By \cite[\Lem 2.17]{BiDeGi2021}, locality for discrete subfactors can be formulated as follows:

\begin{defi}
The subfactor $\N\subset\M$ is called \textbf{local} if it is braided (with braiding $\{\varepsilon_{\rho,\sigma}\}_{\rho,\sigma\in\C}$) and if
$$\iota(\varepsilon^{\pm}_{\sigma,\rho}) \psi' \psi = \psi \psi'$$
for every $\psi \in H_\rho$, $\psi' \in H_\sigma$ and $\rho,\sigma\prec\theta$ irreducible. One can equivalently choose $\varepsilon^+_{\sigma,\rho}$ or $\varepsilon^-_{\sigma,\rho}$.
\end{defi}

\subsection{Compact hypergroups and their actions}\label{sec:cpthyp}

In \cite{Bi2016}, \cite{BiDeGi2021}, we associated with an irreducible local discrete subfactor $\N\subset\M$ a canonical compact hypergroup $K(\N\subset\M)$ acting on $\M$ by unital completely positive maps. The fixed point subalgebra $\M^K$ coincides with $\N$ \cite[\Thm 4.11]{Bi2016}, \cite[\Thm 5.7]{BiDeGi2021}. 
When the subfactor has in addition depth 2, the hypergroup turns out to be a classical compact group \cite[\Cor 1.2]{Bi2016}, \cite[\Thm 7.5]{BiDeGi2021}. 
Thus one can say that $K(\N\subset\M)$ describes the subfactor by means of its \lqq generalized gauge symmetries\rqq.

We now recall the definition of abstract compact hypergroup adopted in \cite[\Def 3.2]{BiDeGi2021}. For finite sets it boils down to the purely algebraic notion of finite hypergroup \cite{SuWi2003}, \cite[\Def 2.3]{Bi2016}.

\begin{defi}\label{def:abstractcpthyp}
Let $K$ be a compact Hausdorff space. Denote by $P(K)$ the convex space of probability Radon measures on $K$, by $C(K)$ the algebra of continuous functions on $K$ and by $\delta_x$ the normalized Dirac measure concentrated in $x$. $K$ is called a \textbf{compact hypergroup} if it is equipped with a biaffine operation, called convolution:
$$P(K)\times P(K)\to P(K), \quad (\mu,\nu) \mapsto \mu\ast\nu,$$ 
with an involution $K \to K, x \mapsto x^\sharp$, and with an identity element $e\in K$ fulfilling the following:
\begin{itemize}
\item[$(i)$] $P(K)$ is a monoid with involution with respect to $\ast, \,^\sharp, \delta_e$, where the involution is defined on probability measures by $\mu^\sharp(E) := \mu(E^\sharp)$ for every Borel set $E\subset K$.
\item[$(ii)$] The involution $x \mapsto x^\sharp$ is continuous and the map:
$$(x,y)\in K\times K \mapsto \delta_x\ast\delta_y \in P(K)$$
is jointly continuous with respect to the weak* topology on measures.
\item[$(iii)$] There exists a (unique) faithful probability measure $\mu_K$, called a \textbf{Haar measure} on $K$,
such that for every $f,g\in C(K)$ and $y\in K$ it holds
\begin{align}
\int_Kf(y\ast x) g(x)\dd\mu_K(x) &= \int_Kf(x)g(y^\sharp \ast x) \dd\mu_K(x), \\
\int_Kf(x\ast y) g(x)\dd\mu_K(x) &= \int_Kf(x)g(x \ast y^\sharp) \dd\mu_K(x),
\end{align}
where
\begin{align}
f(x\ast y ) := (\delta_x\ast\delta_y)(f) = \int_Kf(z)\dd(\delta_x \ast \delta_y)(z).
\end{align}
\end{itemize}
\end{defi}

\begin{rmk}
A compact hypergroup in the sense of the previous definition is also a locally compact hypergroup in the sense of \cite[\Def 2.1]{KaPoCh2010} with $K$ compact. When $K$ is metrizable, it is a compact quantum hypergroup in the sense of \cite[\Def 4.1]{ChVa1999} with $C(K)$ commutative.
Furthermore, this definition sits in between the widely studied notions of DJS hypergroup (after Dunkl--Jewett--Spector) \cite{BlHe1995} and of hypercomplex system \cite{BeKa1998}.
\end{rmk}

The subfactor theoretical hypergroup \cite[\Def 4.48, \Thm 4.51]{BiDeGi2021} is defined as follows:

\begin{defi}\label{def:subfcpthyp}
Let $\N\subset\M$ be an irreducible local discrete type $\III$ subfactor. Let $\UCP_\N(\M)$ be the convex set of $\N$-bimodular (automatically normal and faithful) unital completely positive maps $\M\to\M$. The subfactor theoretical hypergroup as a set is defined by
$$K(\N\subset\M) := \Extr({\UCP}_{\N}(\M))$$
where $\Extr$ denotes the subset of extreme points.
\end{defi}

The convolution in $K(\N\subset\M)$ corresponds to the composition of ucp maps. 
The involution is given by a notion of adjoint of $\phi\in\UCP_\N(\M)$ with respect to an $E$-invariant state on $\M$:

\begin{defi}\label{def:Omegaadj}
Let $\Omega\in \Hil$ be a cyclic and separating unit vector for $\M$ such that the associated state $\omega = (\Omega , \slot \Omega)$ on $\M$ is $E$-invariant. The \textbf{$\Omega$-adjoint} of $\phi$, denoted by $\phi^\sharp$, is the unique (when it exists) ucp map on $\M$ such that
\begin{align}\label{eq:Omegaadj}
(x \Omega, \phi(y) \Omega) = (\phi^\sharp(x) \Omega, y \Omega)
\end{align}
for every $x, y\in\M$. 
\end{defi}

\begin{lem}\label{lem:uniqueinvolutionlocal}
The involution $\phi \mapsto \phi^\sharp$ does not depend on the choice of $\omega$.
\end{lem}

\begin{proof}
It follows from \cite[\Prop 3.8]{BiDeGi2021}.
\end{proof}

\begin{rmk}
The $\Omega$-adjointability of $\phi\in\UCP_\N(\M)$, \ie the existence of $\phi^\sharp$, is guaranteed by the discreteness and locality of $\N\subset\M$, see \cite[\Lem 4.22]{BiDeGi2021} and \cf also Remark \ref{rmk:Vinrelcomm}.
The independence of $\phi^\sharp$ on $\omega$ for $\N$-bimodular ucp maps can also be checked directly, in full generality and without using the uniqueness of the hypergroup theoretical involution. 
\end{rmk}

The identity in $K(\N\subset\M)$ is the trivial automorphism $\id_\M$ of $\M$. The Haar measure on $K(\N\subset\M)$, denoted by $\mu_E$, corresponds to the unique normal faithful conditional expectation $E$ in $\cE(\M,\N)$.
\smallskip

We also recall the definition of action of an abstract compact hypergroup $K$ on a von Neumann algebra $\M$ given in \cite[\Def 5.1]{BiDeGi2021}. Let $\Omega$ be a cyclic and separating unit vector for $\M$.

\begin{defi}
Let $K$ be a compact hypergroup. Denote by $\UCP^\sharp(\M,\Omega)$ the set of $\Omega$-adjointable ucp maps on $\M$, \cf \cite[\Def 2.5]{BiDeGi2021}. 
An \textbf{action} of $K$ on $\M$ by $\Omega$-adjointable ucp maps is a continuous map:
\begin{align}
\alpha:K\to \Extr({\UCP}^{\sharp}(\M,\Omega))
\end{align}
where ${\UCP}^{\sharp}(\M,\Omega)$ is equipped with the pointwise weak operator topology, such that the lift to probability Radon measures
$\tilde\alpha : P(K) \to {\UCP}^{\sharp}(\M,\Omega)$, defined by
\begin{align}\label{eq:liftaction}
(\tilde{\alpha}(\mu))(x) := \int_K (\alpha(k))(x) \dd\mu(k), \quad \mu\in P(K), x\in\M
\end{align}
where the integral is in the weak sense, is an involutive monoid homomorphism:
\begin{align}
\tilde{\alpha}(\mu_1) \circ \tilde{\alpha}(\mu_2) = \tilde{\alpha}(\mu_1 \ast \mu_2), \quad \tilde{\alpha}(\mu)^\sharp = \tilde{\alpha}(\mu^\sharp), \quad \tilde{\alpha}(\delta_e) = \id.
\end{align}
 \end{defi}

For compact groups, the previous definition boils down to an ordinary action by automorphisms.

\subsection{Duality theorem and dominated $\UCP$ maps}\label{sec:dualitydominated}

The first nontrivial part of \cite[\Thm 4.51]{BiDeGi2021}, which states that $K(\N\subset\M)$ fulfills the requirements of Definition \ref{def:abstractcpthyp}, is to show that the extreme points are closed (hence compact). This follows as a consequence of the duality theorem \cite[\Thm 4.34]{BiDeGi2021}, which implies in particular that $K(\N\subset\M)$ is homeomorphic to the Gelfand spectrum of a commutative unital separable \Cstar-algebra, denoted by $\Cred(\N\subset\M)$ \cite[\Def 4.19]{BiDeGi2021} \footnote{$\Cred(\N\subset\M)$ is obtained as a norm closure of a *-algebra $\Trig(\N\subset\M)$ defined in the type $\III$ setting \cite{BiDeGi2021} analogously to a corner of the Popa--Shlyakhtenko--Vaes generalized tube *-algebra \cite{PoShVa2018} in the type $\II_1$ setting.} and canonically associated with the subfactor. Thus $K(\N\subset\M)$ is compact metrizable and
\begin{align}\label{eq:KisGelfandK}
\Cred(\N\subset\M) \cong C(K(\N\subset\M)).
\end{align}
From now on we shall denote $K(\N\subset\M)$ simply by $K$.

\begin{notation}
Denote by $\B(K)$ the Borel $\sigma$-algebra in $K$. Let $M(K,\B(K))$ be the vector space of complex bounded Radon measures on $K$. In this notation, the probability measures $P(K)$ considered in the previous section can be denoted by $P(K,\B(K))$. Let $L^\infty(K, \mu_E)$ be the algebra of essentially bounded measurable functions on $K$ with respect to the Haar measure. \footnote{In Section \ref{sec:Fourier}, we shall use the notation $P(K)$, $M(K)$, $L^\infty(K)$ respectively for ${\UCP}_\N(\M)$, ${\Span}_\CC({\UCP}_\N(\M))$ and $\Hom(\gamma,\gamma)$. The reason is explained in the remainder of this section.}
\end{notation}

Recall the duality theorem \cite[\Thm 4.34]{BiDeGi2021}, which is the main technical result in \cite{BiDeGi2021} and from which \eqref{eq:KisGelfandK} follows by restricting to the extreme points:

\begin{thm}\label{thm:dualityishomeo}
Let $\N\subset \M$ be an irreducible local discrete type $\III$ subfactor. There is an affine homeomorphism denoted by
\begin{align}
\phi \mapsto \mu_\phi
\end{align}
between $\UCP_\N(\M)$ equipped with the pointwise weak operator topology and the state space of $\Cred(\N\subset \M)$, denoted by $\cS(\Cred(\N\subset \M))$, equipped with the weak$^*$ topology.
\end{thm}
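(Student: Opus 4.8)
The plan is to realize both sides concretely through the charged field and Pimsner--Popa expansion, and then to match positivity block by block. First I would record the structural fact that any $\phi\in\UCP_\N(\M)$ is determined by its action on charged fields: since $\phi$ is $\N$-bimodular, for $\psi\in H_\rho$ one has $\phi(\iota(x)\psi)=\iota(x)\phi(\psi)$ and $\phi(\psi\,\iota(y))=\phi(\iota(\rho(y))\psi)=\iota(\rho(y))\phi(\psi)$, so $\phi(H_\rho)\subset H_\rho$ for every irreducible $\rho\prec\theta$. Because $\theta=\bigoplus_i\rho_i$ with each isomorphism class occurring with finite multiplicity $n_\rho\le d(\rho)^2$, the charged fields of a fixed class span a finite dimensional space, and $\phi$ restricts to a linear endomorphism $\phi_\rho$ of that multiplicity space. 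Using a Pimsner--Popa basis $\{\psi_i\}_i$ and the resulting expansion $x=\sum_i\psi_i\,\iota(E(\psi_i^*x))$, valid for all $x\in\M$ by the basis condition and $E(\psi_i\psi_i^*)=1$, the family $(\phi_\rho)_\rho$ recovers $\phi$ on all of $\M$. Thus $\UCP_\N(\M)$ embeds into a product of finite dimensional matrix spaces, with complete positivity and unitality cutting out the image.

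Next I would make precise the role of $\Trig(\N\subset\M)$ as the algebra of matrix coefficients of these multiplicity spaces, with the product and $*$ supplied by the tube $*$-algebra corner referred to in the footnote. The correspondence is then given by pairing a ucp map against such a coefficient: writing $a\in\Trig(\N\subset\M)$ as a combination of coefficients labelled by $(\rho;i,j)$, I set $\mu_\phi(a)$ to be the matching combination of the numbers obtained from $E(\psi_j^*\,\phi(\psi_i))$ (evaluated against an $E$-invariant vector state). This is linear in $a$ and affine in $\phi$. Unitality $\phi(1)=1$ fixes the normalization $\mu_\phi(1)=1$, and positivity of $\mu_\phi$ on $a^*a$ should reduce, through the explicit product on $\Trig(\N\subset\M)$, to the defining positivity of the completely positive map $\phi$ applied to a positive matrix assembled from the charged fields occurring in $a$. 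Hence $\mu_\phi$ extends to a state on the norm closure $\Cred(\N\subset\M)$.

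Injectivity is then immediate: if $\mu_\phi=\mu_{\phi'}$ the two functionals agree on all matrix coefficients, so $\phi$ and $\phi'$ coincide on every $H_\rho$, hence on $\M$ by the expansion. For surjectivity I would run the construction backwards: given $\mu\in\cS(\Cred(\N\subset\M))$, read off from its values on matrix coefficients a candidate family $(\phi_\rho)_\rho$, assemble a normal $\N$-bimodular map $\phi$ on $\M$ via the Pimsner--Popa expansion, and verify complete positivity. This verification is the crux, and I expect it to be the main obstacle: one must show that the abstract positivity $\mu(a^*a)\ge 0$ forces the matrices defining $\phi$ to obey the Stinespring-type positivity characterizing ucp maps. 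Here locality cannot be bypassed, since it is precisely the relation $\iota(\varepsilon^{\pm}_{\sigma,\rho})\,\psi'\psi=\psi\psi'$ that makes $\Trig(\N\subset\M)$ commutative and renders its product compatible with composition of ucp maps, so that the dictionary ``states $\leftrightarrow$ completely positive maps'' closes up and identifies $\Cred(\N\subset\M)$ with a commutative algebra whose Gelfand spectrum will be $K(\N\subset\M)$.

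Finally, for the topological assertion I would check continuity of $\phi\mapsto\mu_\phi$: each generator $a\in\Trig(\N\subset\M)$ pairs with $\phi$ through finitely many expressions of the form $E(\psi_j^*\,\phi(\psi_i))$ followed by a vector state, and these are continuous from the pointwise weak operator topology on $\UCP_\N(\M)$ to the weak$^*$ topology on $\cS(\Cred(\N\subset\M))$. Since $\Trig(\N\subset\M)$ is norm dense in $\Cred(\N\subset\M)$ and states are uniformly bounded, continuity on generators suffices. The map is therefore a continuous affine bijection; as $\UCP_\N(\M)$ is a closed subset of a product of weak-operator-compact operator balls, it is compact by Tychonoff, and the target state space is Hausdorff, so a continuous bijection out of a compact space is automatically a homeomorphism. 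This yields the affine homeomorphism $\phi\mapsto\mu_\phi$, and restricting to extreme points recovers the identification $\Cred(\N\subset\M)\cong C(K(\N\subset\M))$.
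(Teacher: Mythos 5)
First, a point of comparison: this paper contains no proof of Theorem \ref{thm:dualityishomeo}. It is quoted verbatim from the companion paper (Thm.\ 4.34 of \cite{BiDeGi2021}), and the authors explicitly state that they will not even need the definitions of $\mu_\phi$ or of $\Cred(\N\subset\M)$ here. So your proposal can only be measured against the strategy of that source, not against anything in the present text. At the level of architecture your sketch is the right one: $\N$-bimodularity does force $\phi(H_\rho)\subset H_\rho$, the multiplicity spaces are finite dimensional, the pairing is built from quantities of the form $E(\psi_j^*\,\phi(\psi_i))$, and the final topological upgrade (continuous affine bijection from a compact space to a Hausdorff space) is sound, modulo the fact that compactness of $\UCP_\N(\M)$ in the pointwise weak operator topology requires knowing that such limits remain normal --- which itself comes from the discrete structure, not from Tychonoff alone.

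The genuine gaps are the two steps you label as ``should reduce to'' and ``the crux''; unfortunately they are exactly where the content of the theorem lives. (a) Even granting positivity of $\mu_\phi$ on $\Trig(\N\subset\M)$, a positive unital functional on a dense *-subalgebra of a \Cstar-algebra need \emph{not} be bounded for the \Cstar-norm, so the sentence ``hence $\mu_\phi$ extends to a state on the norm closure'' does not follow; one must prove $|\mu_\phi(a)|\le\|a\|_{\Cred(\N\subset\M)}$, and this is precisely why the \emph{reduced} norm appears in the definition. In \cite{BiDeGi2021} this is handled not by abstract positivity but by realizing $\mu_\phi$ through the contraction $V_\phi\in\N'\cap\M_1$ (Section \ref{sec:FourierUCP} of the present paper), whose existence rests on $\Omega$-adjointability of $\phi$ --- itself a consequence of locality via $a_\rho=1_{H_\rho}$ --- so the estimate comes from $\|V_\phi\|\le 1$. (b) For surjectivity you must manufacture, from an abstract state on $\Cred(\N\subset\M)$, a map that is simultaneously completely positive, normal, unital, and lands in $\M$ (not merely in $\B(\Hil)$); you name this as the main obstacle but propose no mechanism for converting positivity of $\mu$ on elements $a^*a$ of the tube-algebra corner into Stinespring positivity of matrices $[\phi(x_i^*x_j)]$ over all of $\M$, nor for the convergence of $\sum_i\phi(\psi_i)\iota(E(\psi_i^*x))$ to a well-defined bounded map. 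As it stands the proposal is a correct plan of attack with the two load-bearing lemmas left unproved.
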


We shall not need in this paper the exact definition of $\mu_\phi$, nor of $\Cred(\N\subset \M)$. We will need instead the following two propositions proven in \cite[\Prop 4.42]{BiDeGi2021} and \cite[\Prop 4.44]{BiDeGi2021}:

\begin{prop}\label{prop:SpanUCP}
The map $\phi\mapsto \mu_\phi$ extends to a linear bijection:
$${\Span}_{\CC}({\UCP}_{\N}(\M))\to (\Cred(\N\subset \M))^*$$ 
onto the continuous dual of $\Cred(\N\subset \M)$.
\end{prop}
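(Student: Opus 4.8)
The plan is to upgrade the affine homeomorphism $T\colon\phi\mapsto\mu_\phi$ of Theorem \ref{thm:dualityishomeo}, which already identifies $\UCP_\N(\M)$ with the full state space $\cS(\Cred(\N\subset\M))$, to a complex-linear bijection of the respective complex spans. The only additional ingredients are the unitality of the maps in $\UCP_\N(\M)$ and the elementary fact that the states of a unital \Cstar-algebra span its dual. Throughout, $\Span_\CC(\UCP_\N(\M))$ is viewed concretely as a subspace of the complex linear maps on $\M$, so that two formal combinations represent the same element precisely when they agree as maps.

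First I would define the extension on the positive cone by $\tilde T\big(\sum_j a_j\phi_j\big):=\sum_j a_j\mu_{\phi_j}$ for $a_j\ge 0$. Well-definedness here is the crux, and it is exactly where unitality enters. If $\sum_j a_j\phi_j=\sum_k b_k\psi_k$ as maps on $\M$, then evaluating at $1\in\M$ and using $\phi_j(1)=\psi_k(1)=1$ forces the total masses to coincide, $s:=\sum_j a_j=\sum_k b_k$; for $s>0$ one divides by $s$ to obtain an identity of genuine convex combinations in $\UCP_\N(\M)$, to which the affine map $T$ applies, giving $\sum_j a_j\mu_{\phi_j}=\sum_k b_k\mu_{\psi_k}$ (the case $s=0$ being trivial). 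I would stress that mere affineness of $T$ does not suffice: an affine map on a convex set extends linearly to its span only in the presence of such a homogeneity constraint, and here it is supplied precisely by $\phi(1)=1$.

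Next I would pass to $\Span_\RR(\UCP_\N(\M))$ by writing each real combination as a difference $p-n$ of cone elements and setting $\tilde T(p-n):=\tilde T(p)-\tilde T(n)$; additivity on the cone makes this independent of the decomposition and produces a real-linear map. Since every $\phi\in\UCP_\N(\M)$ is $\ast$-preserving, $\Span_\RR(\UCP_\N(\M))$ consists of Hermitian maps, whereas $\ima$ times such a map is anti-Hermitian; hence $\Span_\CC(\UCP_\N(\M))=\Span_\RR(\UCP_\N(\M))\oplus \ima\,\Span_\RR(\UCP_\N(\M))$, and I extend $\tilde T$ complex-linearly across this splitting.

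Finally I would verify bijectivity. For surjectivity, the image of $\tilde T$ is a complex subspace containing every state $\mu_\phi$, hence all of $\Span_\CC(\cS(\Cred(\N\subset\M)))$, which equals $(\Cred(\N\subset\M))^*$ since every bounded functional on a unital \Cstar-algebra is a complex combination of states. For injectivity, $\tilde T$ sends real combinations to Hermitian functionals, so $\tilde T(x_1+\ima x_2)=0$ splits into $\tilde T(x_1)=\tilde T(x_2)=0$; and on $\Span_\RR$, if $\tilde T(p)=\tilde T(n)$ then the two positive functionals have equal total mass $s$, and for $s>0$ the injectivity of $T$ on convex combinations forces $p=n$ (while $s=0$ forces $p=n=0$). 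The main obstacle is the well-definedness on the cone, the decisive point being that unitality, rather than affineness alone, is what licenses the linear extension.
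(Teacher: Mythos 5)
Your argument is correct. Note that the paper does not actually prove this proposition: it is quoted verbatim from \cite[Prop.\ 4.42]{BiDeGi2021}, where $\mu_\phi$ is constructed by an explicit formula on the dense $*$-algebra $\Trig(\N\subset\M)$, so that linearity of the extension is essentially built into the definition and the work goes into identifying the range. Your route is different and self-contained: you take Theorem \ref{thm:dualityishomeo} as a black box and upgrade the affine bijection onto $\cS(\Cred(\N\subset\M))$ to a linear bijection of spans, using exactly the right ingredients --- unitality to pin down the coefficient sum (equivalently, that $0$ does not lie in the affine hull of $\UCP_\N(\M)$, which is what makes the cone extension well defined), the $*$-preserving property to split $\Span_\CC = \Span_\RR \oplus \ima\,\Span_\RR$ into Hermitian and anti-Hermitian parts on both sides, and the Jordan decomposition of bounded functionals on a unital \Cstar-algebra for surjectivity. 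The injectivity step via equal total masses and injectivity of $T$ on convex combinations is also sound. What your approach buys is independence from the internal construction of $\mu_\phi$; what it loses is the explicit description of $\mu_\phi$ on trigonometric elements, which the paper needs later anyway (e.g.\ in Section \ref{sec:classicalF}).
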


By the Riesz--Markov theorem, \begin{align}\label{eq:PsPandMisM}
{\UCP}_\N(\M) \cong P(K,\B(K)), \quad {\Span}_\CC({\UCP}_\N(\M)) \cong M(K,\B(K)).
\end{align}

As we recall in more detail in Section \ref{sec:Fourier}, there is a Radon--Nikodym theorem for completely positive maps \cite{Ar1969}, see also \cite{Pa1973}, \cite{Bi2016}. It implies that every $\phi \in \UCP_\N(\M)$ dominated by $E$ (in the sense that $d E - \phi$ is completely positive for some $d>0$) is of the form
\begin{align}\label{eq:RNphi}
\phi = \iota(w)^* x \gamma(\slot) \iota(w)
\end{align}
for some positive operator $x\in\Hom(\gamma,\gamma) = \gamma(\M)' \cap \M$. Recall that $E = \iota(w)^* \gamma(\slot)\iota(w)$ by Proposition \ref{prop:Estine}.

Thus $\Hom(\gamma,\gamma)$ can be viewed as the algebra of bounded densities associated with $\N$-bimodular ucp maps $\M\to\M$ which are dominated by $E$. The following proposition states that $\Hom(\gamma,\gamma)$ is identified with the von Neumann algebra $L^\infty(K, \mu_E)$ via \eqref{eq:PsPandMisM} and \eqref{eq:RNphi}.

\begin{prop}\label{prop:gammagammaisLinfty}
Let $f \mapsto x_f$ be the map defined on positive functions $f\in L^\infty(K,\mu_E)$ such that $\int_K f \dd\mu_E = 1$ by considering the unique positive operator $x_f\in\Hom(\gamma,\gamma)$ such that $\iota(w)^* x_f \iota(w) = 1$ and 
$$f \dd\mu_E = \mu_{\phi_{x_f}}$$ 
where $\phi_{x_f} := \iota(w)^* x_f \gamma(\slot) \iota(w)$. 

Then $f \mapsto x_f$ extends to a normal *-isomorphism from $L^\infty(K,\mu_E)$ onto $\Hom(\gamma,\gamma)$. In particular, $\Hom(\gamma,\gamma)$ is a commutative von Neumann algebra, isometrically isomorphic to $L^\infty(K,\mu_E)$.

Under this identification, 
$$\mu_E(g) = \int_K g \dd\mu_E = E(x_g)$$
for every $g\in L^\infty(K,\mu_E)$.
\end{prop}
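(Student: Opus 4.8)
The plan is to realize $f\mapsto x_f$ as the linear extension of an order isomorphism between positive cones, and then to upgrade it to a normal $*$-isomorphism by a rigidity argument. First I would assemble three bijections that are already available. By the Radon--Nikodym theorem for completely positive maps (Arveson), the positive elements $x\in\Hom(\gamma,\gamma)$ are in bijection with the completely positive maps $\phi_x=\iota(w)^* x\gamma(\slot)\iota(w)$ dominated by $E$ (up to scaling, since $x\le\|x\|1$ forces $\|x\|E-\phi_x=\iota(w)^*(\|x\|1-x)\gamma(\slot)\iota(w)$ to be completely positive), and this correspondence is affine and order preserving. By the duality theorem \rthm{dualityishomeo} together with \eqref{eq:PsPandMisM}, these dominated maps correspond to the positive Radon measures on $K$ dominated by the Haar functional $\mu_E$. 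The key point here, which I would extract from the affineness of $\phi\mapsto\mu_\phi$, is that \emph{domination by $E$ translates exactly into absolute continuity with bounded density}: $\phi_x\le d\,E$ iff $\mu_{\phi_x}\le d\,\mu_E$. Finally, the ordinary Lebesgue--Radon--Nikodym theorem identifies the measures $\le d\,\mu_E$ with the functions $f\in L^\infty(K,\mu_E)$, $0\le f\le d$, via $\nu=f\,\dd\mu_E$. Composing, $x\mapsto f$ is an order isomorphism of positive cones; since it, $\phi\mapsto\mu_\phi$ and $f\mapsto f\,\dd\mu_E$ are all affine, it is additive and positively homogeneous, so (using the complexification of \rprop{SpanUCP}) it extends to a \emph{unital} positive linear bijection $T\colon L^\infty(K,\mu_E)\to\Hom(\gamma,\gamma)$ with positive inverse; unitality is the observation that $f=1$ corresponds to the measure $\mu_E$, hence to $E$, hence to $x=1$.

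The substantive point is to pass from a linear order isomorphism to a $*$-algebra isomorphism, and this is where I expect the main obstacle to lie, since \emph{a priori} nothing forces $\Hom(\gamma,\gamma)$ to be commutative or the operator product to match the pointwise product of functions. I would resolve this by rigidity rather than by computing products directly. A unital positive linear map between \Cstar-algebras satisfies $\|T\|=\|T(1)\|=1$ (Russo--Dye); applying this to both $T$ and $T^{-1}$ gives $\|Tf\|=\|f\|$, so $T$ is a unital surjective linear isometry. By Kadison's theorem on isometries of operator algebras, $T$ is then a Jordan $*$-isomorphism. Since the source $L^\infty(K,\mu_E)$ is commutative, its Jordan product coincides with the ordinary product, and by the standard structure of Jordan $*$-homomorphisms (a direct sum of a $*$-homomorphism and a $*$-anti-homomorphism, both with commutative range when the source is commutative) $T$ is in fact a genuine $*$-isomorphism; in particular $\Hom(\gamma,\gamma)$ is commutative and $T(fg)=T(f)T(g)$. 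Finally, a $*$-isomorphism of von Neumann algebras is automatically normal, being an order isomorphism and hence preserving suprema of bounded increasing nets, and it is isometric; this delivers the asserted normal isometric $*$-isomorphism $L^\infty(K,\mu_E)\cong\Hom(\gamma,\gamma)$ and, as a by-product, the commutativity of $\Hom(\gamma,\gamma)$.

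For the concluding normalization identity I would pair the defining relation $f\,\dd\mu_E=\mu_{\phi_{x_f}}$ with the unit of $\Cred(\N\subset \M)$ (corresponding to the constant function $1\in C(K)$ under \eqref{eq:KisGelfandK}): this equates the total mass $\int_K f\,\dd\mu_E$ with the value $\mu_{\phi_{x_f}}(1)$, which by the construction of $\mu_\phi$ in \cite{BiDeGi2021} is $E(x_f)$; extending linearly along $T$ yields $\int_K g\,\dd\mu_E=E(x_g)$ for all $g\in L^\infty(K,\mu_E)$. The only genuinely delicate step in this whole scheme is the matching of domination by $E$ with absolute continuity with respect to $\mu_E$ in the first paragraph, together with the resulting surjectivity of $T$ onto all of $\Hom(\gamma,\gamma)$; once the unital order isomorphism is in hand, the isometry and hence the multiplicativity are formal, being supplied by Russo--Dye and Kadison's theorem.
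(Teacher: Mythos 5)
The paper does not actually prove this statement here: it is recalled from \cite[\Prop 4.44]{BiDeGi2021}, where the identification is obtained concretely from the Gelfand picture --- commutativity of $\Hom(\gamma,\gamma)$ comes directly out of locality via the trigonometric polynomials $\psi_{\rho,r}^*\bar\psi_{\rho,s}$ and the braiding relations, and the $*$-isomorphism with $L^\infty(K,\mu_E)$ is then read off from $\Cred(\N\subset\M)\cong C(K)$ together with the Haar state. Your argument is a genuinely different and more abstract route to the same result, and I believe it is correct. You splice the Arveson and Lebesgue Radon--Nikodym correspondences through the duality theorem to obtain a unital order isomorphism $T$; the matching of domination by $E$ with absolute continuity does work as you claim, since a completely positive $\N$-bimodular map $\psi$ with $\psi(1)=c1$, $c>0$, is $c$ times an element of $\UCP_\N(\M)$ and one with $\psi(1)=0$ vanishes, so the linear bijection of Proposition \ref{prop:SpanUCP} matches the positive cones on both sides; injectivity of $x\mapsto\phi_x$ is minimality of the Stinespring representation. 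Russo--Dye plus Kadison's theorem then upgrade $T$ to a Jordan $*$-isomorphism, whose commutative source forces the range to be commutative and the map to be multiplicative. What your route buys is independence from the charged-field/Trig machinery --- it works whenever the duality theorem and the two Radon--Nikodym theorems are available; what it hides is where locality actually enters, namely only through the commutativity of $C(K)$ already encoded in Theorem \ref{thm:dualityishomeo}, whereas the original proof makes that mechanism explicit and produces the concrete dense subalgebra of trigonometric polynomials used later in the paper.

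Two small points deserve tightening. First, surjectivity of $T$ requires that every nonzero positive $x\in\Hom(\gamma,\gamma)$ satisfy $w^*xw>0$, so that it can be normalized into the domain of the cone bijection; this follows from totality of the vectors $\gamma(m)w\xi$ (minimality of the Stinespring dilation again). Second, your derivation of the identity $\mu_E(g)=E(x_g)$ defers to ``the construction of $\mu_\phi$'' in the reference; it is cleaner to observe that both sides equal $1$ on normalized positive $f$ --- the left by hypothesis, the right because the restriction of $E$ to $\Hom(\gamma,\gamma)$ is the functional $x\mapsto w^*xw$ (Lemma \ref{lem:Estate}, whose proof is independent of this proposition) and $w^*x_fw=1$ by definition --- and then extend by linearity.
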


\section{$\alpha$-induction for discrete subfactors}\label{sec:alphaind}

The operations of $\alpha$-induction and $\sigma$-restriction have been introduced in \cite{LoRe1995} and further studied in \cite{BcEv1998-I}, \cite{BcEv1999-II}, \cite{BcEv1999-III}, \cite{BcEvKa1999}, \cite{BcEvKa2000}, \cite{CoDoRo2001}. The idea comes from Roberts' cohomological description of superselection sectors in algebraic Quantum Field Theory \cite{Ro1976}, \cite{Ro1990}. If $\N\subset\M$ is a braided type $\III$ subfactor with finite index, $\alpha$-induction and $\sigma$-restriction provide a way of defining endomorphisms of $\M$ starting from endomorphisms of $\N$ and vice versa. 
We refer to \cite[\Sec 3.3]{BcEvKa1999} for the definitions in the finite index subfactor context. In the possibly infinite index setting, they have been studied in \cite{Xu2005} in the context of subfactors coming from strongly additive pairs of conformal nets, and used in \cite{CaCo2001}, \cite{CaCo2005}, \cite{CaHiKaLoXu15} to investigate structural properties of inclusions of nets of local observables. We refer to \cite[\Sec 2]{CaCo2001Siena} for the definition of $\alpha$-induction using cocycles.

In this section, we define $\alpha$-induction and $\sigma$-restriction for braided discrete subfactors and study their properties and mutual relations in the local case.

\begin{defi}
Let $\N\subset\M$ be a braided discrete type $\III$ subfactor. Let $\C \subset \End(\N)$ and $\{\varepsilon^{\pm}_{\rho,\sigma}\}_{\rho,\sigma\in\C}$ be as in Section \ref{sec:braidedlocalsubf}.
For every $\rho\in\C$, define its \textbf{$\alpha$-induction} by
$$\alpha^{\pm,\,\N\subset\M}_\rho := {\iotabar}^{\,-1} \circ \Ad \varepsilon^{\pm}_{\rho,\theta}\circ \rho\circ \iotabar$$
where $\iotabar$ is the conjugate of the inclusion, $\theta$ is the dual canonical endomorphism and $\varepsilon^{\pm}_{\rho,\theta}$ is defined by $\varepsilon^{\pm}_{\rho,\theta} := \sum_i w_i \varepsilon^{\pm}_{\rho,\rho_i} \rho(w_i)^*$. The sum converges in the strong operator topology 
and $\theta = \sum_i w_i \rho_i(\slot)w_i^*$ is a direct sum decomposition of $\theta$ into irreducible subendomorphisms $\rho_i\in\C$ with $w_i$ isometries.

For every $\rho\in\End(\M)$, define its \textbf{$\sigma$-restriction} by
$$\sigma^{\N\subset\M}_\rho := \iotabar \circ \rho \circ \iota.$$
We shall omit the apices $\N\subset\M$ and simply write $\alpha^{\pm}_\rho$ and $\sigma_\rho$, when no confusion arises. 
\end{defi}

Clearly $\sigma_\rho \in \End(\N)$ and $t\in\Hom(\rho_1,\rho_2)$, $\rho_1,\rho_2\in\End(\M)$, implies $\iotabar(t)\in\Hom(\sigma_{\rho_1},\sigma_{\rho_2})$.
The following properties of $\alpha$-induction are well known in the finite index case \cite{BcEv1998-I}, \cite{BcEvKa1999}. First, note that $\Ad \varepsilon^{\pm}_{\rho,\theta}\circ \rho\circ \iotabar (\M) \subset \iotabar(\M)$, see \cite[\Lem 7.3]{DeFiRo2021}, hence $\alpha^{\pm}_\rho$ are well defined and $\alpha^{\pm}_\rho \in \End(\M)$.

\begin{lem}
Let $\rho\in\C$, then
\begin{itemize}
\item[$(1)$] $\alpha^{\pm}_\rho$ both extend $\rho$, namely $\alpha^{\pm}_\rho \iota = \iota \rho$.
\item[$(2)$] $\rho \mapsto \alpha^{\pm}_\rho$ is functorial, namely $t\in\Hom(\rho_1,\rho_2)$, $\rho_1,\rho_2\in\C$, implies $\iota(t)\in\Hom(\alpha^{\pm}_{\rho_1},\alpha^{\pm}_{\rho_2})$.
\end{itemize}
\end{lem}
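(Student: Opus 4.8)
The plan is to reduce all three statements to two inputs: the intertwining and naturality properties of the extended braiding $\varepsilon^{\pm}_{\rho,\theta}$, and the injectivity of $\iotabar$. Writing $\beta^{\pm} := \Ad\varepsilon^{\pm}_{\rho,\theta}\circ\rho$, so that $\alpha^{\pm}_\rho = \iotabar^{-1}\circ\beta^{\pm}\circ\iotabar$, I first note that $\gamma=\iota\iotabar$ is an endomorphism of the factor $\M$ and hence injective, so $\iotabar = \iota^{-1}\gamma$ is an injective normal unital $*$-homomorphism $\M\to\N$ and $\iotabar^{-1}$ is a well-defined normal $*$-homomorphism on its image $\iotabar(\M)$. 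Before treating the items I would record the basic features of $\varepsilon^{\pm}_{\rho,\theta}=\sum_i w_i\,\varepsilon^{\pm}_{\rho,\rho_i}\,\rho(w_i)^*$: using $w_i^*w_j=\delta_{ij}$ and $\sum_i w_iw_i^*=1$ one gets the \emph{one-leg relation} $\varepsilon^{\pm}_{\rho,\theta}\,\rho(w_i)=w_i\,\varepsilon^{\pm}_{\rho,\rho_i}$, and from it, together with $w_i\in\Hom(\rho_i,\theta)$ and $\varepsilon^{\pm}_{\rho,\rho_i}\in\Hom(\rho\rho_i,\rho_i\rho)$, that $\varepsilon^{\pm}_{\rho,\theta}$ is a unitary in $\Hom(\rho\theta,\theta\rho)$.

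The key algebraic lemma I would then isolate is the \emph{extended naturality}
$$\varepsilon^{\pm}_{\rho_2,\theta}\,t=\theta(t)\,\varepsilon^{\pm}_{\rho_1,\theta},\qquad t\in\Hom(\rho_1,\rho_2).$$
To prove it I compose both sides on the right with each $\rho_1(w_i)$: the left side becomes $w_i\,\varepsilon^{\pm}_{\rho_2,\rho_i}\,t$ (moving $t$ through $\rho_2(w_i)$ via $t\rho_1(w_i)=\rho_2(w_i)t$ and applying the one-leg relation), the right side becomes $w_i\,\rho_i(t)\,\varepsilon^{\pm}_{\rho_1,\rho_i}$ (using $\theta(t)w_i=w_i\rho_i(t)$ and the one-leg relation), and these agree by the ordinary naturality of the braiding in $\C$ applied to $t$ and $\id_{\rho_i}$. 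Since $\sum_i\rho_1(w_i)\rho_1(w_i)^*=1$, testing against all $\rho_1(w_i)$ pins down the identity.

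Granting these, item $(1)$ is immediate: the containment $\beta^{\pm}(\iotabar(\M))\subset\iotabar(\M)$ is precisely the well-definedness recalled before the lemma, so $\alpha^{\pm}_\rho$ is a composite of normal unital $*$-homomorphisms and hence lies in $\End(\M)$, unitality following from $\iotabar(1)=1$, $\rho(1)=1$ and unitarity of $\varepsilon^{\pm}_{\rho,\theta}$. For $(2)$, since $\iotabar\iota=\theta$ and $\varepsilon^{\pm}_{\rho,\theta}\in\Hom(\rho\theta,\theta\rho)$, I compute for $n\in\N$ that $\beta^{\pm}(\theta(n))=\varepsilon^{\pm}_{\rho,\theta}\,\rho\theta(n)\,(\varepsilon^{\pm}_{\rho,\theta})^*=\theta\rho(n)=\iotabar(\iota\rho(n))$, whence $\alpha^{\pm}_\rho(\iota(n))=\iotabar^{-1}(\iotabar(\iota\rho(n)))=\iota(\rho(n))$. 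For $(3)$, I apply the injective homomorphism $\iotabar$ to the desired $\iota(t)\,\alpha^{\pm}_{\rho_1}(m)=\alpha^{\pm}_{\rho_2}(m)\,\iota(t)$; using $\iotabar(\iota(t))=\theta(t)$ and $\iotabar(\alpha^{\pm}_{\rho_j}(m))=\beta^{\pm}_{\rho_j}(\iotabar(m))$, and setting $n=\iotabar(m)$, it suffices to prove $\theta(t)\beta^{\pm}_{\rho_1}(n)=\beta^{\pm}_{\rho_2}(n)\theta(t)$ in $\N$. Expanding $\beta^{\pm}_{\rho_j}(n)=\varepsilon^{\pm}_{\rho_j,\theta}\rho_j(n)(\varepsilon^{\pm}_{\rho_j,\theta})^*$ and successively applying the extended naturality (to rewrite $\theta(t)\varepsilon^{\pm}_{\rho_1,\theta}=\varepsilon^{\pm}_{\rho_2,\theta}t$), the intertwiner relation $t\rho_1(n)=\rho_2(n)t$, and the companion identity $t(\varepsilon^{\pm}_{\rho_1,\theta})^*=(\varepsilon^{\pm}_{\rho_2,\theta})^*\theta(t)$ (obtained from the same naturality by multiplying with the unitaries), yields the claim.

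The genuinely delicate point, and the step I expect to be the main obstacle, is the rigorous treatment of the extended braiding $\varepsilon^{\pm}_{\rho,\theta}$, which is an infinite strongly convergent sum since $\theta$ is a (generally infinite) direct sum of the $\rho_i$. I would want to justify that the one-leg relation and the naturality square genuinely survive this passage to the limit and that $\Ad\varepsilon^{\pm}_{\rho,\theta}$ is well-behaved on $\iotabar(\M)$; this is where the cited well-definedness input enters. Once the behaviour of $\varepsilon^{\pm}_{\rho,\theta}$ is secured, everything reduces to finite manipulations with intertwiners and the unitarity of the braiding.
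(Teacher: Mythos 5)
Your argument is correct, but it takes a genuinely different route from the paper's for items $(1)$ and $(3)$. The paper exploits discreteness: since $\M$ is generated by $\iota(\N)$ and the charged fields $\psi'\in H_\tau$, it suffices to compute $\alpha^{\pm}_\rho$ on these generators, which yields the formula $\alpha^{\pm}_\rho(\psi') = \iota(\varepsilon^{\mp}_{\tau,\rho})\psi'$; then $(1)$ follows by normality (the generators are sent into $\M$) and $(3)$ by a three-line computation on charged fields using naturality and $\iota(\tau(t))\psi'=\psi'\iota(t)$. You instead work entirely inside $\N$: you push the desired identities through the injective map $\iotabar$ and reduce them to properties of the extended braiding $\varepsilon^{\pm}_{\rho,\theta}$, the key new ingredient being the \lqq extended naturality\rqq $\varepsilon^{\pm}_{\rho_2,\theta}\,t=\theta(t)\,\varepsilon^{\pm}_{\rho_1,\theta}$, which you correctly verify by testing against the isometries $\rho_1(w_i)$ and summing $\sum_i\rho_1(w_i)\rho_1(w_i)^*=1$. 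Both routes are sound; your treatment of the infinite sums (one-leg relation, unitarity, naturality) is exactly the care the situation requires. What each buys: the paper's generator computation produces the identity $\alpha^{\pm}_\rho(\psi')=\iota(\varepsilon^{\mp}_{\tau,\rho})\psi'$, which is reused repeatedly later (e.g.\ in Lemma \ref{lem:alphasuNinM} and Theorem \ref{thm:intermdiscretelocal}), and it in effect gives an independent derivation of the containment $\Ad\varepsilon^{\pm}_{\rho,\theta}\circ\rho\circ\iotabar(\M)\subset\iotabar(\M)$; your proof of $(1)$, by contrast, takes that containment as the cited black box, so it proves strictly less there, while your proof of $(3)$ is self-contained at the level of the braiding and does not need the generation result from \cite{IzLoPo1998} at all. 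Item $(2)$ is the same computation in both.
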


\begin{proof}
Observe first that $\varepsilon^{\pm}_{\rho,\theta} \in \Hom(\rho\theta,\theta\rho)$. Thus $(1)$ follows as in the finite index case. To show $(2)$, recall that by discreteness of $\N\subset\M$ \cite[\Lem 3.8]{IzLoPo1998}, $\M$ is generated as a von Neumann algebra by $\iota(\N)$ and by the charged fields $\psi'\in H_\tau$ associated with the irreducibles $\tau\prec\theta$. By $(1)$, $\alpha^{\pm}_\rho$ preserves $\iota(\N)$. By naturality of the braiding, $\alpha^{\pm}_\rho(\psi') = \iota(\varepsilon^{\mp}_{\tau,\rho})\psi'$. Thus 
$(2)$ follows by observing in addition that
\begin{align}
\iota(t) \iota(\varepsilon^{\mp}_{\tau,\rho_1})\psi' &= \iota(t \varepsilon^{\mp}_{\tau,\rho_1})\psi' \\
&= \iota(\varepsilon^{\mp}_{\tau,\rho_2}) \iota(\tau(t))\psi' \\
&= \iota(\varepsilon^{\mp}_{\tau,\rho_2}) \psi' \iota(t).
\end{align}
\end{proof}

\begin{rmk}
Other properties of $\alpha$-induction such as $\alpha^{\pm}_{\rho\sigma} = \alpha^{\pm}_\rho \alpha^{\pm}_\sigma$, $\alpha^{\pm}_{\rho\oplus\sigma} = \alpha^{\pm}_\rho \oplus \alpha^{\pm}_\sigma$, $\alpha^{\pm}_{\rhobar} = \overline{\alpha^{\pm}_\rho}$ and $d(\alpha^{\pm}_\rho) = d(\rho)$ now follow as in the finite index case, \cf \cite[\Sec 3]{BcEv1998-I}, \cite[\Sec 4.6]{BiKaLoRe2014-2}. 
\end{rmk}

For the remainder of this section, assume that the subfactor is in addition local.

\begin{lem}\label{lem:alphasuNinM}
Let $\N\subset\M$ be an irreducible local discrete type $\III$ subfactor. For every irreducible $\rho\prec\theta$, the space of charged fields $H_\rho$ coincides with $\Hom(\id_\M, \alpha^{\pm}_\rho)$ and the linear map:
\begin{align*}
\Hom(\alpha^{\pm}_\rho,\id_\M&) \to \Hom(\rho,\theta) \\
t &\mapsto \iotabar(t)w
\end{align*}
is a bijection.
\end{lem}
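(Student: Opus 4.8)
The plan is to prove the two assertions separately, deriving the first equality from locality together with the explicit action of $\alpha^{\pm}_\rho$ on charged fields, and the bijection from the Pimsner--Popa structure together with faithfulness of $E$. For the equality $H_\rho = \Hom(\id_\M,\alpha^{\pm}_\rho)$, the inclusion ``$\supseteq$'' is immediate: if $y\in\Hom(\id_\M,\alpha^{\pm}_\rho)$ then $yx=\alpha^{\pm}_\rho(x)y$ for all $x\in\M$, and evaluating at $x=\iota(n)$, $n\in\N$, gives $y\iota(n)=\iota(\rho(n))y$ by part $(2)$ of the preceding lemma, so $y\in\Hom(\iota,\iota\rho)=H_\rho$. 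For ``$\subseteq$'', take $\psi\in H_\rho$ and consider the $\sigma$-weakly closed, multiplicatively closed subspace $\{x\in\M : \psi x=\alpha^{\pm}_\rho(x)\psi\}$. By discreteness $\M$ is generated by $\iota(\N)$ and the charged fields $\psi'\in H_\tau$ with $\tau\prec\theta$ irreducible; on $\iota(\N)$ the relation holds since $\psi\in H_\rho$ and $\alpha^{\pm}_\rho\iota=\iota\rho$, while on $\psi'$ it reads $\psi\psi'=\alpha^{\pm}_\rho(\psi')\psi=\iota(\varepsilon^{\mp}_{\tau,\rho})\psi'\psi$, which is exactly the locality relation (valid for both braiding signs). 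Hence the relation holds on all generators, and therefore on $\M$, giving $\psi\in\Hom(\id_\M,\alpha^{\pm}_\rho)$.

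For the second statement I first check well-definedness. Using that $t\in\Hom(\rho_1,\rho_2)$ implies $\iotabar(t)\in\Hom(\sigma_{\rho_1},\sigma_{\rho_2})$, applied with $\rho_1=\alpha^{\pm}_\rho$ and $\rho_2=\id_\M$, and computing $\sigma_{\id_\M}=\iotabar\iota=\theta$ and $\sigma_{\alpha^{\pm}_\rho}=\iotabar\alpha^{\pm}_\rho\iota=\iotabar\iota\rho=\theta\rho$, I obtain $\iotabar(t)\in\Hom(\theta\rho,\theta)$; since $w\in\Hom(\id_\N,\theta)$ one then checks $\iotabar(t)w\rho(n)=\iotabar(t)\theta(\rho(n))w=\theta(n)\iotabar(t)w$, so $\iotabar(t)w\in\Hom(\rho,\theta)$. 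Linearity is clear.

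Injectivity follows from faithfulness of $E$. Applying $\iota$ gives $\iota(\iotabar(t)w)=\gamma(t)\iota(w)$, so $\iotabar(t)w=0$ forces $\gamma(t)\iota(w)=0$, whence $E(t^*t)=\iota(w)^*\gamma(t^*t)\iota(w)=\bigl(\iota(w)^*\gamma(t^*)\bigr)\bigl(\gamma(t)\iota(w)\bigr)=0$ and thus $t=0$. For surjectivity I use the decomposition $\theta=\bigoplus_i\rho_i$ with isometries $w_i=\iotabar(\psi_i^*)w\in\Hom(\rho_i,\theta)$: for $v\in\Hom(\rho,\theta)$ one writes $v=\sum_i w_i(w_i^*v)$ with $w_i^*v\in\Hom(\rho,\rho_i)$, which vanishes unless $\rho_i\cong\rho$ and is otherwise a scalar multiple of a unitary $u_i\in\Hom(\rho,\rho_i)$; hence $\Hom(\rho,\theta)$ is spanned by the elements $w_iu_i$. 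Each such generator is attained: setting $\psi:=\iota(u_i^*)\psi_i\in H_\rho$, so that $t:=\psi^*\in\Hom(\alpha^{\pm}_\rho,\id_\M)$ by the first part, one computes (using $\psi^*=\psi_i^*\iota(u_i)$, $\iotabar\iota=\theta$ and $\theta(u_i)w=wu_i$) that $\iotabar(t)w=\iotabar(\psi_i^*)\theta(u_i)w=\iotabar(\psi_i^*)wu_i=w_iu_i$. By linearity the map is onto, hence bijective.

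The main obstacle is conceptual rather than computational: since the index may be infinite, $\iotabar$ is \emph{not} a genuine categorical conjugate of $\iota$, so the finite-index Frobenius reciprocity is not available. The purpose of discreteness is precisely to replace it, with faithfulness of $E$ playing the role of one conjugate equation (yielding injectivity) and the finiteness of the multiplicities together with the Pimsner--Popa decomposition of $\theta$ playing the role of the other (yielding surjectivity).
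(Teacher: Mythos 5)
Your proof is correct. The first half — establishing $H_\rho=\Hom(\id_\M,\alpha^{\pm}_\rho)$ by checking the intertwining relation on $\iota(\N)$ (via $\alpha^{\pm}_\rho\iota=\iota\rho$) and on the charged fields (where it becomes exactly the locality relation, given that $\alpha^{\pm}_\rho(\psi')=\iota(\varepsilon^{\mp}_{\tau,\rho})\psi'$ by naturality), and then invoking the generation of $\M$ by $\iota(\N)$ and the charged fields — is precisely the paper's argument, as is your injectivity computation via faithfulness of $E$. Where you genuinely diverge is surjectivity: the paper simply cites \cite[Lem.\ 6.15]{DeGi2018} to produce, for a given $s\in\Hom(\rho,\theta)$, a charged field $\psi\in H_\rho$ with $\iotabar(\psi^*)w=s$, and only then verifies that $\psi^*$ lies in $\Hom(\alpha^{\pm}_\rho,\id_\M)$. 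You instead reconstruct that cited lemma from scratch: you decompose $v=\sum_i w_i(w_i^*v)$ with $w_i^*v\in\Hom(\rho,\rho_i)$, use irreducibility to reduce to the finitely many $i$ with $\rho_i\cong\rho$ (finiteness of $n_\rho$ is what makes the sum finite), and exhibit an explicit preimage $\iota(u_i^*)\psi_i$ of each $w_iu_i$ from the Pimsner--Popa basis. This buys self-containedness and makes visible exactly which ingredients of discreteness are used (finite multiplicities and the basis expansion), at the cost of a slightly longer argument; the paper's route is shorter but leans on an external result. One small point present in both your argument and the paper's: the set $\{x\in\M:\psi x=\alpha^{\pm}_\rho(x)\psi\}$ is a weakly closed subalgebra but not obviously a $*$-subalgebra, so strictly one should note that the (non-involutive) algebra generated by $\iota(\N)$ and the $\psi'$ is already $\sigma$-weakly dense — e.g.\ because the span of $\iota(\N)H_\tau$ is the adjoint of the dense Pimsner--Popa span $\sum_\tau H_\tau^*\iota(\N)$; this is a presentational gloss, not a gap specific to your write-up.
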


\begin{proof}
The map sends $\Hom(\alpha^{\pm}_\rho,\id_\M)$ to $\Hom(\rho,\theta)$ and it is injective because $\iotabar(t)w = 0$ implies $\iota(w^*\iotabar(t^*t)w) = E(t^*t) = 0$, hence $t = 0$ by faithfulness of $E$. We only have to show surjectivity. Let $s \in \Hom(\rho,\theta)$. By \cite[\Lem 6.15]{DeGi2018}, which relies only on discreteness, there is a charged field $\psi\in H_\rho$ such that 
$$\iotabar(\psi^*)w = s.$$ 
Since $\psi^* \in H_\rho^* = \Hom(\iota \rho,\iota)$ and $\iota \rho = \alpha^{\pm}_\rho \iota$ by the extension property of $\alpha$-induction, 
$\psi^*$ has the desired intertwining property on $\iota(\N)$. 
By naturality of the braiding, $\alpha^{\pm}_\rho(\psi') = \iota(\varepsilon^{\mp}_{\tau,\rho})\psi'$ for every other $\psi'\in H_\tau$, $\tau\prec\theta$. By locality, we get 
\begin{align}\label{eq:localthenintertalpha}
\psi^* \alpha^{\pm}_\rho({\psi'})^* = \psi^*{\psi'}^*\iota(\varepsilon^{\pm}_{\rho,\tau}) = {\psi'}^*\psi^*.
\end{align}
By discreteness, $\iota(\N)$ and the charged fields $\psi'$ generate $\M$ as a von Neumann algebra, hence we conclude that $\psi^*\in\Hom(\alpha^{\pm}_\rho,\id_\M)$. This shows that $H_\rho = \Hom(\id_\M, \alpha^{\pm}_\rho)$ and surjectivity.
\end{proof}

\begin{rmk}
A version of the equality $H_\rho = \Hom(\id_\M, \alpha^{\pm}_\rho)$ appears also in \cite[\Cor 3.9 (2)]{Xu2005} in the context of DHR endomorphisms and strongly additive pairs of conformal nets \cite[\Sec 3]{Xu2005}.
\end{rmk}

The following is a generalization of the \lqq main formula\rqq for $\alpha$-induction \cite[\Thm 3.9]{BcEv1998-I} to local discrete subfactors. Note that the proof is different from the original one when the index is infinite. The same statement is proven in \cite[\Thm 3.8]{Xu2005} in the context of possibly infinite index subfactors coming from strongly additive pairs of conformal nets.

\begin{thm}\label{thm:mainformulaalpha}
Let $\N\subset\M$ be an irreducible local discrete type $\III$ subfactor. For every $\rho,\sigma \in \C$, the linear map:
\begin{align*}
\Hom(\alpha^{\pm}_\rho,\alpha^{\pm}_\sigma&) \to \Hom(\rho,\theta \sigma) \\
t &\mapsto \iotabar(t)w
\end{align*}
is a bijection.
\end{thm}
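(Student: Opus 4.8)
The plan is to reproduce the two opening moves of Lemma~\ref{lem:alphasuNinM} (well-definedness and injectivity, which go through for arbitrary $\rho,\sigma\in\C$ without any irreducibility or $\prec\theta$ assumption) and then to close the argument by a dimension count instead of by constructing the inverse explicitly. First I would check well-definedness: for $t\in\Hom(\alpha^{\pm}_\rho,\alpha^{\pm}_\sigma)$ and $n\in\N$, using $w\in\Hom(\id_\N,\theta)$, the extension property $\alpha^{\pm}_\rho\iota=\iota\rho$ and $\alpha^{\pm}_\sigma\iota=\iota\sigma$, the relation $\theta=\iotabar\iota$, and the multiplicativity of $\iotabar$, one computes
\[
\iotabar(t)w\,\rho(n)=\iotabar(t)\,\theta\rho(n)\,w=\iotabar\big(t\,\alpha^{\pm}_\rho(\iota(n))\big)\,w=\iotabar\big(\alpha^{\pm}_\sigma(\iota(n))\,t\big)\,w=\theta\sigma(n)\,\iotabar(t)\,w,
\]
so that $\iotabar(t)w\in\Hom(\rho,\theta\sigma)$. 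Injectivity is verbatim as in the lemma: since $E=\iota(w)^*\gamma(\slot)\iota(w)$ and $\gamma=\iota\iotabar$, one has $E(t^*t)=\iota\big((\iotabar(t)w)^*(\iotabar(t)w)\big)$, so $\iotabar(t)w=0$ forces $E(t^*t)=0$ and hence $t=0$ by faithfulness of $E$.

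The only substantial point is surjectivity, which I would deduce indirectly: the map is an injective linear map of complex vector spaces, so it suffices to exhibit \emph{some} isomorphism showing that domain and codomain are finite dimensional of equal dimension. Here I would use that $\alpha$-induction is a tensor functor preserving direct sums and conjugates (the properties $\alpha^{\pm}_{\rho\sigma}=\alpha^{\pm}_\rho\alpha^{\pm}_\sigma$, $\alpha^{\pm}_{\sigmabar}=\overline{\alpha^{\pm}_\sigma}$ and the functoriality recorded above), so that if $\sigmabar\in\C$ is a conjugate of $\sigma$ with a standard solution $r\in\Hom(\id_\N,\sigmabar\sigma)$, $\bar r\in\Hom(\id_\N,\sigma\sigmabar)$ of the conjugate equations, then $\iota(r)$ and $\iota(\bar r)$ solve the conjugate equations for $\alpha^{\pm}_{\sigmabar}=\overline{\alpha^{\pm}_\sigma}$ in $\End(\M)$. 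Frobenius reciprocity in $\End(\M)$ and in $\C$ then gives $\Hom(\alpha^{\pm}_\rho,\alpha^{\pm}_\sigma)\cong\Hom(\alpha^{\pm}_{\rho\sigmabar},\id_\M)$ and $\Hom(\rho,\theta\sigma)\cong\Hom(\rho\sigmabar,\theta)$. Finally I would apply the $\sigma=\id$ case, namely Lemma~\ref{lem:alphasuNinM}, to the object $\rho\sigmabar\in\C$: although that lemma is stated for irreducible $\tau\prec\theta$, it extends to every object of $\C$ by decomposing into irreducibles and using additivity of both sides, the only new case being an irreducible $\tau\not\prec\theta$, for which $\Hom(\tau,\theta)=0$ and hence $\Hom(\alpha^{\pm}_\tau,\id_\M)=0$ by the injectivity already established. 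Thus $\Hom(\alpha^{\pm}_{\rho\sigmabar},\id_\M)\cong\Hom(\rho\sigmabar,\theta)$, all four spaces share the common dimension $\dim\Hom(\rho\sigmabar,\theta)$, and this is finite since every irreducible subendomorphism of $\theta$ has finite dimension and finite multiplicity; injectivity together with equal finite dimension yields bijectivity.

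The step I expect to require the most care is the passage through the conjugate $\sigmabar$: one must set up the two Frobenius reciprocity isomorphisms and the extended lemma for the \emph{same} sign of the braiding, and one must ensure that the finiteness of $\dim\Hom(\rho\sigmabar,\theta)$ is genuinely available, which is precisely where discreteness enters. By contrast, writing down the inverse map explicitly, as in the finite index proof, would require a bounded solution $\bar w\in\Hom(\id_\M,\gamma)$ of the conjugate equations for $\iota$; such a $\bar w$ does not exist when the index is infinite, and the dimension-count argument is exactly what circumvents its absence.
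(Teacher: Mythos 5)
Your proof is correct, and its skeleton coincides with the paper's: extend Lemma \ref{lem:alphasuNinM} from irreducibles $\rho\prec\theta$ to arbitrary objects of $\C$ by additivity over a decomposition into irreducibles (your extra remark that an irreducible $\tau\in\C$ with $\tau\not\prec\theta$ contributes $0$ on both sides is right, and is in fact slightly more careful than the paper's wording, which tacitly treats every irreducible component of an object of $\C$ as a subobject of $\theta$), then reduce to the case $\sigma=\id$ by passing through the conjugate $\sigmabar$ via the two Frobenius reciprocity isomorphisms $\Hom(\alpha^{\pm}_\rho,\alpha^{\pm}_\sigma)\cong\Hom(\alpha^{\pm}_{\rho\sigmabar},\id_\M)$ and $\Hom(\rho,\theta\sigma)\cong\Hom(\rho\sigmabar,\theta)$. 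The only genuine divergence is the last step: the paper chases a given $s\in\Hom(\rho,\theta\sigma)$ around the square, exhibiting the explicit preimage $t\,\iota(\rho(r_\sigma))$ with $\iotabar(t)w=\theta(\rbar_\sigma^*)s$ and checking $\iotabar(t\,\iota(\rho(r_\sigma)))w=s$ via the conjugate equation $\rbar_\sigma^*\sigma(r_\sigma)=1$, whereas you conclude surjectivity abstractly from injectivity plus equality of finite dimensions at all four corners. Your count is legitimate: $\rho\sigmabar$ is a finite direct sum of irreducibles, each occurring in $\theta$ with finite multiplicity $n_\tau\le d(\tau)^2$ by discreteness, so $\dim\Hom(\rho\sigmabar,\theta)<\infty$, and Frobenius reciprocity preserves dimensions because $d(\sigma)<\infty$. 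The paper's version buys an explicit formula for the inverse; yours buys a shorter, non-constructive ending. Both use the essential inputs in the same way: locality (to place the charged fields of $H_\tau$ in $\Hom(\id_\M,\alpha^{\pm}_\tau)$, which drives surjectivity in the $\sigma=\id$ case) and discreteness (finite multiplicities and the charged-field bijection of \cite{DeGi2018}).
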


\begin{proof}
Observe first that Lemma \ref{lem:alphasuNinM} holds also with $\rho \in \C$ replacing $\rho \prec \theta$ irreducible. Indeed, every $\rho \in \C$ can be written as a finite direct sum of irreducibles $\rho_i$ in $\C$, $i=1,\ldots,n$, \ie $\rho = \sum_i w_i \rho_i(\slot) w_i^*$, where the $w_i$ form a Cuntz algebra of isometries. If $\rho_i$ is not a subendomorphism of $\theta$, for some $i$, \ie $\Hom(\rho_i,\theta) = \{0\}$, then $H_{\rho_i} = \{0\}$, \cf \cite[\Lem 6.15]{DeGi2018}, \cite[\Prop 3.2]{IzLoPo1998}. Indeed, let $\psi\in H_{\rho_i}$, then $\iotabar(\psi^*)w \in\Hom(\rho_i,\theta) = \{0\}$. Hence $E(\psi \psi^*) = \iota(w \iotabar(\psi \psi^*)w) = 0$, which implies $\psi = 0$ by faithfulness of $E$. Moreover, $H_\rho = \bigoplus_i H_{\rho_i}$ and $\Hom(\rho,\theta) = \bigoplus_i \Hom(\rho_i,\theta)$ as vector spaces, namely every $\psi \in H_\rho$ and $v \in \Hom(\rho,\theta)$ can be written uniquely as $\psi = \sum_i \iota(w_i) \psi_i$ and $v = \sum v_i w_i^*$, with $\psi_i \in H_{\rho_i}$ and $v_i \in \Hom(\rho_i, \theta)$.

The analogous of \eqref{eq:localthenintertalpha} holds, namely
\begin{align}
\psi^* \alpha^{\pm}_\rho({\psi'})^* &= \sum_i \psi_i^* \iota(w_i^*) {\psi'}^*\iota(\varepsilon^{\pm}_{\rho,\tau}) \\
&= \sum_i \psi_i^* {\psi'}^* \iota(\tau(w_i^*) \varepsilon^{\pm}_{\rho,\tau}) \\
&= \sum_i \psi_i^* {\psi'}^* \iota(\varepsilon^{\pm}_{\rho_i,\tau} w_i^*) \\
&= \sum_i {\psi'}^* \psi_i^* \iota(w_i^*) = {\psi'}^*\psi^*
\end{align}
for every $\psi'\in H_\tau$, $\tau\prec\theta$, by locality and naturality of the braiding. Thus $\psi \in \Hom(\id_\M, \alpha^{\pm}_\rho)$. Moreover, the map $t \mapsto \iotabar(t)w
$ respects the direct sum decompositions, namely
\begin{align}
\iotabar(\psi^*)w &= \iotabar(\sum_i \psi_i^* \iota(w_i^*))w \\
&= \sum_i \iotabar(\psi_i^*) \theta(w_i^*)w \\
&= \sum_i \iotabar(\psi_i^*) w w_i^*.
\end{align}
The map sends $\Hom(\alpha^{\pm}_\rho,\alpha^{\pm}_\sigma)$ to $\Hom(\rho,\theta\sigma)$. Injectivity follows as in the proof of Lemma \ref{lem:alphasuNinM}. To show surjectivity, we consider the following diagram:
\begin{equation}
\begin{tikzcd}
\Hom(\alpha^{\pm}_{\rho\sigmabar},\id_\M) \arrow[r, " "] \arrow[d, " "]  & \Hom(\rho\sigmabar,\theta) \\
\Hom(\alpha^{\pm}_\rho,\alpha^{\pm}_\sigma) \arrow[r, " "]  & \Hom(\rho, \theta\sigma) \arrow[u, " "].
\end{tikzcd}
\end{equation}
The vertical arrows are the isomorphisms given by Frobenius reciprocity, as $\alpha^{\pm}_{\rho\sigmabar} = \alpha^{\pm}_{\rho} \alpha^{\pm}_{\sigmabar}$ 
and $\alpha^{\pm}_{\sigmabar}$ is a conjugate of $\alpha^{\pm}_{\sigma}$.
Let $s \in \Hom(\rho,\theta\sigma)$ and choose a solution $r_\sigma \in \Hom(\id_\N, \sigmabar\sigma)$, $\rbar_\sigma \in \Hom(\id_\N, \sigma\sigmabar)$ of the conjugate equations for $\sigma$ and $\sigmabar$. Then $\theta(\rbar_\sigma^*) s \in \Hom(\rho\sigmabar,\theta)$ with $\rho\sigmabar\in \C$. By the argument above generalizing Lemma \ref{lem:alphasuNinM}, there is an element $t \in \Hom(\alpha^{\pm}_{\rho\sigmabar}, \id_\M)$ such that $\iotabar(t) w = \theta(\rbar_\sigma^*) s$. By $\alpha^{\pm}_{\rho} \iota = \iota \rho$, we have that $t \iota(\rho(r_\sigma)) \in \Hom(\alpha^{\pm}_{\rho}, \alpha^{\pm}_{\sigma})$. To complete the proof, we show that
\begin{align}
\iotabar(t \iota(\rho(r_\sigma))) w &= \iotabar(t) \theta(\rho(r_\sigma)) w \\
&= \iotabar(t) w \rho(r_\sigma) \\
&= \theta(\rbar_\sigma^*) s \rho(r_\sigma) \\
&= \theta(\rbar_\sigma^* \sigma(r_\sigma)) s = s
\end{align}
by the conjugate equation $\rbar_\sigma^* \sigma(r_\sigma) = 1$, hence the diagram commutes and surjectivity is proven.
\end{proof}

We conclude this section by showing a version of \textbf{$\alpha\sigma$-reciprocity} \cite[\Thm 3.21]{BcEv1998-I}, \cite[\Prop 3.3]{BcEv1999-III} for local discrete subfactors. 

\begin{thm}\label{thm:alphasigmarecip}
Let $\C \subset \End(\N)$ be as in Theorem \ref{thm:mainformulaalpha}. For every $\rho,\sigma \in \C$ and $\beta\in\End(\M)$ such that $\beta \prec \alpha^{\pm}_\sigma$, the linear map:
\begin{align*}
\Hom(\alpha^{\pm}_\rho,\beta&) \to \Hom(\rho,\sigma_\beta) \\
t &\mapsto \iotabar(t)w
\end{align*}
is a bijection.
\end{thm}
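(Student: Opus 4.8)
The plan is to deduce the statement from the \emph{main formula} (Theorem \ref{thm:mainformulaalpha}) by an isometry bookkeeping argument, exploiting the observation that the main formula is precisely the special case $\beta = \alpha^{\pm}_\sigma$ of the present theorem. Using the extension property $\alpha^{\pm}_\sigma \iota = \iota\sigma$ together with $\theta = \iotabar\iota$, one computes $\sigma_{\alpha^{\pm}_\sigma} = \iotabar\,\alpha^{\pm}_\sigma\,\iota = \iotabar\,\iota\,\sigma = \theta\sigma$, so the target $\Hom(\rho,\theta\sigma)$ in Theorem \ref{thm:mainformulaalpha} is exactly $\Hom(\rho,\sigma_{\alpha^{\pm}_\sigma})$, and the two maps $t\mapsto\iotabar(t)w$ coincide. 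It then remains to pass from $\alpha^{\pm}_\sigma$ to an arbitrary subobject $\beta\prec\alpha^{\pm}_\sigma$.

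First I would fix an isometry $v\in\Hom(\beta,\alpha^{\pm}_\sigma)$ (which exists since $\beta\prec\alpha^{\pm}_\sigma$) and set $V := \iotabar(v)$. Since $\iotabar$ is a unital $*$-homomorphism, $V^*V = \iotabar(v^*v) = 1$, and a short computation using $v\,\beta(\iota(x)) = \alpha^{\pm}_\sigma(\iota(x))\,v$, $\alpha^{\pm}_\sigma\iota = \iota\sigma$ and $\theta = \iotabar\iota$ shows $V\sigma_\beta(x) = \theta\sigma(x)\,V$ for all $x\in\N$; hence $V\in\Hom(\sigma_\beta,\theta\sigma)$ is an isometry with $V^*\in\Hom(\theta\sigma,\sigma_\beta)$.

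The heart of the argument is the commutative square relating the map $\Phi_\beta\colon t\mapsto\iotabar(t)w$ to the main-formula bijection $\Phi_{\alpha^{\pm}_\sigma}\colon s\mapsto\iotabar(s)w$, via left multiplication by $v$ on the source spaces and by $V$ on the target spaces. Commutativity is immediate from multiplicativity of $\iotabar$:
\begin{align}
V\,\Phi_\beta(t) = \iotabar(v)\iotabar(t)w = \iotabar(vt)w = \Phi_{\alpha^{\pm}_\sigma}(vt).
\end{align}
Since $t\in\Hom(\alpha^{\pm}_\rho,\beta)$ gives $vt\in\Hom(\alpha^{\pm}_\rho,\alpha^{\pm}_\sigma)$, this already shows $\Phi_\beta(t) = V^*\,\Phi_{\alpha^{\pm}_\sigma}(vt)$ lands in $\Hom(\rho,\sigma_\beta)$, so $\Phi_\beta$ is well defined.

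From here bijectivity follows formally from Theorem \ref{thm:mainformulaalpha} and $V^*V = 1 = v^*v$. For injectivity, $\Phi_\beta(t) = 0$ forces $\Phi_{\alpha^{\pm}_\sigma}(vt) = V\Phi_\beta(t) = 0$, hence $vt = 0$ and $t = v^*vt = 0$. For surjectivity, given $r\in\Hom(\rho,\sigma_\beta)$ I would take $Vr\in\Hom(\rho,\theta\sigma)$, produce via the main-formula bijection the unique $s\in\Hom(\alpha^{\pm}_\rho,\alpha^{\pm}_\sigma)$ with $\Phi_{\alpha^{\pm}_\sigma}(s) = Vr$, and set $t := v^*s\in\Hom(\alpha^{\pm}_\rho,\beta)$; then $\Phi_\beta(t) = \iotabar(v^*s)w = V^*\Phi_{\alpha^{\pm}_\sigma}(s) = V^*Vr = r$. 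I do not expect any genuine obstacle: all the hard content (locality, discreteness, Frobenius reciprocity) is already packaged into Theorem \ref{thm:mainformulaalpha}, and the only points needing care are checking $V\in\Hom(\sigma_\beta,\theta\sigma)$ and the compatibility of the two embeddings, both of which reduce to $\iotabar$ being a unital $*$-homomorphism and the extension property of $\alpha$-induction.
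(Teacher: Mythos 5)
Your proposal is correct and follows essentially the same route as the paper: both reduce the statement to Theorem \ref{thm:mainformulaalpha} via an isometry $v\in\Hom(\beta,\alpha^{\pm}_\sigma)$, the observation $\sigma_{\alpha^{\pm}_\sigma}=\theta\sigma$, and the commutative square built from left multiplication by $v$ and $\iotabar(v)$. The only (immaterial) difference is that the paper gets injectivity directly from faithfulness of $E$ as in Lemma \ref{lem:alphasuNinM}, whereas you deduce it from the injectivity already contained in the main formula via $vt=0\Rightarrow t=v^*vt=0$.
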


\begin{proof}
The map sends $\Hom(\alpha^{\pm}_\rho,\beta)$ to $\Hom(\rho,\sigma_\beta)$ and it is injective as in the proof of Lemma \ref{lem:alphasuNinM}. To show surjectivity, choose an isometry $v\in \Hom(\beta,\alpha^{\pm}_\sigma)$ and observe that $\iotabar(v) \in \Hom(\sigma_\beta, \theta\sigma)$ where we used the fact that $\sigma_{\alpha^{\pm}_\sigma} = \theta\sigma$. Let $s\in\Hom(\rho, \sigma_\beta)$, thus $\iotabar(v)s \in \Hom(\rho,\theta\sigma)$, and consider the following diagram:
\begin{equation}
\begin{tikzcd}
\Hom(\alpha^{\pm}_{\rho},\alpha^{\pm}_{\sigma}) \arrow[r, " "] \arrow[d, " v^* \slot "]  & \Hom(\rho,\theta\sigma) \\
\Hom(\alpha^{\pm}_\rho,\beta) \arrow[r, " "]  & \Hom(\rho, \sigma_\beta) \arrow[u, " \iotabar(v) \slot  "].                                     
\end{tikzcd}
\end{equation}
By Theorem \ref{thm:mainformulaalpha}, there is an element $t\in\Hom(\alpha^{\pm}_{\rho},\alpha^{\pm}_{\sigma})$ such that $\iotabar(t)w = \iotabar(v)s$. Thus $v^*t \in \Hom(\alpha^{\pm}_\rho,\beta)$ and it fulfills $\iotabar(v^*t)w = \iotabar(v^*)\iotabar(v)s = s$, which shows surjectivity.
\end{proof}

\section{Intermediate inclusions}\label{sec:intermediate}

In this section, let $\N\subset\M$ be an irreducible local discrete type $\III$ subfactor and let $\P$ be an intermediate von Neumann algebra, namely $\N\subset\P\subset\M$. We show that $\N\subset\P$ and $\P\subset\M$ are both discrete and local. Note that in general, without assuming locality of $\N\subset\M$, the intermediate inclusion $\P\subset\M$ is not even semidiscrete in general \cite{IzLoPo1998}, \cite{To2009}. 

Denote by $\iota_{\N\subset\M}$, $\iota_{\N\subset\P}$ and $\iota_{\P\subset\M}$ respectively the inclusion morphisms of $\N\subset\M$, $\N\subset\P$ and $\P\subset\M$. Similarly for the conjugate morphism $\iotabar$, for the canonical and dual canonical endomorphism $\gamma$ and $\theta$, for the conditional expectation $E$ and the associated isometry $w$.

\begin{lem}\label{lem:intermsemidiscrete}
The inclusions $\N\subset\P$ and $\P\subset\M$ are semidiscrete, \ie they admit a normal faithful conditional expectation. Moreover, they are both irreducible and $\P$ is a type $\III$ factor.
\end{lem}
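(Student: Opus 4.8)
The plan is to dispatch the algebraic assertions directly and to reserve the modular machinery for the two existence statements. Irreducibility of both inclusions and factoriality of $\P$ are immediate from $\N'\cap\M=\CC1$ together with $\N\subseteq\P\subseteq\M$: one has $\N'\cap\P\subseteq\N'\cap\M=\CC1$, and since $\P'\subseteq\N'$ also $\P'\cap\M\subseteq\N'\cap\M=\CC1$ and $\P'\cap\P\subseteq\N'\cap\M=\CC1$. For the expectation onto $\N$ I would simply restrict the unique $E\in\cE(\M,\N)$: as $E(\P)\subseteq E(\M)=\iota(\N)\subseteq\P$, and unitality, positivity, $\N$-bimodularity, normality and faithfulness all pass to restrictions, $E_1:=E_{\restriction\P}$ is a normal faithful conditional expectation of $\P$ onto $\N$, so that $\N\subset\P$ is semidiscrete.

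To see that $\P$ is of type $\III$ I would argue by contradiction using $E_1$ and irreducibility. Fix a faithful normal state $\varphi$ on $\N$ and set $\omega:=\varphi\circ E_1$; by Takesaki's theorem $\sigma^\omega_t(\N)=\N$ and $(\sigma^\omega_t)_{\restriction\N}=\sigma^\varphi_t$. If $\P$ were not type $\III$ it would be semifinite, so $\sigma^\omega$ would be inner, say $\sigma^\omega_t=\Ad u_t$ with $\{u_t\}\subseteq\P$ a weakly continuous unitary one-parameter group implementing the modular flow $\sigma^\varphi_t$ of the type $\III$ factor $\N$. Since $\sigma^\varphi_t$ is outer for $t\neq0$, $\N$-bimodularity of $E_1$ gives $E_1(u_t)\in\Hom(\id_\N,\sigma^\varphi_t)=\{0\}$ there; a standard smearing argument (forming $a=\int f(t)u_t\,dt$) then produces a nonzero positive element of $\P$ annihilated by $E_1$, contradicting faithfulness. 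Hence $\P$ is type $\III$.

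The main point is the existence of a normal faithful conditional expectation $E_\P\colon\M\to\P$, which is exactly where locality is indispensable. Again I would use Takesaki's theorem: taking $\omega:=\varphi\circ E$ on $\M$, it suffices to prove $\sigma^\omega_t(\P)=\P$ for all $t$. The approach is spectral. By discreteness $\M$ is generated by $\iota(\N)$ and the charged fields $H_\rho$, $\rho\prec\theta$ irreducible, graded by the sectors $\rho$. Two ingredients drive the argument: first, each charged field is a modular eigenoperator up to $\N$, namely $\sigma^\omega_t(\psi)\in\iota(\N)\,\psi$ for $\psi\in H_\rho$, which I would extract from the representation $E=\iota(w)^*\gamma(\slot)\iota(w)$ together with $\theta=\sum_i w_i\rho_i(\slot)w_i^*$; second, the intermediate algebra is spectrally generated over $\N$, that is $\P=\bigl(\iota(\N)\cup\bigcup_\rho(H_\rho\cap\P)\bigr)''$. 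Granting these, $\sigma^\omega_t$ sends $\iota(\N)$ onto $\iota(\N)\subseteq\P$ and each $\psi\in H_\rho\cap\P$ into $\iota(\N)\psi\subseteq\P$, whence $\sigma^\omega_t(\P)\subseteq\P$; applying this to $-t$ gives equality, and Takesaki's theorem produces $E_\P$.

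The hard part is the second ingredient, the spectral generation of $\P$, and with it the precise entry of locality. Here I would exploit the identification $H_\rho=\Hom(\id_\M,\alpha^\pm_\rho)$ of Lemma \ref{lem:alphasuNinM} and the categorical behaviour of $\alpha$-induction in Theorem \ref{thm:mainformulaalpha}: locality supplies the commutation relation $\iota(\varepsilon^\pm_{\sigma,\rho})\psi'\psi=\psi\psi'$ between charged fields of different sectors, which is what lets the sector grading of $\M$ restrict to $\P$ and upgrades the obvious invariance $E(\P)\subseteq\P$ into a full decomposition $\P=\bigoplus_\rho\bigl(\P\cap\overline{\iota(\N)H_\rho}\bigr)$. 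Establishing that $E$ and the sector projections leave $\P$ globally invariant is the technical heart of the lemma; once it is secured, the modular invariance of $\P$, and subsequently the discreteness and locality of $\P\subset\M$ in Theorem \ref{thm:intermdiscretelocal}, follow.
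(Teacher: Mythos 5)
Your treatment of the easy assertions is fine and agrees with the paper: irreducibility of both inclusions and factoriality of $\P$ follow from $\N'\cap\M=\CC1$, and restricting the unique $E\in\cE(\M,\N)$ gives the expectation of $\P$ onto $\N$. Your type $\III$ argument is essentially workable but misstates one point: for a type $\III$ factor $\sigma^\varphi_t$ need not be outer for \emph{every} $t\neq0$ (for type $\III_\lambda$ one has $T(\N)=(2\pi/\log\lambda)\ZZ$), so before smearing you must argue that $T(\N)$ is a proper, hence null, measurable subgroup of $\RR$ and that $E_1(u_t)=0$ only almost everywhere.

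The genuine gap is in the main point, the existence of $E_{\P\subset\M}$. Your reduction via Takesaki's theorem to the $\sigma^{\varphi\circ E}$-invariance of $\P$, and further to the spectral generation $\P=\bigl(\iota(\N)\cup\bigcup_\rho(H_\rho\cap\P)\bigr)''$, is a reasonable skeleton, but the decisive step --- that the sector components of an element of $\P$ again lie in $\P$, equivalently that the projections onto the subspaces $\overline{\iota(\N)H_\rho\Omega}$ leave $\overline{\P\Omega}$ invariant --- is exactly what you leave unproven (``once it is secured\dots''). Gesturing at the locality commutation relations and at Lemma \ref{lem:alphasuNinM} and Theorem \ref{thm:mainformulaalpha} does not close it: the $\rho$-component of $x\in\P$ is built from $E(\psi_{\rho,r}x)$ and $\psi_{\rho,r}^*$, and the latter has no reason to lie in $\P$, so the claimed decomposition $\P=\bigoplus_\rho(\P\cap\overline{\iota(\N)H_\rho})$ is genuinely nontrivial. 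The paper does not prove this from scratch either: it invokes the Izumi--Longo--Popa theorem \cite[\Thm 3.9]{IzLoPo1998}, which produces a normal conditional expectation onto any intermediate algebra of an irreducible discrete inclusion satisfying $a_\rho=1_{H_\rho}$, and supplies that hypothesis from locality via \cite[\Prop 2.19]{BiDeGi2021}. So either cite that result, as the paper does, or supply a complete proof of the sector-invariance step; as written, your argument is a plan rather than a proof at its crucial point.
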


\begin{proof}
The restriction of $E_{\N\subset\M}$ to $\P$, denoted by $E_{\N\subset\P} : \P\to\N\subset\P$, is clearly a normal faithful conditional expectation onto $\N$. The existence of a normal faithful conditional expectation $E_{\P\subset\M} : \M \to \P\subset\M$ follows by combining a deep result of Izumi--Longo--Popa \cite[\Thm 3.9]{IzLoPo1998} with a consequence of locality \cite[\Prop 2.19]{BiDeGi2021}. The rest is immediate.
\end{proof}

\begin{rmk}\label{rmk:haarKabsorbshaarH}
The intermediate conditional expectation $E_{\N\subset\P}$ is just the restriction of $E_{\N\subset\M}$ to $\P$.
The other intermediate conditional expectation $E_{\P\subset\M}$ is absorbed by $E_{\N\subset\M}$, namely $E_{\N\subset\M} E_{\P\subset\M} = E_{\N\subset\M}$, because $E_{\N\subset\M} = E_{\N\subset\P} E_{\P\subset\M}$ by uniqueness, as $\N\subset\M$ is irreducible, and $E_{\P\subset\M}^2 = E_{\P\subset\M}$.
\end{rmk}

\begin{lem}\label{lem:NinPisdiscretelocal}
The subfactor $\N\subset\P$ is discrete and local.
\end{lem}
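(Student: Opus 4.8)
The plan is to transfer both properties from $\N\subset\M$ to $\N\subset\P$ along the inclusion $\iota_{\P\subset\M}$. By Lemma \ref{lem:intermsemidiscrete} I already know that $\N\subset\P$ is irreducible and semidiscrete, that $\P$ is a type $\III$ factor, and that $\P\subset\M$ carries a normal faithful expectation $E_{\P\subset\M}$. The backbone of the argument is the sector containment $\theta_{\N\subset\P}\prec\theta_{\N\subset\M}$, from which discreteness and the braiding follow, while locality descends by a direct cancellation.

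First I would establish $\theta_{\N\subset\P}\prec\theta_{\N\subset\M}$. Since $\P\subset\M$ is semidiscrete, Proposition \ref{prop:Estine} applied to it furnishes an isometry $w_{\P\subset\M}\in\Hom(\id_\P,\theta_{\P\subset\M})$, so $\id_\P\prec\theta_{\P\subset\M}$. Applying the $*$-homomorphism $\iotabar_{\N\subset\P}(\slot)\iota_{\N\subset\P}$ shows that $\iotabar_{\N\subset\P}(w_{\P\subset\M})$ is an isometric intertwiner from $\theta_{\N\subset\P}$ into $\iotabar_{\N\subset\P}\theta_{\P\subset\M}\iota_{\N\subset\P}$, hence $\theta_{\N\subset\P}\prec\iotabar_{\N\subset\P}\theta_{\P\subset\M}\iota_{\N\subset\P}$. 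Now $\theta_{\P\subset\M}=\iotabar_{\P\subset\M}\iota_{\P\subset\M}$ and $\iota_{\N\subset\M}=\iota_{\P\subset\M}\iota_{\N\subset\P}$, while as sectors $[\iotabar_{\N\subset\M}]=[\iotabar_{\N\subset\P}\iotabar_{\P\subset\M}]$ by contravariance of conjugation in the tower $\N\subset\P\subset\M$ (\cf \cite{Lo1990}); therefore $\iotabar_{\N\subset\P}\theta_{\P\subset\M}\iota_{\N\subset\P}$ is unitarily equivalent to $\iotabar_{\N\subset\M}\iota_{\N\subset\M}=\theta_{\N\subset\M}$, giving $\theta_{\N\subset\P}\prec\theta_{\N\subset\M}$.

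From this, discreteness and braidedness follow. Every irreducible $\tau\prec\theta_{\N\subset\P}$ is an irreducible subendomorphism of $\theta_{\N\subset\M}$, hence of finite dimension by discreteness of $\N\subset\M$ (\cite[\Sec 3]{IzLoPo1998}), and its multiplicity in $\theta_{\N\subset\P}$ is bounded by its multiplicity in $\theta_{\N\subset\M}$, which is finite. As $\N\subset\P$ is irreducible and semidiscrete and $\theta_{\N\subset\P}$ decomposes into finite-dimensional irreducibles with finite multiplicities, the characterization of discreteness recalled in Section \ref{sec:condexp} (equivalently \cite[\Sec 3]{IzLoPo1998}) yields that $\N\subset\P$ is discrete. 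Moreover the category $\C_{\N\subset\P}\subset\End(\N)$ generated by the irreducible components of $\theta_{\N\subset\P}$ is, by the previous paragraph, a full rigid \Cstar-tensor subcategory of $\C=\C_{\N\subset\M}$, so the unitary braiding $\{\varepsilon^\pm_{\rho,\sigma}\}$ on $\C$ restricts to a unitary braiding on $\C_{\N\subset\P}$ and $\N\subset\P$ is braided.

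It then remains to verify locality for $\N\subset\P$ with respect to this restricted braiding, and the key observation is that $\iota_{\P\subset\M}$ carries charged fields to charged fields with the same charge: if $\psi\in H^{\N\subset\P}_\rho=\Hom(\iota_{\N\subset\P},\iota_{\N\subset\P}\rho)\subset\P$, then using $\iota_{\N\subset\M}=\iota_{\P\subset\M}\iota_{\N\subset\P}$ one checks directly that $\iota_{\P\subset\M}(\psi)\in H^{\N\subset\M}_\rho$. Given irreducible $\rho,\sigma\prec\theta_{\N\subset\P}$ (hence $\prec\theta_{\N\subset\M}$) and $\psi\in H^{\N\subset\P}_\rho$, $\psi'\in H^{\N\subset\P}_\sigma$, I would apply locality of $\N\subset\M$ to $\iota_{\P\subset\M}(\psi)$ and $\iota_{\P\subset\M}(\psi')$ to obtain
$$\iota_{\P\subset\M}\big(\iota_{\N\subset\P}(\varepsilon^\pm_{\sigma,\rho})\,\psi'\psi\big)=\iota_{\N\subset\M}(\varepsilon^\pm_{\sigma,\rho})\,\iota_{\P\subset\M}(\psi')\iota_{\P\subset\M}(\psi)=\iota_{\P\subset\M}(\psi)\iota_{\P\subset\M}(\psi')=\iota_{\P\subset\M}(\psi\psi'),$$
where $\iota_{\N\subset\M}(\varepsilon^\pm_{\sigma,\rho})=\iota_{\P\subset\M}\iota_{\N\subset\P}(\varepsilon^\pm_{\sigma,\rho})$ was used. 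Cancelling the injective $*$-homomorphism $\iota_{\P\subset\M}$ gives $\iota_{\N\subset\P}(\varepsilon^\pm_{\sigma,\rho})\psi'\psi=\psi\psi'$, which is exactly locality of $\N\subset\P$. I expect the main obstacle to be the first step: making the identity $\theta_{\N\subset\P}\prec\theta_{\N\subset\M}$ rigorous in the infinite-index setting, where $\iotabar$ is not a genuine $2$-categorical conjugate, so the contravariance of conjugation in the tower may only be invoked at the level of unitary equivalence classes of endomorphisms.
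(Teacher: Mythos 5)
Your proof is correct and follows essentially the same route as the paper: everything is reduced to the containment $\theta_{\N\subset\P}\prec\theta_{\N\subset\M}$, from which braidedness and discreteness are inherited from $\C$ and locality of $\N\subset\P$ is obtained by pushing its charged fields into $\M$ via the injective homomorphism $\iota_{\P\subset\M}$ and invoking locality of $\N\subset\M$. The only difference is that the paper simply cites \cite[\Thm 2.7]{To2009} for the containment, whereas you derive it from $\id_\P\prec\theta_{\P\subset\M}$ (using $E_{\P\subset\M}$ from Lemma \ref{lem:intermsemidiscrete}) together with the multiplicativity $[\iotabar_{\N\subset\M}]=[\iotabar_{\N\subset\P}\,\iotabar_{\P\subset\M}]$ of the canonical endomorphism, an identity the paper itself uses later in the guise of $\sigma^{\N\subset\P}_{\theta_{\P\subset\M}}=\theta_{\N\subset\M}$ in the proof of Proposition \ref{prop:alphasigmainterm}, so your self-contained version is sound.
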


\begin{proof}
It follows from \cite[\Thm 2.7]{To2009}, where it is shown that $\theta_{\N\subset\P} \prec \theta_{\N\subset\M}$. Thus the rigid \Cstar-tensor category generated by the irreducible components of $\theta_{\N\subset\P}$ is a subcategory of $\C$. In particular, it is unitarily braided with the same braiding. By choosing $\psi' \in \Hom(\iota_{\N\subset\P},\iota_{\N\subset\P} \tau)$ for every $\tau\prec\theta_{\N\subset\P}$, we get a complete system of charged fields.
\end{proof}

\begin{prop}\label{prop:alphasigmainterm}
Let $\N\subset\M$ and $\P$ as before.
For every $\rho\prec\theta_{\N\subset\M}$ and $\beta \prec\theta_{\P\subset\M}$, the linear map:
\begin{align}
\Hom(\alpha^{\pm,\N\subset\P}_\rho,\beta&) \to \Hom(\rho,\sigma^{\N\subset\P}_\beta) \\
t &\mapsto \iotabar_{\N\subset\P}(t)w_{\N\subset\P}
\end{align}
is a bijection.
\end{prop}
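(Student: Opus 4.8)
The statement is the $\alpha\sigma$-reciprocity (Theorem \ref{thm:alphasigmarecip}) for the intermediate subfactor $\N\subset\P$, which is local and discrete by Lemma \ref{lem:NinPisdiscretelocal}. One cannot apply Theorem \ref{thm:alphasigmarecip} verbatim: its hypothesis $\beta\prec\alpha^{\pm}_\sigma$ may fail (already for $\P=\N$), and $\P\subset\M$ is not yet known to be discrete or local. The plan is instead to reduce the intermediate reciprocity to the one for $\N\subset\M$ (Lemma \ref{lem:alphasuNinM}) through the compatibility of $\alpha$-induction with the inclusion $\iota_{\P\subset\M}$.

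First I would record the structural identities coming from $\iota_{\N\subset\M}=\iota_{\P\subset\M}\iota_{\N\subset\P}$ and $\iotabar_{\N\subset\M}=\iotabar_{\N\subset\P}\iotabar_{\P\subset\M}$: namely $\theta_{\N\subset\M}=\sigma^{\N\subset\P}_{\theta_{\P\subset\M}}$ and, by uniqueness (up to a phase) of the standard isometry of an irreducible inclusion together with $E_{\N\subset\M}=E_{\N\subset\P}E_{\P\subset\M}$ (Remark \ref{rmk:haarKabsorbshaarH}), $w_{\N\subset\M}=\iotabar_{\N\subset\P}(w_{\P\subset\M})w_{\N\subset\P}$. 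The key ingredient is the compatibility of $\alpha$-induction with the inclusion,
\[
\alpha^{\pm,\N\subset\M}_\rho\circ\iota_{\P\subset\M}=\iota_{\P\subset\M}\circ\alpha^{\pm,\N\subset\P}_\rho,\qquad(\ast)
\]
for $\rho\in\C$, which says that $\alpha^{\pm,\N\subset\M}_\rho$ restricts to $\alpha^{\pm,\N\subset\P}_\rho$ on $\P$. I would check $(\ast)$ on generators of $\P$: on $\iota_{\N\subset\P}(\N)$ both sides equal $\iota_{\N\subset\M}\rho$ by the extension property, and on a charged field $\psi'\in\Hom(\iota_{\N\subset\P},\iota_{\N\subset\P}\tau)$, which $\iota_{\P\subset\M}$ sends to a charged field in $\Hom(\iota_{\N\subset\M},\iota_{\N\subset\M}\tau)$, both sides equal $\iota_{\N\subset\M}(\varepsilon^{\mp}_{\tau,\rho})\iota_{\P\subset\M}(\psi')$, since $\N\subset\P$ and $\N\subset\M$ share the same braiding and $\alpha^{\pm}_\rho(\psi')=\iota(\varepsilon^{\mp}_{\tau,\rho})\psi'$. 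Discreteness of $\N\subset\P$ ensures these generate $\P$.

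Well-definedness and injectivity then follow exactly as in Lemma \ref{lem:alphasuNinM}, using $\sigma^{\N\subset\P}_{\alpha^{\pm,\N\subset\P}_\rho}=\theta_{\N\subset\P}\rho$ to land in $\Hom(\rho,\sigma^{\N\subset\P}_\beta)$ and faithfulness of $E_{\N\subset\P}$ to rule out a nonzero kernel. For surjectivity, given $s\in\Hom(\rho,\sigma^{\N\subset\P}_\beta)$ I would pick an isometry $v\in\Hom(\beta,\theta_{\P\subset\M})$, so that $\iotabar_{\N\subset\P}(v)s\in\Hom(\rho,\theta_{\N\subset\M})$, and use Lemma \ref{lem:alphasuNinM} for $\N\subset\M$ to obtain $T\in\Hom(\alpha^{\pm,\N\subset\M}_\rho,\id_\M)$ with $\iotabar_{\N\subset\M}(T)w_{\N\subset\M}=\iotabar_{\N\subset\P}(v)s$. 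Setting $t_0:=\iotabar_{\P\subset\M}(T)w_{\P\subset\M}\in\P$, the identity $(\ast)$ and the fact that $\iotabar_{\P\subset\M}$ is a homomorphism give $\iotabar_{\P\subset\M}(T)\in\Hom(\theta_{\P\subset\M}\alpha^{\pm,\N\subset\P}_\rho,\theta_{\P\subset\M})$, whence $t_0\in\Hom(\alpha^{\pm,\N\subset\P}_\rho,\theta_{\P\subset\M})$; moreover the two structural identities above yield $\iotabar_{\N\subset\P}(t_0)w_{\N\subset\P}=\iotabar_{\N\subset\M}(T)w_{\N\subset\M}=\iotabar_{\N\subset\P}(v)s$. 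Then $t:=v^*t_0\in\Hom(\alpha^{\pm,\N\subset\P}_\rho,\beta)$ is the desired preimage, since $\iotabar_{\N\subset\P}(t)w_{\N\subset\P}=\iotabar_{\N\subset\P}(v^*v)s=s$.

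The hard part is the compatibility $(\ast)$, together with the normalization $w_{\N\subset\M}=\iotabar_{\N\subset\P}(w_{\P\subset\M})w_{\N\subset\P}$: both must be established from the discreteness and locality of $\N\subset\P$ and the structure of $\N\subset\M$ alone, \emph{without} presupposing that $\P\subset\M$ is discrete or local. I would also take care that $\alpha^{\pm,\N\subset\P}_\rho$ is meaningful for every $\rho\in\C$, and not merely for $\rho\in\C_{\N\subset\P}$, because an irreducible $\rho\prec\theta_{\N\subset\M}$ need not lie in $\C_{\N\subset\P}$.
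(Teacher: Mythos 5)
Your proposal is correct and follows essentially the same route as the paper's proof: injectivity from faithfulness of $E_{\N\subset\P}$, surjectivity by passing through an isometry $v\in\Hom(\beta,\theta_{\P\subset\M})$ and Lemma \ref{lem:alphasuNinM} for $\N\subset\M$, with the compatibility $\iota_{\P\subset\M}\circ\alpha^{\pm,\N\subset\P}_\rho=\alpha^{\pm,\N\subset\M}_\rho\circ\iota_{\P\subset\M}$ verified on $\iota_{\N\subset\P}(\N)$ and on charged fields (using discreteness of $\N\subset\P$ to generate), and the normalization $w_{\N\subset\M}=\iotabar_{\N\subset\P}(w_{\P\subset\M})w_{\N\subset\P}$ obtained from uniqueness of the expectation and its Connes--Stinespring isometry. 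The two points you flag as delicate are exactly the ones the paper establishes, and only semidiscreteness of $\P\subset\M$ (Lemma \ref{lem:intermsemidiscrete}) is needed for $w_{\P\subset\M}$ to exist, so no circularity arises.
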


\begin{proof}
The map $t\mapsto \iotabar_{\N\subset\P}(t)w_{\N\subset\P}$ sends $\Hom(\alpha^{\pm,\N\subset\P}_\rho,\beta)$ to $\Hom(\rho,\sigma^{\N\subset\P}_\beta)$. It is injective because the expectation $E_{\N\subset\P}$ is faithful. To show surjectivity, let $s\in\Hom(\rho,\sigma^{\N\subset\P}_\beta)$ and choose an isometry $v\in\Hom(\beta,\theta_{\P\subset\M})$. Then $\iotabar_{\N\subset\P}(v) s \in \Hom(\rho,\theta_{\N\subset\M})$, where we used the fact that $\sigma^{\N\subset\P}_{\theta_{\P\subset\M}} = \theta_{\N\subset\M}$.
By Lemma \ref{lem:alphasuNinM}, there is a charged field $\psi\in\Hom(\id_\M, \alpha^{\pm,\N\subset\M}_\rho)$ such that $\iotabar_{\N\subset\M}(\psi^*)w_{\N\subset\M} = \iotabar_{\N\subset\P}(v) s$. 

By Lemma \ref{lem:intermsemidiscrete}, the intermediate inclusion $\P\subset\M$ is semidiscrete. Let $w_{\P\subset\M}$ be the unique isometry in $\Hom(\id_\P,\theta_{\P\subset\M})$ associated via the Connes--Stinespring representation with the unique expectation $E_{\P\subset\M}$. To conclude the proof, we show that $\iotabar_{\P\subset\M}(\psi^*)w_{\P\subset\M} \in \Hom(\alpha^{\pm,\N\subset\P}_\rho,\theta_{\P\subset\M})$ and that $v^*\,\iotabar_{\P\subset\M}(\psi^*)\,w_{\P\subset\M} \in \Hom(\alpha^{\pm,\N\subset\P}_\rho,\beta)$ is sent to $s$ by the map $t \mapsto \iotabar_{\N\subset\P}(t)\, w_{\N\subset\P}$. Consider the following diagram:
\begin{equation}
\begin{tikzcd}
\Hom(\alpha^{\pm,\N\subset\M}_\rho,\id_\M) \arrow[rr, " "] \arrow[d, " "] &  & \Hom(\rho,\theta_{\N\subset\M}) \\
\Hom(\alpha^{\pm,\N\subset\P}_\rho,\theta_{\P\subset\M}) \arrow[d, " v^* \slot "]         &  & \\
\Hom(\alpha^{\pm,\N\subset\P}_\rho,\beta) \arrow[rr, " "]         &  & \Hom(\rho,\sigma^{\N\subset\P}_\beta) \arrow[uu, " \iotabar_{\N\subset\P}(v) \slot "].                                         
\end{tikzcd}
\end{equation}
For every $p\in\P$, compute
\begin{align} 
\iotabar_{\P\subset\M}(\psi^*)\, w_{\P\subset\M}\, \alpha^{\pm,\N\subset\P}_\rho(p) &= \iotabar_{\P\subset\M}(\psi^* \iota_{\P\subset\M}(\alpha^{\pm,\N\subset\P}_\rho(p)))\, w_{\P\subset\M} \\
&= \iotabar_{\P\subset\M}(\psi^* \alpha^{\pm,\N\subset\M}_\rho(\iota_{\P\subset\M}(p)))\, w_{\P\subset\M} \\
&= \iotabar_{\P\subset\M}(\iota_{\P\subset\M}(p)\psi^*)\, w_{\P\subset\M} \\
&= \theta_{\P\subset\M}(p)\, \iotabar_{\P\subset\M}(\psi^*)\, w_{\P\subset\M}
\end{align}
provided we show that $\iota_{\P\subset\M} \alpha^{\pm,\N\subset\P}_\rho = \alpha^{\pm,\N\subset\M}_\rho \iota_{\P\subset\M}$. The latter equality is readily proven by first taking $p\in\iota_{\N\subset\P}(\N)$, namely $p =\iota_{\N\subset\P}(n)$ for some $n\in\N$, and computing
\begin{align}
\alpha^{\pm,\N\subset\M}_\rho (\iota_{\P\subset\M}(p)) &= \alpha^{\pm,\N\subset\M}_\rho (\iota_{\N\subset\M}(n)) \\
&= \iota_{\N\subset\M} (\rho(n)) \\
&= \iota_{\P\subset\M} (\iota_{\N\subset\P}(\rho(n))) \\
&= \iota_{\P\subset\M} (\alpha^{\pm,\N\subset\P}_\rho(p)).
\end{align}
Secondly, take $p = \psi' \in \Hom(\iota_{\N\subset\P},\iota_{\N\subset\P}  \tau)$ for $\tau\prec\theta_{\N\subset\P}$ and observe that $\iota_{\P\subset\M}(\psi')$ belongs to $\Hom(\iota_{\N\subset\M},\iota_{\N\subset\M}  \tau)$. 
By naturality of the braiding,
\begin{align}
\alpha^{\pm,\N\subset\M}_\rho(\iota_{\P\subset\M}(\psi')) &= \iota_{\N\subset\M}(\varepsilon^{\mp}_{\tau,\rho})\, \iota_{\P\subset\M}(\psi') \\
&= \iota_{\P\subset\M}(\iota_{\N\subset\P}(\varepsilon^{\mp}_{\tau,\rho}) \psi') \\
&= \iota_{\P\subset\M}(\alpha^{\pm,\N\subset\P}_{\rho}(\psi')).
\end{align}
The subfactor $\N\subset\P$ is discrete \cite[\Thm 2.7]{To2009}, thus $\iota_{\N\subset\P}(\N)$ and the charged fields $\psi'$ generate $\P$ as a von Neumann algebra. Hence $\iota_{\P\subset\M}  \alpha^{\pm,\N\subset\P}_\rho = \alpha^{\pm,\N\subset\M}_\rho  \iota_{\P\subset\M}$ as desired.

To conclude the proof, set $t := v^*\,\iotabar_{\P\subset\M}(\psi^*)\,w_{\P\subset\M} \in \Hom(\alpha^{\pm,\N\subset\P}_\rho,\beta)$ and compute
\begin{align}
\iotabar_{\N\subset\P}(t)\, w_{\N\subset\P} &= \iotabar_{\N\subset\P}(v^*\,\iotabar_{\P\subset\M}(\psi^*)\,w_{\P\subset\M})\, w_{\N\subset\P} \\
&= \iotabar_{\N\subset\P}(v^*)\, \iotabar_{\N\subset\M}(\psi^*)\, \iotabar_{\N\subset\P}(w_{\P\subset\M})\, w_{\N\subset\P} \\
&= \iotabar_{\N\subset\P}(v^*)\, \iotabar_{\N\subset\M}(\psi^*)\, w_{\N\subset\M} \\
&= \iotabar_{\N\subset\P}(v^*)\, \iotabar_{\N\subset\P}(v)\, s = s 
\end{align}
by observing that $\iotabar_{\N\subset\P}(w_{\P\subset\M}) w_{\N\subset\P} \in \N$, that it is an isometry and it belongs to $\Hom(\id_\N, \theta_{\N\subset\M})$, and by uniqueness of the conditional expectation $E_{\N\subset\M}$ and of its associated isometry $w_{\N\subset\M}$. 
\end{proof}

\begin{thm}\label{thm:intermdiscretelocal}
Let $\N\subset\M$ be an irreducible local discrete type $\III$ subfactor. For every intermediate von Neumann algebra $\N\subset\P\subset\M$, the subfactors $\N\subset\P$ and $\P\subset\M$ are type $\III$ irreducible discrete and local.
\end{thm}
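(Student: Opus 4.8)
The plan is to reduce everything to the single remaining assertion that $\P\subset\M$ is discrete and local. By Lemma \ref{lem:intermsemidiscrete} the algebra $\P$ is a type $\III$ factor and both $\N\subset\P$ and $\P\subset\M$ are irreducible and semidiscrete, while by Lemma \ref{lem:NinPisdiscretelocal} the subfactor $\N\subset\P$ is discrete and local. So I would devote the whole argument to $\P\subset\M$, the guiding idea being to control the dual canonical endomorphism $\theta_{\P\subset\M}\in\End(\P)$ by means of the $\alpha$-induction of $\N\subset\P$, using Proposition \ref{prop:alphasigmainterm} together with the intertwining relation $\iota_{\P\subset\M}\,\alpha^{\pm,\N\subset\P}_\rho = \alpha^{\pm,\N\subset\M}_\rho\,\iota_{\P\subset\M}$ established in its proof.

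First I would prove discreteness of $\P\subset\M$. Let $\beta\prec\theta_{\P\subset\M}$ be irreducible. Since $\iotabar_{\N\subset\P}$ is a unital normal $*$-homomorphism, an isometry $v\in\Hom(\beta,\theta_{\P\subset\M})$ gives an isometry $\iotabar_{\N\subset\P}(v)\in\Hom(\sigma^{\N\subset\P}_\beta,\theta_{\N\subset\M})$, using $\sigma^{\N\subset\P}_{\theta_{\P\subset\M}}=\theta_{\N\subset\M}$; hence there is an irreducible $\rho\prec\theta_{\N\subset\M}$ with $\rho\prec\sigma^{\N\subset\P}_\beta$, and Proposition \ref{prop:alphasigmainterm} yields $\Hom(\alpha^{\pm,\N\subset\P}_\rho,\beta)\cong\Hom(\rho,\sigma^{\N\subset\P}_\beta)\neq 0$, so $\beta\prec\alpha^{\pm,\N\subset\P}_\rho$. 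As $\N\subset\P$ is local discrete, $d(\alpha^{\pm,\N\subset\P}_\rho)=d(\rho)<\infty$, so every such $\beta$ is finite dimensional. To produce a complete system of charged fields I would take the charged fields $\psi\in H_\rho$ of $\N\subset\M$, which by Lemma \ref{lem:alphasuNinM} satisfy $\psi\in\Hom(\id_\M,\alpha^{\pm,\N\subset\M}_\rho)$; by the intertwining relation above this gives $\psi\in\Hom(\iota_{\P\subset\M},\iota_{\P\subset\M}\alpha^{\pm,\N\subset\P}_\rho)$, so for isometries $u_\beta\in\Hom(\beta,\alpha^{\pm,\N\subset\P}_\rho)$ the components $\iota_{\P\subset\M}(u_\beta)^*\psi$ are charged fields for $\P\subset\M$. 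Since the $\psi$ generate $\M$ over $\N\subset\P$ and $\psi=\sum_\beta\iota_{\P\subset\M}(u_\beta)\,\iota_{\P\subset\M}(u_\beta)^*\psi$, these charged fields and $\P$ generate $\M$, which together with finite dimensionality yields discreteness by the characterization recalled in Section \ref{sec:prelim} (see \cite{IzLoPo1998}).

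Next I would construct the braiding. The computation of the previous step works identically for both signs, and, crucially, Proposition \ref{prop:alphasigmainterm} identifies $\Hom(\alpha^{+,\N\subset\P}_\rho,\beta)$ and $\Hom(\alpha^{-,\N\subset\P}_\rho,\beta)$ with the \emph{same} space $\Hom(\rho,\sigma^{\N\subset\P}_\beta)$; hence for each irreducible $\beta\prec\theta_{\P\subset\M}$ one has $\beta\prec\alpha^{+,\N\subset\P}_\rho$ if and only if $\beta\prec\alpha^{-,\N\subset\P}_\rho$, so $\beta$ lies in the ambichiral subcategory $\alpha^{+,\N\subset\P}(\C)\cap\alpha^{-,\N\subset\P}(\C)\subset\End(\P)$. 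As in the finite index case \cite{BcEv1998-I}, \cite{BcEvKa1999}, the ambichiral subcategory of a braided subfactor is a rigid \Cstar-tensor category carrying a unitary braiding induced by $\varepsilon$, and the category $\C_{\P\subset\M}$ generated by the irreducibles of $\theta_{\P\subset\M}$ is a tensor subcategory of it, hence inherits a unitary braiding; this makes $\P\subset\M$ braided.

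Finally I would verify locality of $\P\subset\M$, and this is where I expect the main obstacle. With the explicit charged fields $\xi\in\Hom(\iota_{\P\subset\M},\iota_{\P\subset\M}\beta)$ and $\xi'\in\Hom(\iota_{\P\subset\M},\iota_{\P\subset\M}\beta')$ obtained above as components of charged fields $\psi\in H_\rho$, $\psi'\in H_\sigma$ of $\N\subset\M$, I would compute the products $\xi\xi'$ and $\xi'\xi$ and compare them through the inherited braiding, reducing the required identity $\iota_{\P\subset\M}(\varepsilon^{\pm}_{\beta',\beta})\,\xi'\xi=\xi\xi'$ to the locality relation $\iota_{\N\subset\M}(\varepsilon^{\pm}_{\sigma,\rho})\,\psi'\psi=\psi\psi'$ of $\N\subset\M$ together with naturality of the braiding. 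The delicate point is to express the ambichiral braiding $\varepsilon^{\pm}_{\beta',\beta}$ of $\C_{\P\subset\M}$ in terms of $\varepsilon^{\pm}_{\rho,\sigma}$ along the isometries $u_\beta,u_{\beta'}$ and to check compatibility with the direct sum decompositions, so that locality of $\N\subset\M$ descends to locality of $\P\subset\M$; once this bookkeeping is carried out, the theorem follows by combining it with the reductions above.
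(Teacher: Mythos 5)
Your proposal follows essentially the same route as the paper's proof: reduce everything to $\P\subset\M$ via Lemmas \ref{lem:intermsemidiscrete} and \ref{lem:NinPisdiscretelocal}, use Proposition \ref{prop:alphasigmainterm} to embed each irreducible $\beta\prec\theta_{\P\subset\M}$ into $\alpha^{\pm,\N\subset\P}_\rho$ for both signs (yielding $d(\beta)\leq d(\rho)<\infty$, discreteness, and ambichirality), and then transport the braiding and the locality relation of $\N\subset\M$ through the isometries in $\Hom(\beta,\alpha^{\pm,\N\subset\P}_\rho)$. The one step you leave as ``bookkeeping'' is exactly what the paper makes explicit via the B\"ockenhauer--Evans relative braiding formula $\varepsilon^{\pm,\mathrm{rel}}_{\beta_1,\beta_2}=t_2^*\,\alpha^{\mp,\N\subset\P}_{\rho_2}(t_1^*)\,\iota_{\N\subset\P}(\varepsilon^{\pm}_{\rho_1,\rho_2})\,\alpha^{\pm,\N\subset\P}_{\rho_1}(t_2)\,t_1$, with which the required identity $\iota_{\P\subset\M}(\varepsilon^{\pm,\mathrm{rel}}_{\beta_1,\beta_2})\psi_1\psi_2=\psi_2\psi_1$ reduces in a few lines to locality of $\N\subset\M$ applied to $\iota(t_1)\psi_1$ and $\iota(t_2)\psi_2$.
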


\begin{proof}
By Lemma \ref{lem:intermsemidiscrete} and Lemma \ref{lem:NinPisdiscretelocal}, it remains only to show that $\P\subset\M$ is discrete and local.
 
To show discreteness of $\P\subset\M$, observe first that $\theta_{\P\subset\M}$ admits a direct sum decomposition into irreducibles. This is because $\Hom(\theta_{\P\subset\M},\theta_{\P\subset\M}) \cong \P' \cap \M_1^\P$, where $\M_1^\P := J_\M \P' J_\M$ is the Jones extension of $\M$ given by $\P$, and $\P' \cap \M_1^\P \subset \N' \cap \M_1^\N$, where $\M_1^\N := J_\M \N' J_\M$, and $\N' \cap \M_1^\N$ is a direct sum of finite matrix algebras by irreducibility and discreteness of $\N\subset\M$ \cite[\Thm 3.3]{IzLoPo1998}. We have to show that the irreducible components $\beta \prec \theta_{\P\subset\M}$ have finite dimension.
By applying Proposition \ref{prop:alphasigmainterm} to $\beta \prec \theta_{\P\subset\M}$, and $\rho\prec\sigma^{\N\subset\P}_\beta\prec\sigma^{\N\subset\P}_{\theta_{\P\subset\M}}=\theta_{\N\subset\M}$ irreducible, hence $d(\rho)<\infty$, we have that the linear bijection:
$$\Hom(\alpha^{\pm,\N\subset\P}_{\rho},\beta) \to \Hom(\rho,\sigma^{\N\subset\P}_\beta)$$
guarantees the existence of isometries in $\Hom(\beta, \alpha^{\pm,\N\subset\P}_{\rho})$. Thus $\beta \prec \alpha^{\pm,\N\subset\P}_{\rho}$ and 
$$d(\beta) \leq d(\alpha^{\pm,\N\subset\P}_{\rho}) = d(\rho) < \infty.$$ 

To show locality of $\P\subset\M$, take $\psi_1 \in \Hom(\iota_{\P\subset\M}, \iota_{\P\subset\M}  \beta_1)$, $\psi_2 \in \Hom(\iota_{\P\subset\M}, \iota_{\P\subset\M}  \beta_2)$ for $\beta_1,\beta_2\prec\theta_{\P\subset\M}$ irreducible, together with the previously mentioned isometries $t_1\in \Hom(\beta_1,\alpha^{\pm,\N\subset\P}_{\rho_1})$, $t_2\in \Hom(\beta_2,\alpha^{\pm,\N\subset\P}_{\rho_2})$ for $\rho_1\prec\sigma^{\N\subset\P}_{\beta_1}$, $\rho_2\prec\sigma^{\N\subset\P}_{\beta_2}$ irreducible. Recall that both $\rho_1,\rho_2\prec\theta_{\N\subset\M}$. Consider the relative braiding between $\beta_1$ and $\beta_2$ introduced in \cite[\Lem 3.11]{BcEv1999-III}, namely
$$\varepsilon^{\pm,\text{rel}}_{\beta_1,\beta_2} := t_2^*\, \alpha^{\mp,\N\subset\P}_{\rho_2}(t_1^*)\, \iota_{\N\subset\P}(\varepsilon^{\pm}_{\rho_1,\rho_2})\, \alpha^{\pm,\N\subset\P}_{\rho_1}(t_2)\, t_1$$
which is independent of the choice of $\rho_1, \rho_2$ and $t_1,t_2$. The finite index assumption made in \cite{BcEv1999-III} is in fact not needed, only discreteness is needed, \cf \cite[\Sec 2]{BcEvKa2000}.
By Corollary \cite[\Cor 3.13]{BcEv1999-III}, the family $\{\varepsilon^{\pm,\text{rel}}_{\beta_1,\beta_2} \in \Hom(\beta_1\beta_2,\beta_2\beta_1), \beta_1,\beta_2\prec\theta_{\P\subset\M}\}$ extends to a unitary braiding on the rigid \Cstar-tensor category generated by the irreducible components of $\theta_{\P\subset\M}$. We have to show that
$$\iota_{\P\subset\M}(\varepsilon^{\pm,\text{rel}}_{\beta_1,\beta_2})\, \psi_1\psi_2 = \psi_2\psi_1$$
for every $\psi_1$, $\psi_2$ as above. Denote for short $\iota_{\P\subset\M}$ by $\iota$ and compute
\begin{align}
\iota(\varepsilon^{\pm,\text{rel}}_{\beta_1,\beta_2})\, \psi_1\psi_2 &= \iota(t_2^*\, \alpha^{\mp,\N\subset\P}_{\rho_2}(t_1^*)\,\iota_{\N\subset\P}(\varepsilon^{\pm}_{\rho_1,\rho_2})\, \alpha^{\pm,\N\subset\P}_{\rho_1}(t_2)\, t_1)\, \psi_1\psi_2 \\
&= \iota(\beta_2(t_1))^* \iota(t_2)^* \iota_{\N\subset\M}(\varepsilon^{\pm}_{\rho_1,\rho_2})\, \iota(t_1)\psi_1\, \iota(t_2)\psi_2 \\
&= \iota(\beta_2(t_1))^* \iota(t_2)^* \iota(t_2)\psi_2 \, \iota(t_1)\psi_1 \\
&= \psi_2 \psi_1
\end{align}
where we used the fact that $\iota(t_1)\psi_1 \in \Hom(\iota_{\N\subset\M}, \iota_{\N\subset\M}  \rho_1)$, $\iota(t_2)\psi_2 \in \Hom(\iota_{\N\subset\M}, \iota_{\N\subset\M}  \rho_2)$, locality of $\N\subset\M$ and $t_1^*t_1 = 1$, $t_2^*t_2 = 1$. Thus the proof is complete.
\end{proof}

\section{Galois correspondence}\label{sec:Galois}

Combining Theorem \ref{thm:intermdiscretelocal} with \cite[\Thm 4.51]{BiDeGi2021}, we give a Galois-type correspondence between intermediate von Neumann algebras $\N \subset \cP \subset \M$ and closed subhypergroups of $K(\N\subset\M)$ considered in Definition \ref{def:subfcpthyp}. The following definition should be compared with \cite[\Def 1.5.1]{BlHe1995} in the DJS hypergroup setting. 

\begin{defi}
Let $K$ be a compact hypergroup in the sense of Definition \ref{def:abstractcpthyp}.
A \textbf{closed subhypergroup} of $K$ is a closed subset $H\subset K$ which is closed under the operations of $K$, namely
$\delta_x\ast\delta_y \in P(H)$, $x^\sharp \in H$ for every $x,y\in H$, $e\in H$, and which admits a Haar measure in $P(H)$ fulfilling $(iii)$ in Definition \ref{def:abstractcpthyp}.
\end{defi}

Recall that $K(\N\subset \M)$ acts faithfully and minimally on $\M$ \cite[\Def 5.1, \Thm 5.7]{BiDeGi2021} and that the fixed point subalgebra $\M^{K(\N\subset \M)}$ coincides with $\N$.

\begin{thm}\label{thm:galoiscorresp}
Let $\N\subset \M$ be an irreducible local discrete type $\III$ subfactor. Denote $K(\N\subset\M)$ simply by $K$. There is a bijective correspondence between the intermediate von Neumann algebras $\N\subset\P\subset\M$ and the closed subhypergroups $H \subset K$ given by 
$$H \mapsto \M^H, \quad \P \mapsto \{\phi \in K : \phi_{\restriction \iota(\P)} = \id_\P\}$$
such that $H = K(\P\subset\M)$.
\end{thm}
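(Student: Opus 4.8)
The plan is to show that the two assignments $\Phi\colon H \mapsto \M^H$ and $\Psi\colon \P \mapsto \{\phi \in K : \phi_{\restriction \iota(\P)} = \id_\P\}$ are well defined and mutually inverse, and that $\Psi(\P) = K(\P\subset\M)$. I would begin with $\Psi$. For a ucp map $\phi$ on $\M$, the multiplicative domain argument shows that fixing $\iota(\P)$ pointwise forces $\P$-bimodularity: if $\phi(p)=p$ for all $p\in\P$, then Kadison--Schwarz gives $\phi(p^*p)=p^*p=\phi(p)^*\phi(p)$, so $\iota(\P)$ lies in the multiplicative domain of $\phi$ and $\phi(\iota(p)x\iota(q))=\iota(p)\phi(x)\iota(q)$. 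Hence $\Psi(\P)=K\cap\UCP_\P(\M)$. Next I would prove that $\UCP_\P(\M)$ is a \emph{face} of $\UCP_\N(\M)$: if $\phi=t\phi_1+(1-t)\phi_2$ with $\phi\in\UCP_\P(\M)$, $\phi_i\in\UCP_\N(\M)$ and $t\in(0,1)$, then for a unitary $u\in\P$ one has $u=t\phi_1(u)+(1-t)\phi_2(u)$ with $\|\phi_i(u)\|\le 1$; since a unitary is an extreme point of the closed unit ball of $\M$, $\phi_i(u)=u$, and as $\P$ is spanned by its unitaries, $\phi_i\in\UCP_\P(\M)$. Using $\Extr(F)=F\cap\Extr(C)$ for a face $F$ of a convex set $C$, this yields $$\Psi(\P)=\Extr(\UCP_\N(\M))\cap\UCP_\P(\M)=\Extr(\UCP_\P(\M))=K(\P\subset\M),$$ which is a compact hypergroup by Theorem \ref{thm:intermdiscretelocal} and Definition \ref{def:subfcpthyp}. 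Its closedness in $K$ is clear, and the operations of $K$ restrict to it: convolution corresponds to composition of ucp maps, which preserves $\P$-bimodularity; the involution is the adjoint with respect to a state that can be chosen invariant under both $E_{\N\subset\M}$ and $E_{\P\subset\M}$ using $E_{\N\subset\M}E_{\P\subset\M}=E_{\N\subset\M}$ from Remark \ref{rmk:haarKabsorbshaarH}; and the Haar measure of $K(\P\subset\M)$ supplies the invariant measure. That $\delta_\phi\ast\delta_\psi$ is supported on $\Psi(\P)$ for $\phi,\psi\in\Psi(\P)$ follows because its barycenter $\phi\circ\psi$ lies in the closed face $\UCP_\P(\M)$, and a probability measure whose barycenter lies in a closed face is supported there.

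Next I would verify that $\Phi$ is well defined. Given a closed subhypergroup $H$ with Haar measure $\mu_H$, the average $E_H:=\int_H\phi\,\dd\mu_H(\phi)$ is an $\N$-bimodular ucp map; left and right invariance of $\mu_H$ give $\phi\circ E_H=E_H\circ\phi=E_H$ for $\phi\in H$, and idempotency $\mu_H\ast\mu_H=\mu_H$ gives $E_H^2=E_H$. Thus $E_H$ is a normal faithful conditional expectation whose range is exactly its fixed-point set $\M^H$, so $\M^H$ is a von Neumann algebra with $\N\subset\M^H\subset\M$.

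Finally I would show that $\Phi$ and $\Psi$ are inverse to each other. For $\Phi\circ\Psi$, since $\Psi(\P)=K(\P\subset\M)$ and $\P\subset\M$ is irreducible local discrete by Theorem \ref{thm:intermdiscretelocal}, minimality of the canonical hypergroup action \cite[\Thm 5.7]{BiDeGi2021} gives $\M^{\Psi(\P)}=\M^{K(\P\subset\M)}=\P$. For $\Psi\circ\Phi$, the inclusion $H\subseteq\Psi(\M^H)=K(\M^H\subset\M)$ is immediate, since every $\phi\in H$ fixes $\M^H$. The reverse inclusion is the main obstacle, which I would resolve by comparing Haar measures. Both $E_H$ and the average over $K(\M^H\subset\M)$ are normal faithful conditional expectations of $\M$ onto $\M^H$, which is unique because $\M^H\subset\M$ is irreducible (Theorem \ref{thm:intermdiscretelocal}); hence they coincide. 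Under the duality homeomorphism of Theorem \ref{thm:dualityishomeo}, $E_H$ corresponds to $\mu_H$ and the latter expectation to the Haar measure of $K(\M^H\subset\M)$, both regarded as measures on $K$; injectivity of the duality forces these two measures to be equal. Since a Haar measure on a compact hypergroup is faithful, hence of full support, the supports of these measures are $H$ and $K(\M^H\subset\M)$ respectively, whence $H=K(\M^H\subset\M)=\Psi(\Phi(H))$. Together with $\Psi(\P)=K(\P\subset\M)$, this proves the bijectivity and the full statement.
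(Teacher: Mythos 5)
Your argument is correct and follows the same overall skeleton as the paper's proof (identify $\Psi(\P)$ with $K(\P\subset\M)$ via Theorem \ref{thm:intermdiscretelocal}, verify the closed subhypergroup axioms, and recover $\P$ from $H$ by the fixed-point theorem for $\P\subset\M$), but you replace the two external inputs from \cite{BiDeGi2021} by self-contained arguments, which is a genuine difference in route. Where the paper quotes \cite[Lem.\ 4.49]{BiDeGi2021} to see that extreme points of $\UCP_\P(\M)$ remain extreme in the larger convex set, you prove directly that $\UCP_\P(\M)$ is a closed face of $\UCP_\N(\M)$ (unitaries are extreme in the unit ball, $\P$ is spanned by its unitaries, multiplicative domain) and use $\Extr(F)=F\cap\Extr(C)$; this face structure also buys you, via the barycenter-in-a-closed-face lemma, a clean justification of $\delta_{\phi_1}\ast\delta_{\phi_2}\in P(H)$, a point the paper passes over rather quickly. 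Where the paper invokes the uniqueness statement for minimal compact hypergroup actions \cite[Prop.\ 5.4]{BiDeGi2021} to conclude $H=K(\M^H\subset\M)$, you instead compare the two normal faithful conditional expectations onto $\M^H$ (unique by irreducibility of the intermediate inclusion), transport the identity through the injective duality of Theorem \ref{thm:dualityishomeo}, and use that Haar measures have full support; this is a nice, more hands-on substitute. The one step you should make explicit is why $\M^H$ is a von Neumann \emph{subalgebra} rather than just an operator system: ``range of an idempotent ucp map equals its fixed points'' does not by itself give multiplicativity. One way to close this is to note that $E\circ\phi=E$ for every $\phi\in\UCP_\N(\M)$ (the Haar measure $\mu_E$ absorbs under convolution), so every $\phi\in H$ preserves the faithful $E$-invariant state $\omega$; then for $x\in\M^H$ the Kadison--Schwarz inequality $\phi(x^*x)\geq\phi(x)^*\phi(x)=x^*x$ combined with $\omega(\phi(x^*x)-x^*x)=0$ forces $\phi(x^*x)=x^*x$, so $\M^H$ is a weakly closed $*$-subalgebra and $E_H$ is indeed a conditional expectation onto it.
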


\begin{proof}
Given an intermediate von Neumann algebra $\P$, by Theorem \ref{thm:intermdiscretelocal}, $\P\subset\M$ is discrete and local. Let $H := \{\phi \in K : \phi_{\restriction \iota(\P)} = \id_\P\}$. Then $H = K(\P\subset\M)$. Indeed, every $\phi\in H$ is an extreme point of $\UCP_\P(\M)$, thus $\phi\in K(\P\subset\M)$.
Vice versa, every $\phi\in K(\P\subset\M)$ is extreme in $\UCP(\M)$ by \cite[\Lem 4.49]{BiDeGi2021}, thus in $\UCP_\N(\M)$, and $\phi\in H$.
It follows that $H$ is a closed subhypergroup of $K$ with the same convolution and involution on probability measures and with the same identity element.
Indeed, $\delta_{\phi_1}\ast\delta_{\phi_2} \in P(H)$ for every $\phi_1, \phi_2 \in H$, as the convolution is defined by the composition of ucp maps, and $\phi_1 \circ \phi_2\in\UCP_\P(\M)$ if $\phi_1, \phi_2 \in K(\P\subset\M)$.
Moreover, every $E_{\N\subset\M}$-invariant state on $\M$ is also $E_{\P\subset\M}$-invariant by Remark \ref{rmk:haarKabsorbshaarH}, thus by Lemma \ref{lem:uniqueinvolutionlocal} the involution on $K(\P\subset\M)$ as defined in Definition \ref{def:Omegaadj} agrees with the involution of $K$ restricted to $H$. 

Vice versa, given a closed subhypergroup $H$, by definition it acts faithfully and minimally on $\M$.
Let $\P := \M^H$. Again by Theorem \ref{thm:intermdiscretelocal} and by the uniqueness statement for compact hypergroup actions \cite[\Prop 5.4]{BiDeGi2021}, we conclude that $H = K(\P\subset\M)$ 
\end{proof}

\begin{rmk}
The previous theorem generalizes the Galois correspondence established in \cite[\Prop 4.13]{Bi2016} from finite to infinite index subfactors.

It generalizes also \cite[\Thm 3.15]{IzLoPo1998} from minimal actions of compact groups to hypergroups, as \emph{every} compact group fixed point irreducible subfactor $\M^G \subset \M$ is local with respect to the symmetric braiding coming from $\Rep(G)$ \cite[\Prop 9.3]{BiDeGi2021}.
\end{rmk}

\section{Fourier transform}\label{sec:Fourier}

Let $\xi\in\Hil$ be a jointly cyclic and separating unit vector for $\N$ and $\M$. Denote by $\gamma\in\End(\M)$, $\theta\in\End(\N)$ and $\gamma_1\in\End(\M_1)$ the associated canonical endomorphisms as in Section \ref{sec:canendo}.

Recall that the beginning of the Jones tower/tunnel in the infinite factor setting reads:
$$\theta(\N) \subset \gamma(\M) \subset \gamma_1(\M_1) = \N \subset \M \subset \M_1.$$ 

The subfactor theoretical Fourier transform has been introduced in the finite index type $\II_1$ setting by Ocneanu \cite{Oc1988}, \cite[\Sec II.7]{Oc1991}.  
It can be defined using Jones projections and conditional expectations in the tower \cite[\Def 2.16]{Bi1997}, \cite[\Sec 3]{Sa1997}, or graphically in the language of planar algebras as a map running between $n$-box spaces \cite[\Sec 3]{BiJo2000}. We shall be interested in the case $n=2$. We recall below the description of $2$-box spaces (and in the next section of the Fourier transform) in terms of the canonical endomorphisms.
Note that the Fourier transform in the infinite factor setting is defined naturally for semidiscrete subfactors, not necessarily discrete nor with finite index.

Let $H := {}_\M{L^2\M}_\N$ be the standard $\M$-$\N$ bimodule associated with $\N\subset\M$. Let $\bar H$ be the conjugate $\N$-$\M$ bimodule. Denote by $\boxtimes$ the Connes fusion relative tensor product \cite{Sa1983}. Then $\bar H \boxtimes_\M H \cong {}_\N{L^2\M}_\N \cong {}_\N{{}_\theta L^2 \N}_\N$ in the notation of \cite[\Sec 2.2]{Lo2018} and $H \boxtimes_\N \bar H \cong {}_\M{L^2\M_1}_\M \cong {}_\M{{}_\gamma L^2 \M}_\M$. \Cf \cite[\Prop 3.1, 3.2]{Bi1997} and \cite[\Thm 2.50]{JoPe2011}, \cite[\Thm 5.4]{DaGhGu2014} in the type $\II_1$ setting. The bimodule intertwiner algebras are then identified with the higher (in this case 2-step) relative commutants:
\begin{align}
\Hom_{\N\text{-}\N} (\bar H \boxtimes_\M H, \bar H \boxtimes_\M H) &\cong \Hom(\theta,\theta) \\
\Hom_{\M\text{-}\M} (H \boxtimes_\N \bar H, H \boxtimes_\N \bar H) &\cong \Hom(\gamma,\gamma)
\end{align}
where $\Hom(\gamma,\gamma) = \gamma(\M)' \cap \M$ and $\Hom(\theta,\theta) = \theta(\N)' \cap \N$ by definition.

\subsection{Extension of the Fourier transform to $\UCP$ maps}\label{sec:FourierUCP}

In this section, let $\N\subset\M$ be an irreducible semidiscrete ($\cE(\M,\N) \neq \emptyset$) type $\III$ subfactor. Denote by $E$ the unique element in $\cE(\M,\N)$. Recall the notation $\UCP_\N(\M)$ for the $\N$-bimodular ucp maps $\M\to\M$.

\begin{notation}
For ease of notation, we omit $\iota$ symbols and write either just $\gamma (= \iota\iotabar)$ or $\theta (= \iotabar\iota)$ in place of $\iotabar$ when applied either to elements in $\M$ or in $\iota(\N)$, identified with $\N$.
\end{notation}

\begin{defi}\label{def:subfF}
The subfactor theoretical Fourier transform \footnote{It is graphically described by a $90^{\circ}$-rotation: $x\in\Hom(\iota\iotabar,\iota\iotabar) \mapsto \iotabar\iota(w^*) \iotabar(x) w \in \Hom(\iotabar\iota,\iotabar\iota)$ as $w\in\Hom(\id_\N,\iotabar\iota)$.} is defined by 
\begin{align}
\cF : \Hom(\gamma,&\gamma) \to \Hom(\theta,\theta)\\
x &\mapsto \theta(w)^* \gamma(x)w.
\end{align}
\end{defi}

As observed in \cite[\Sec 3]{NiWi1995} for irreducible semidiscrete and not necessarily depth 2 subfactors:

\begin{prop}
The subfactor theoretical Fourier transform $\cF$ is injective.
\end{prop}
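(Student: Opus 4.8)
The plan is to reduce the injectivity of $\cF$ to the faithfulness of $E$ together with the uniqueness of the Radon--Nikodym density for completely positive maps. Since $\iota$ is literally the inclusion $\N\subset\M$ of von Neumann algebras acting on the same $\Hil$ (which is why we suppress it), the operators $\theta(w)=\iotabar\iota(w)$ and $\gamma(w)=\iota\iotabar(w)$ coincide, so for $x\in\Hom(\gamma,\gamma)$ one may rewrite
\begin{align}
\cF(x)=\theta(w)^*\gamma(x)\,w=\gamma(w)^*\gamma(x)\,w=\gamma(w^*x)\,w,
\end{align}
using that $\gamma$ is a $*$-homomorphism. First I would record the elementary but crucial fact that, for any $b\in\M$, one has $\gamma(b)\,w=0$ if and only if $b=0$: indeed $E(b^*b)=w^*\gamma(b^*b)\,w=(\gamma(b)\,w)^*(\gamma(b)\,w)$, so $\gamma(b)\,w=0$ forces $E(b^*b)=0$, whence $b=0$ by faithfulness of $E$. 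Applying this with $b=w^*x$ yields at once that $\cF(x)=0$ if and only if $w^*x=0$.

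The second step turns the vanishing $w^*x=0$ into a statement about $\N$-bimodular maps. Because $x\in\Hom(\gamma,\gamma)$, the Radon--Nikodym form \eqref{eq:RNphi} produces the map $\phi_x:=w^*x\,\gamma(\slot)\,w$, and since $w^*x$ is a left factor, $w^*x=0$ gives $\phi_x=0$ identically. It then remains to invoke the injectivity of the assignment $x\mapsto\phi_x$ on $\Hom(\gamma,\gamma)$ --- that is, the \emph{uniqueness} of the bounded density in the Radon--Nikodym theorem \cite{Ar1969} --- to conclude $x=0$. Decomposing a general $x$ into its self-adjoint and then positive and negative parts (each of which yields a map dominated by a multiple of $E$, since $0\le x_\pm\le\|x_\pm\|\,1$ implies $\phi_{x_\pm}\le\|x_\pm\|\,E$) reduces this injectivity to the uniqueness statement for genuinely $E$-dominated completely positive maps.

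The heart of the matter, and the step I expect to be the main obstacle, is precisely this uniqueness of the Radon--Nikodym density. It is equivalent to the minimality of the Connes--Stinespring representation $E=w^*\gamma(\slot)\,w$, i.e.\ to the cyclicity of $w\Hil$ for $\gamma(\M)=j_\N(\M')$, and in particular to the absence of a nonzero $y\in\Hom(\gamma,\gamma)$ with $y\,w=0$. When the index is finite this is immediate, since a Pimsner--Popa basis expands the unit and lets one recover $x$ from $\cF(x)$; in the merely semidiscrete case no such basis is available, and one must argue directly. Here I would use faithfulness of $E$ together with irreducibility and the relation $ww^*=\gamma_1(e_\N)$, which identifies $ww^*$ with the image under $\gamma_1$ of the Jones projection, reducing the cyclicity to a support computation in the factors $\M$ and $\M'$; alternatively one simply appeals to the foundational uniqueness already underlying the Radon--Nikodym correspondence \eqref{eq:RNphi}. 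This is exactly the observation attributed to \cite{NiWi1995}, and it is the only place where the infinite-index subtleties genuinely enter.
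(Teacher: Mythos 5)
Your first two steps are correct and complete: using $\theta(w)=\gamma(w)$ for $w\in\N$ to rewrite $\cF(x)=\gamma(w^*x)w$, and then the identity $E(b^*b)=(\gamma(b)w)^*(\gamma(b)w)$ together with faithfulness of $E$ to get $\cF(x)=0\Leftrightarrow w^*x=0$, is exactly the right reduction. The paper itself offers no argument here beyond the citation to \cite{NiWi1995}, so there is nothing to compare line by line, but this is surely the substance of that observation.

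The one soft spot is the last implication, $w^*x=0\Rightarrow x=0$ for $x\in\Hom(\gamma,\gamma)$. You route it through the uniqueness of the Arveson density, i.e.\ the minimality of the representation $(\gamma,w,\Hil)$, correctly identify this as the crux, and then offer only a sketch (``a support computation in the factors'') or an appeal to the very uniqueness in question; as written this step is not closed. It can be closed without any minimality statement. For $z\in\Hom(\gamma,\gamma)$ both $w^*zw$ and $E(z)$ are scalars: $w^*zw\in\N'\cap\M=\CC 1$ because $w^*z\theta(n)w=w^*\theta(n)zw$ for $n\in\N$, and $E(z)\in\gamma(\M)'\cap\N\cong\M'\cap\M_1\cong\N'\cap\M=\CC 1$ by $\N$-bimodularity of $E$ and irreducibility. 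Applying $E$ to $w^*zw$ and using bimodularity then forces $w^*zw=w^*E(z)w=E(z)$; this is precisely the paper's Lemma \ref{lem:Estate}, whose proof uses only irreducibility and so is available already in the semidiscrete setting of this proposition. Now $w^*x=0$ gives $w^*xx^*w=0$, hence $E(xx^*)=0$, hence $x=0$ by faithfulness of $E$. This keeps the entire proof at the level of your first step --- faithfulness of $E$ plus irreducibility --- and avoids both the Radon--Nikodym uniqueness and the cyclicity of $\gamma(\M)w\Hil$.
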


For every $\phi\in\UCP_\N(\M)$, define $V_\phi$ as the closure of the operator:
\begin{align}\label{eq:defV}
V_\phi y \Omega= \phi(y) \Omega,\quad y\in\M
\end{align}
where $\Omega\in \Hil$ is a cyclic and separating unit vector for $\M$ such that the associated state $\omega = (\Omega , \slot \Omega)$ on $\M$ is $E$-invariant. In particular, $V_E$ is the Jones projection $e_\N$ for $\N\subset\M$ with respect to $E$. As in \cite[\Sec 2]{NiStZs2003}, \cite[\Sec 2.5]{BiDeGi2021}, it follows that $V_\phi\in\B(\Hil)$, $\|V_\phi\| = 1$ and $V_\phi \Omega = \Omega$. 

\begin{lem}
The operator $V_\phi$ depends only on $\phi$ and on the positive cone of $\Omega$. \footnote{From now on the vectors $\Omega$ and $\xi$ will be chosen in the same positive cone with respect to $\M$.}
\end{lem}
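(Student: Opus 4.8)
The plan is to realise $V_\phi$ as the canonical implementation of $\phi$ attached to the standard form of $\M$, and then to strip away the apparent dependence on the particular vector $\Omega$, keeping only the dependence on $\phi$ and the cone. First I would record, in an $\Omega$-free way, why $V_\phi$ is a well-defined contraction. Since $\phi$ is $\N$-bimodular we have $\phi_{\restriction\iota(\N)} = \id$, and $E\circ\phi$ is a unital completely positive $\N$-bimodular idempotent onto $\iota(\N)$, hence a conditional expectation; by uniqueness of $E$ (irreducibility) $E\circ\phi = E$, and likewise $\phi\circ E = E$. Thus any $E$-invariant state satisfies $\omega\circ\phi = \omega$, and Kadison--Schwarz gives $\|\phi(y)\Omega\|^2 = \omega(\phi(y)^*\phi(y)) \le \omega(\phi(y^*y)) = \omega(y^*y) = \|y\Omega\|^2$, so $V_\phi$ extends to a contraction with $V_\phi\Omega = \Omega$.

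Next I would localise $V_\phi$ inside an algebra built only from cone-canonical data. Left $\N$-bimodularity gives $V_\phi\iota(n)x\Omega = \phi(\iota(n)x)\Omega = \iota(n)\phi(x)\Omega$ on the dense subspace $\M\Omega$, so $V_\phi\in\iota(\N)'$. For the opposite side I would use modular theory: since $\omega$ is $E$-invariant, Takesaki's theorem gives $\sigma^{\omega}_t(\iota(\N)) = \iota(\N)$, whence for $n$ analytic one gets $J_\M\iota(n)\Omega = \iota(m)\Omega$ with $m := \sigma^{\omega_{\restriction\N}}_{-i/2}(n^*)\in\N$. Feeding this into the definition and using right $\N$-bimodularity, together with $J_\M\Omega=\Omega$ and the fact that $j_\M(\iota(n))=J_\M\iota(n)J_\M\in\M'$ commutes with $\M$, a short computation yields $V_\phi\,j_\M(\iota(n))\,x\Omega = \phi(x\iota(m))\Omega = \phi(x)\iota(m)\Omega = j_\M(\iota(n))\,V_\phi\,x\Omega$, so by density $V_\phi$ commutes with $j_\M(\iota(\N))$. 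Hence $V_\phi\in\iota(\N)'\cap j_\M(\iota(\N))' = \iota(\N)'\cap\M_1 = \N'\cap\M_1$. As $J_\M$ (and therefore $j_\M$ and $\M_1$) is the modular conjugation of the common positive cone $\mathcal{P}^{\natural}_\M$ of all admissible $\Omega$, and as the Jones projection $e_\N = V_E\in\N'\cap\M_1$ depends, by Kosaki's result already quoted, only on $E$ and this cone, the algebra $\N'\cap\M_1$ together with $e_\N$ is a fixed object, independent of the chosen $\Omega$.

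It remains to remove the residual dependence on $\Omega$ inside the cone, and this is the step I expect to be the main obstacle. Two admissible vectors $\Omega_1,\Omega_2$ in the same cone give $E$-invariant states $\omega_1,\omega_2$, and the two implementations $V_\phi^{(1)},V_\phi^{(2)}$ both live in the \emph{fixed} algebra $\N'\cap\M_1$; membership alone does not pin the element down, so they must be compared. I would compare them through the Connes cocycle $u_t = (D\omega_2:D\omega_1)_t$, which lies in $\N$ precisely because both states factor through the common expectation $E$ (the cocycle is insensitive to composition with $E$). Exploiting that $\phi$ is $\N$-bimodular, so that it intertwines the relative modular data relating $\Omega_1$ and $\Omega_2$ and governed by $u_t\in\N$, I would conclude $V_\phi^{(1)} = V_\phi^{(2)}$; making this rigorous requires careful relative-modular bookkeeping and is exactly the general-$\phi$ analogue of Kosaki's comparison of $E$-invariant states for the Jones projection. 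In the discrete case one can shortcut this last point: $\widehat E = j_\M E^{-1} j_\M$ restricts to a cone-canonical, faithful, semifinite, scalar-valued (by irreducibility, since $\N'\cap\M = \CC1$) trace on $\N'\cap\M_1$, which separates elements and lets one read $V_\phi$ off from $\phi$ and $e_\N$ directly, without reference to $\Omega$.
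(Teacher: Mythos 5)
Your preliminary observations are sound, and your direct verification that $V_\phi$ commutes with $j_\M(\iota(\N))$ -- hence lies in $\N'\cap\M_1$ -- is correct and in fact slightly more general than the route the paper takes later (which passes through $\Omega$-adjointability, i.e.\ the stronger relation $J_{\M,\Omega}V_\phi=V_\phi J_{\M,\Omega}$, available under discreteness and locality). But none of that is the content of the lemma, and the one step that is -- showing $V_\phi^{(1)}=V_\phi^{(2)}$ for two admissible vectors $\Omega_1,\Omega_2$ in the same cone -- is precisely the step you leave unproven. ``I would conclude \dots\ making this rigorous requires careful relative-modular bookkeeping'' is an announced intention, not an argument: nothing in the proposal explains how the fact that $(D\omega_2:D\omega_1)_t\in\N$ forces the two implementations to coincide. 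The discrete-case ``shortcut'' does not close the gap either: a canonical faithful trace on $\N'\cap\M_1$ separates elements of that algebra, but you give no formula expressing $V_\phi$ in terms of $\phi$ and $e_\N$, which is what you would need to ``read it off''.

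The paper's proof is the assertion that Kosaki's argument for $V_E=e_\N$ adapts, and the adaptation is short once one isolates what Kosaki's Lemma A delivers, namely $e_\N^{(1)}=e_\N^{(2)}$, i.e.\ $\overline{\iota(\N)\Omega_1}=\overline{\iota(\N)\Omega_2}$; in particular $\Omega_2\in\overline{\iota(\N)\Omega_1}$. Granting that, write $\Omega_2=\lim_k\iota(n_k)\Omega_1$ with $n_k\in\N$; then for $x\in\M$, boundedness of $V_\phi^{(1)}$ and right $\N$-bimodularity of $\phi$ give $V_\phi^{(1)}x\Omega_2=\lim_k V_\phi^{(1)}x\iota(n_k)\Omega_1=\lim_k\phi(x)\iota(n_k)\Omega_1=\phi(x)\Omega_2=V_\phi^{(2)}x\Omega_2$, and density of $\M\Omega_2$ finishes the proof. (Alternatively, one checks $e_\N\,x^*V_\phi\,y\,e_\N=E(x^*\phi(y))\,e_\N$ for all $x,y\in\M$; since $\M e_\N\Hil$ is dense, this identity determines $V_\phi$ from $\phi$, $E$ and $e_\N$ alone, and $e_\N$ is cone-canonical by Kosaki's result you already quoted.) Either way the whole burden rests on Kosaki's lemma plus right bimodularity; the additional cocycle analysis you anticipate for general $\phi$ is not needed, and without either of these closing arguments your proposal does not prove the statement.
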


\begin{proof}
The proof of \cite[\Lem A]{Ko1989}, which shows that the Jones projection of $E$ depends only on the positive cone, adapts to an arbitrary $\N$-bimodular $\phi$. 
\end{proof}

Assume for the moment that $\N\subset\M$ is in addition discrete and local, see Remark \ref{rmk:Vinrelcomm}. Then $V_\phi \in \N' \cap \M_1$. Indeed, $\phi$ is $\N$-bimodular by assumption, thus $V_\phi \in \N'$, and $V_\phi = J_{\M,\Omega} V_\phi J_{\M,\Omega}$ holds because $\phi$ is automatically $\Omega$-adjointable \cite[\Lem 4.22]{BiDeGi2021} in the sense of Definition \ref{def:Omegaadj}, thus $V_\phi \in \M_1$. In fact, $J_{\M,\Omega} V_\phi =  V_\phi J_{\M,\Omega}$ is equivalent to $\Omega$-adjointability \cite[\Sec 6]{AcCe1982}, \cite[\Sec 2.5]{BiDeGi2021}.

Note that $\phi^\sharp$ is $\N$-bimodular when $\phi$ is $\N$-bimodular. Moreover,
\begin{align}\label{eq:propsV}
V_{\phi_1 \circ \phi_2} = V_{\phi_1} V_{\phi_2}, \quad V_{\phi^\sharp} = V_{\phi}^*.
\end{align}

\begin{rmk}\label{rmk:Vinrelcomm}
As observed in \cite[\Lem 4.22]{BiDeGi2021}, every $\N$-bimodular ucp map $\phi:\M\to\M$ is $\Omega$-adjointable under the weaker condition $a_\rho = 1_{H_\rho}$ for every irreducible $\rho \prec \theta$, where the operators $a_\rho$ are introduced in \cite[\Sec 3]{IzLoPo1998}. This condition is implied \eg by discreteness and locality \cite[\Prop 2.19]{BiDeGi2021} and by finiteness of the index \cite[\Sec 3]{IzLoPo1998}. In these cases $V_\phi \in \N' \cap \M_1$.
\end{rmk}

If $\phi\in \Span_\CC(\UCP_\N(\M))$, namely if $\phi = \sum_j \alpha_j \phi_j$ with $\alpha_j\in\CC$, $\phi_j\in\UCP_\N(\M)$, then $V_\phi$ defined as in \eqref{eq:defV} is bounded and it belongs to $\N'\cap\M_1$. It holds $V_\phi = \sum_j \alpha_j V_{\phi_j}$, the $\Omega$-adjoint operation \eqref{eq:Omegaadj} extends antilinearly, and the properties stated in \eqref{eq:propsV} continue to hold.

\begin{defi}\label{def:Fhat}
We define the Fourier transform on the complex span of $\N$-bimodular ucp maps $\M\to\M$ as follows:
\begin{align}
\widehat{\cF}:{{{\Span}_{\CC}}({\UCP}_{\N}}&(\M))\to \Hom(\theta,\theta)\\
&\phi \mapsto \gamma_1(V_\phi)
\end{align}
as $\gamma_1 (\N' \cap \M_1) = \theta(\N)' \cap \N = \Hom(\theta,\theta)$.
\end{defi}

\begin{prop}
The map $\widehat \cF$ is injective.
\end{prop}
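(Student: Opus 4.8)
The plan is to exhibit $\widehat{\cF}$ as a composition of two injective maps, namely the assignment $\phi\mapsto V_\phi$ followed by the restriction of $\gamma_1$ to $\N'\cap\M_1$. Since $\widehat{\cF}$ is linear and $V_{\sum_j\alpha_j\phi_j}=\sum_j\alpha_j V_{\phi_j}$, it suffices to show that the only $\phi\in\Span_\CC(\UCP_\N(\M))$ with $\widehat{\cF}(\phi)=0$ is $\phi=0$.

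First I would recall that $\gamma_1\in\End(\M_1)$ is injective on $\N'\cap\M_1$. Indeed, $\gamma_1$ is a unital normal $*$-endomorphism of the type $\III$ factor $\M_1$, so its kernel is a $\sigma$-weakly closed two-sided ideal of $\M_1$, hence either $\{0\}$ or $\M_1$; unitality excludes the latter, so $\gamma_1$ is injective, and in particular so is its restriction to $\N'\cap\M_1$. Note that, under the standing discreteness and locality assumptions, $V_\phi$ does lie in $\N'\cap\M_1$ (\cf Remark \ref{rmk:Vinrelcomm} and the discussion preceding Definition \ref{def:Fhat}), so that $\gamma_1(V_\phi)$ is well defined.

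The heart of the argument is then the injectivity of $\phi\mapsto V_\phi$. Suppose $V_\phi=0$. By the defining relation \eqref{eq:defV}, $\phi(y)\Omega=V_\phi\,y\Omega=0$ for every $y\in\M$. Since $\phi(y)\in\M$ (a complex linear combination of elements of $\M$) and $\Omega$ is a separating vector for $\M$, we conclude $\phi(y)=0$ for every $y\in\M$, that is $\phi=0$. Combining the two steps, $\widehat{\cF}(\phi)=\gamma_1(V_\phi)=0$ forces $V_\phi=0$ and hence $\phi=0$; by linearity, $\widehat{\cF}$ is injective.

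No step here presents a genuine obstacle: everything rests on the separating property of $\Omega$ for $\M$ together with the elementary injectivity of an endomorphism of a factor. The only point deserving attention is to confirm that $V_\phi\in\N'\cap\M_1$, so that $\gamma_1$ may legitimately be applied, which is exactly what the discreteness and locality hypotheses secure via Remark \ref{rmk:Vinrelcomm}. As a consistency check one may also observe that on the embedded subspace $\Hom(\gamma,\gamma)\subset\Span_\CC(\UCP_\N(\M))$ the map $\widehat{\cF}$ reproduces the already injective $\cF$, but this is not needed for the full statement.
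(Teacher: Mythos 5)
Your proof is correct and follows essentially the same route as the paper: the paper's own argument is exactly the observation that $\Omega$ is separating for $\M$, so $V_\phi = V_{\phi'}$ forces $\phi(y)\Omega = \phi'(y)\Omega$ and hence $\phi = \phi'$ (with the injectivity of $\gamma_1$, which you spell out via the factoriality of $\M_1$, taken as implicit). Your version is simply a more detailed write-up of the same two-step factorization.
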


\begin{proof}
The vector $\Omega$ is separating for $\M$, and $V_{\phi} = V_{\phi'}$ implies $\phi(y)\Omega = \phi'(y)\Omega$ for every $y\in\M$.
\end{proof}

For every $x\in \Hom(\gamma,\gamma)$ positive and such that $w^* x w = 1$, let $\phi_x := w^*x\gamma(\cdot)w \in \UCP_\N(\M)$. 
Note that $w^* x w$ is a multiple of $1$ whenever $x \in \Hom(\gamma,\gamma)$.
Then $\phi_x$ is dominated by $E$, namely $d E - \phi_x$ is completely positive with $d := \|x\| >0$.
Moreover, every $\phi \in \UCP_\N(\M)$ dominated by $E$ is of the form $\phi = \phi_x$ by an $L^\infty$ version of the Radon--Nikodym theorem for completely positive maps \cite[\Prop 1.4.2]{Ar1969}, \cite[\Prop 5.4]{Pa1973}, \cite[\Prop A.5]{Bi2016}.
More generally, every $x\in \Hom(\gamma,\gamma)$ can be written as $x=\sum_k \alpha_k x_k$ with $\alpha_k\in\CC$, $x_k$ positive and $w^* x_k w = 1$. Then $\phi_x := w^*x\gamma(\cdot)w \in \Span_\CC(\UCP_\N(\M))$.

\begin{rmk}\label{rmk:embeddingiflocal}
If $\N\subset \M$ is discrete and local, by Proposition \ref{prop:gammagammaisLinfty}, $\Hom(\gamma,\gamma) \cong L^\infty(K,\mu_E)$ and 
$$x\in \Hom(\gamma,\gamma) \mapsto \phi_x$$ 
corresponds to the embedding of functions $f \in L^\infty(K, \mu_E)$ into measures dominated by $\mu_E$, namely to $f \mapsto f \dd\mu_E$. Positive operators are mapped to positive measures, the condition $w^* x w = 1$ corresponds to $\int_K f \dd\mu_E = 1$.
\end{rmk}

The following proposition states that $\widehat\cF$ extends the subfactor theoretical Fourier transform:

\begin{prop}\label{prop:FhatisF}
$\widehat \cF$ extends $\cF$ from $\Hom(\gamma,\gamma)$ to $\Span_\CC(\UCP_\N(\M))$:
$$\widehat{\cF}(\phi_x)=\cF(x).$$
\end{prop}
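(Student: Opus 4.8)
The plan is to prove the equivalent operator identity $V_{\phi_x} = \gamma_1^{-1}(\cF(x))$ and then apply the $\ast$-isomorphism $\gamma_1 : \M_1 \to \N$. Fix $x\in\Hom(\gamma,\gamma)$ and set $V := \gamma_1^{-1}(w)\in\M_1$, where $w$ is the isometry of Proposition \ref{prop:Estine}. Since $\gamma_1^{-1}$ is multiplicative and ${\gamma_1}_{\restriction\M}=\gamma$, so that $\gamma_1(w)=\gamma(w)=\theta(w)$ and $\gamma_1(x)=\gamma(x)$, the first step is to rewrite $\gamma_1^{-1}(\cF(x)) = \gamma_1^{-1}(\theta(w)^*\gamma(x)w) = w^* x V$. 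As $V_{\phi_x}$ and $w^* x V$ are bounded, it then suffices to check equality on the dense subspace $\M\Omega$; using $V_{\phi_x}y\Omega = \phi_x(y)\Omega = w^* x\gamma(y)w\Omega$, the whole statement reduces to the two facts $Vy=\gamma(y)V$ on $\M$ and $V\Omega = w\Omega$.

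Next I would establish the intertwining relation $Vy=\gamma(y)V$ for $y\in\M$. Applying $\gamma_1$ and using its injectivity, this is equivalent to $w\gamma(y)=\gamma^2(y)w$, which follows from the defining property $wn=\theta(n)w$ of $w\in\Hom(\id_\N,\theta)$ evaluated at $n=\gamma(y)\in\N$, together with $\theta=\gamma_{\restriction\N}$. Granting this, $Vy\Omega=\gamma(y)V\Omega$, so everything collapses to the single normalization identity $V\Omega=w\Omega$.

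The crux is this normalization, and it is where Proposition \ref{prop:Estine} enters twice. First, $VV^* = \gamma_1^{-1}(w)\gamma_1^{-1}(w^*) = \gamma_1^{-1}(ww^*) = e_\N$. Second, $\gamma_1(V^* w) = w^*\gamma_1(w) = w^*\theta(w) = ww^* = \gamma_1(e_\N)$, so $V^* w=e_\N$ by injectivity of $\gamma_1$; here the identity $w^*\theta(w)=ww^*$ comes from taking adjoints in $wn=\theta(n)w$ to get $w^*\theta(n)=nw^*$ and putting $n=w$. Combining the two gives $Ve_\N = VV^* w = e_\N w$, and evaluating at the $E$-invariant vector $\Omega$ (with $e_\N\Omega=\Omega$ and $E(w)=w$ since $w\in\N$) yields $V\Omega = Ve_\N\Omega = e_\N w\Omega = E(w)\Omega = w\Omega$.

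Finally, with $V_{\phi_x}=w^* x V$ in hand, I would apply $\gamma_1$ and use multiplicativity on $\M_1$ together with $\gamma_1(V)=w$ to obtain $\widehat{\cF}(\phi_x) = \gamma_1(V_{\phi_x}) = \gamma_1(w^*)\gamma_1(x)\gamma_1(V) = \theta(w)^*\gamma(x)w = \cF(x)$. Since $\phi_x(y)=w^* x\gamma(y)w$ for every $x\in\Hom(\gamma,\gamma)$, the argument is uniform in $x$, so no separate linearity reduction is needed. The only genuinely delicate point is the normalization $V\Omega=w\Omega$; the key realization is that, once it is reduced to $VV^*=e_\N$ and $V^* w=e_\N$, it follows purely algebraically from Proposition \ref{prop:Estine} and the relation $w^*\theta(w)=ww^*$, so that the modular-theoretic subtleties — notably that $\Omega$ need not be fixed by the modular conjugation of $\N$ — are entirely avoided.
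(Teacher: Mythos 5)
Your proof is correct and follows essentially the same route as the paper: your operator $V=\gamma_1^{-1}(w)$ is exactly the isometry $v_1$ (with $\gamma_1(v_1)=w$) used there, and both arguments reduce the claim to $V_{\phi_x}=w^{*}xV$, verified on the dense subspace $\M\Omega$ via the intertwining relation and the normalization $V\Omega=w\Omega$. The only difference is cosmetic: you derive the properties $VV^{*}=e_\N$, $Vy=\gamma(y)V$ and $V\Omega=w\Omega$ directly from Proposition \ref{prop:Estine} and $w\in\Hom(\id_\N,\theta)$, whereas the paper imports them from the concrete construction of $v_1$ in \cite{LoRe1995}.
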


\begin{proof}
Let $v_1\in\M_1$ be as in \cite[\Sec 2.5]{LoRe1995}, namely $v^\prime:n\xi\rightarrow n\Omega$, $n\in\N$ and $v_1:=\Ad_{J_{\M,\xi}}(v^\prime)$. We have $v_1v_1^*=e_\N$, $v_1\in\Hom(\id_{\M_1},\gamma_1)$ and $\gamma_1(v_1)=w$.
We have to show that
$$\gamma_1(V_{\phi_x})=\theta(w)^* \gamma(x)w$$
or equivalently
$$V_{\phi_x}=w^* x v_1.$$
Note that $v_1\Omega=w\Omega$ since $w^*v_1\Omega=\gamma_1(v_1^*)v_1\Omega=v_1 v_1^*\Omega=e_\N\Omega=\Omega.$
For every $y\in\M$, we have
$$w^* xv_1 y\Omega=w^* x\gamma(y)v_1\Omega=w^* x\gamma(y)w\Omega = \phi_x(y)\Omega$$
from which we get the claim.
\end{proof}

\begin{rmk}
The proposition above holds for arbitrary semidiscrete subfactors. The proof shows that $V_{\phi_x} \in \N' \cap \M_1$ for every $x \in \Hom(\gamma,\gamma)$, without the conditions mentioned in Remark \ref{rmk:Vinrelcomm}.
\end{rmk}

\subsection{The local discrete case: Fourier transform on measures}\label{sec:classicalF}

The subfactor theoretical Fourier transform $\cF$ and its extension $\widehat\cF$ can be defined for semidiscrete subfactors. Assume that $\N\subset\M$ is discrete and local. Then by Proposition \ref{prop:SpanUCP}, $\Span_\CC(\UCP_\N(\M))$ can be identified with the set of complex bounded Radon measures $M(K,\B(K))$ on the subfactor theoretical hypergroup $K$.
In this section, we check that $\widehat\cF$ defined on $\Span_\CC(\UCP_\N(\M))$ agrees with the classical Fourier transform $\mathfrak{F}$ defined on $M(K,\B(K))$.

The hypergroup theoretical convolution and involution on probability measures can be extended to $M(K,\B(K))$, endowing it with the structure of an involutive algebra, \cf \cite[\Rmk 3.3]{BiDeGi2021}. 

\begin{defi}
A representation of a compact hypergroup $K$ on a Hilbert space $H_\pi$ (\cf \cite[\Def 2.1.1]{BlHe1995}) is a unital involutive algebra homomorphism:
$$\pi: M(K,\B(K)) \to \B(H_\pi).$$ 

The representation is called continuous if its restriction to \emph{positive} measures is continuous from the weak* topology on $M(K,\B(K))$ to the weak operator topology on $\B(H_\pi)$. 
\end{defi}

\begin{defi}
Let $\mathfrak{F}$ be the hypergroup theoretical Fourier transform (\cf \cite[\Sec 3]{Vr1979}):
\begin{align}
\mathfrak{F}: M(K,\B(K)) &\to \bigoplus_{\pi} \B(H_\pi) \\
\mu &\mapsto \bigoplus_{\pi} \int_K \pi(k) \dd\mu(k)
\end{align}
where the direct sum is over all unitary equivalence classes $[\pi]$ of continuous irreducible representations of $K$ and $\pi(k) := \pi(\delta_k)$, $k\in K$. 
\end{defi}
Note that  $\mathfrak{F}$ depends on the choice of representative in each unitary equivalence class and that different choices yield unitarily equivalent Fourier transforms.

\begin{rmk}
If $\Gamma$ is a finite abelian group with an outer action on $\N$, the subfactor theoretical Fourier transform considered in Definition \ref{def:subfF} for the crossed product $\N \subset \N \rtimes \Gamma = \M$ corresponds in fact to the \emph{inverse} of the ordinary group theoretical Fourier transform. Namely to the map which associates to a function $f$ on the Pontryagin dual $\widehat \Gamma = G$, the function $\chi\in\Gamma \mapsto \int_G f(g) \chi(g) \dd g$ instead of $\int_G f(g) \overline\chi(g) \dd g$. 
Hence we may have used the symbol $\cF^{-1}$ in Definition \ref{def:subfF} instead of $\cF$.
\end{rmk}

By \cite[\Thm 6.4]{BiDeGi2021}, each $[\pi]$ admits a representative $\pi_\rho$ on the space of charged fields $H_\rho$ (whose dimension is finite and equal to the multiplicity $n_\rho$ of $\rho\prec\theta$) defined by
\begin{align}
\pi_\rho : M(K,\B(K)) &\to \B(H_\rho) \\
\pi_\rho(\mu) \psi &:= \phi_\mu(\psi)
\end{align}
where $\mu \mapsto \phi_\mu$ is the identification of $M(K,\B(K))$ with $\Span_\CC(\UCP_\N(\M))$, the inverse of the map $\phi \mapsto \mu_\phi$ of Proposition \ref{prop:SpanUCP}.

\begin{prop}\label{prop:intertHrhoHomrhobar}
Let $H_\rho$ be endowed with the inner product $(\psi_1, \psi_2) := E(\psi_2 \psi_1^*)$ and let $\Hom(\bar\rho,\theta)$ be endowed with the inner product 
$(w_1, w_2) := w_1^*w_2$. For every $\psi\in H_\rho$, let $\psi^\bullet := \psi^* \rbar_\rho \in H_\rhobar$ where $\rbar_\rho \in \Hom(\id_\N, \rho\rhobar)$ is part of a standard solution of the conjugate equations for $\rho$ and $\rhobar$.

Then the linear map:
$$\psi\in H_\rho \mapsto \gamma(\psi^{\bullet *})w \in \Hom(\bar\rho,\theta)$$
is a unitary intertwiner between the representation $\pi_\rho$ acting on $H_\rho$ as above and the representation $U_\rho$ acting on 
$\Hom(\rhobar,\theta)$ 
as follows 
$$U_\rho(\mu)\gamma(\psi^{\bullet *})w := \gamma(\phi_\mu(\psi^{\bullet *}))w.$$
\end{prop}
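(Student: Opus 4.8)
The plan is to check four things about the map $T\colon\psi\mapsto\gamma(\psi^{\bullet*})w$, in increasing order of difficulty: that it takes values in $\Hom(\rhobar,\theta)$, that it is a linear bijection, that it intertwines $\pi_\rho$ with $U_\rho$, and that it is isometric. The last two points will also show that $U_\rho$ is a well-defined representation and that it is unitarily equivalent to $\pi_\rho$ via $T$.

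First I would rewrite $T$ in a usable form. Since $\psi^{\bullet*}=(\psi^*\rbar_\rho)^*=\rbar_\rho^*\psi$ and $\iotabar$ is a unital $*$-homomorphism with $\iotabar\iota=\theta$, one gets $T\psi=\iotabar(\rbar_\rho^*\psi)w=\theta(\rbar_\rho)^*\iotabar(\psi)w$. A direct computation using $\iotabar(\psi)\in\Hom(\theta,\theta\rho)$, $\theta(\rbar_\rho)\in\Hom(\theta,\theta\rho\rhobar)$ and $w\in\Hom(\id_\N,\theta)$ shows $T\psi\,\rhobar(n)=\theta(n)\,T\psi$, so $T$ lands in $\Hom(\rhobar,\theta)$. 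Moreover $T$ factors as the antilinear conjugation $H_\rho\to H_{\rhobar}$, $\psi\mapsto\psi^\bullet$, followed by the antilinear bijection $\chi\mapsto\iotabar(\chi^*)w$ of Lemma \ref{lem:alphasuNinM} applied to $\rhobar$ (note $\rhobar\prec\theta$ is irreducible, as $\theta$ is self-conjugate); the first map is a bijection by the conjugate equations for the finite object $\rho$, so $T$ is a linear bijection.

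The intertwining property is then essentially formal. Since $\rbar_\rho\in\N$ and every $\phi_\mu$ is $\N$-bimodular, $\phi_\mu(\psi^{\bullet*})=\phi_\mu(\rbar_\rho^*\psi)=\rbar_\rho^*\phi_\mu(\psi)=(\phi_\mu(\psi))^{\bullet*}$, and $\phi_\mu$ preserves $H_\rho$; hence $U_\rho(\mu)T\psi=\iotabar((\phi_\mu(\psi))^{\bullet*})w=T(\pi_\rho(\mu)\psi)$. Thus $U_\rho(\mu)=T\,\pi_\rho(\mu)\,T^{-1}$, which shows, using injectivity of $T$, that $U_\rho$ is well defined and multiplicative and that $T$ intertwines $\pi_\rho$ and $U_\rho$; that $U_\rho$ is a $*$-representation unitarily equivalent to $\pi_\rho$ will then follow once $T$ is shown to be unitary.

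The isometry is the heart of the matter. Using $E=\iota(w)^*\gamma(\slot)\iota(w)$ (Proposition \ref{prop:Estine}) and that $\iotabar$ is a $*$-homomorphism, one computes $(T\psi_1)^*T\psi_2=w^*\iotabar(\psi_1^*\rbar_\rho\rbar_\rho^*\psi_2)w=E(\psi_1^*\rbar_\rho\rbar_\rho^*\psi_2)$, whereas the inner product on $H_\rho$ is $(\psi_1,\psi_2)=E(\psi_2\psi_1^*)$. So unitarity of $T$ is equivalent to the rotation identity $E(\psi^*\rbar_\rho\rbar_\rho^*\psi')=E(\psi'\psi^*)$ for all $\psi,\psi'\in H_\rho$, i.e.\ to the anti-unitarity of the conjugation $\bullet$. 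I expect this to be the main obstacle. I would prove it from the standardness of the solution $(r_\rho,\rbar_\rho)$ of the conjugate equations ($\rbar_\rho^*\rho(r_\rho)=1$, $r_\rho^*\rhobar(\rbar_\rho)=1$, $\rbar_\rho^*\rbar_\rho=d(\rho)1$) together with the compatibility of $E$ with the Frobenius structure carried by the charged fields: expanding $E$ through $w$ and moving $\theta(\rbar_\rho)$ past $\iotabar(\psi)$ by naturality reduces the claim to sphericity of the standard trace on the finite-dimensional sector $\rho\prec\theta$. Crucially this uses only that $\rho$, and not necessarily $\iota$, has finite dimension, so that $(r_\rho,\rbar_\rho)$ exist. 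It is enough to verify the identity on a Pimsner--Popa orthonormal basis $\{\psi_a\}$ of $H_\rho$, where $E(\psi_a\psi_b^*)=\delta_{ab}$ and the $v_{\psi_a}=\iotabar(\psi_a^*)w$ are orthonormal in $\Hom(\rho,\theta)$, so that the statement becomes $E(\psi_a^*\rbar_\rho\rbar_\rho^*\psi_b)=\delta_{ab}$. Combining the four steps yields that $T$ is a unitary intertwiner, proving the proposition.
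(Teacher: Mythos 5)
Your overall architecture is the same as the paper's: surjectivity comes from the bijection $t\mapsto\iotabar(t)w$ (the paper cites \cite[\Lem 6.15]{DeGi2018}, the same input behind Lemma \ref{lem:alphasuNinM}), and your intertwining argument is exactly the paper's one-line computation from $\N$-bimodularity of $\phi_\mu$. Your additional checks (that the map lands in $\Hom(\rhobar,\theta)$, that $U_\rho$ is thereby well defined and multiplicative) are correct and are left implicit in the paper. Your reduction of unitarity to the identity $E(\psi_1^{\bullet}\psi_2^{\bullet *})=E(\psi_2\psi_1^*)$ also matches the paper's computation verbatim.

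The gap is in the step you yourself single out as the heart of the matter. You claim that the identity $E(\psi_1^{\bullet}\psi_2^{\bullet *})=E(\psi_2\psi_1^*)$, i.e.\ anti-unitarity of the conjugation $\psi\mapsto\psi^\bullet=\psi^*\rbar_\rho$ with respect to the $E$-inner products on $H_\rho$ and $H_\rhobar$, ``uses only that $\rho$, and not necessarily $\iota$, has finite dimension,'' and you propose to derive it from standardness of $(r_\rho,\rbar_\rho)$ plus compatibility of $E$ with the Frobenius structure. That cannot work as stated: for a general irreducible discrete (non-local, infinite index) subfactor the two natural sesquilinear forms on $H_\rho$ --- $E(\psi_2\psi_1^*)$ and $d(\rho)^{-1}\psi_1^*\psi_2$ --- differ by the positive operators $a_\rho$ introduced in \cite[\Sec 3]{IzLoPo1998}, and the conjugation $\bullet$ is anti-unitary precisely when $a_\rho=1_{H_\rho}$; this is not a consequence of the conjugate equations for $\rho$ alone. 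What rescues the identity here is locality: \cite[\Prop 2.19]{BiDeGi2021} shows that locality (like finiteness of the index) forces $a_\rho=1_{H_\rho}$, and this is exactly the citation the paper attaches to the equality $E(\psi_1^{\bullet}\psi_2^{\bullet *})=E(\psi_2\psi_1^*)$. So your ``sphericity'' step must either invoke that result or reprove it using the braiding/locality hypothesis; a derivation with no input beyond $d(\rho)<\infty$ would prove something false in the non-local discrete setting.
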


\begin{proof}
The map preserves the inner products on $H_\rho$ and $\Hom(\rhobar,\theta)$ respectively: 
\begin{align}
(\gamma(\psi_1^{\bullet *})w, \gamma(\psi_2^{\bullet *})w) &= w^*\gamma(\psi_1^{\bullet}\psi_2^{\bullet *})w \\
&= E(\psi_1^{\bullet}\psi_2^{\bullet *}) \\
&= E(\psi_2 \psi_1^*) = (\psi_1, \psi_2)
\end{align}
where for the third equality we refer to \cite[\Sec 3]{IzLoPo1998} and \cite[\Prop 2.19]{BiDeGi2021}.

It is surjective by \cite[\Lem 6.15]{DeGi2018} and it intertwines the representations $\pi_\rho$ and $U_\rho$, since
\begin{align}
\gamma((\pi_\rho(\mu)\psi)^{\bullet *}) w &= \gamma(\phi_\mu(\psi)^{\bullet *}) w \\
&= \gamma(\phi_\mu(\psi^{\bullet *})) w \\
&= U_\rho(\mu)\gamma(\psi^{\bullet *}) w
\end{align}
where the second equality holds by $\N$-bimodularity of $\phi_\mu$.
\end{proof}

Below, we choose the representatives of the unitary equivalence classes of continuous irreducible representations of $K$ to be the $U_\rho$ considered above and we take the corresponding $\mathfrak{F}$.

\begin{prop}
\begin{align}
\widehat{\mathcal{F}}(\phi_\mu) = \mathfrak{F}(\mu).
\end{align}
\end{prop}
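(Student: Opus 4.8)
The plan is to reduce the identity to a single computation expressing $\gamma_1(V_\phi)$ through the action of $\phi$, and then to read off the equality block by block in the atomic von Neumann algebra $\Hom(\theta,\theta)$, which under the chosen representatives is identified with the codomain $\bigoplus_\rho\B(\Hom(\rhobar,\theta))$ of $\mathfrak{F}$.

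First I would establish the \emph{key formula}: for every $\phi\in\Span_\CC(\UCP_\N(\M))$ and every $y\in\M$,
$$\gamma_1(V_\phi)\,\gamma(y)\,w = \gamma(\phi(y))\,w.$$
Recall from the proof of Proposition \ref{prop:FhatisF} that $v_1\in\Hom(\id_{\M_1},\gamma_1)$ with $\gamma_1(v_1)=w$ and $v_1\Omega=w\Omega$. I would first prove the operator identity $V_\phi\,y\,v_1 = \phi(y)\,v_1$ in $\M_1$ by evaluating both sides on the dense set $\M_1\Omega$: for $z\in\M_1$ one has $v_1 z\Omega=\gamma_1(z)v_1\Omega=\gamma_1(z)w\Omega$ with $\gamma_1(z)\in\gamma_1(\M_1)=\N$, so that $y\gamma_1(z)w\in\M$ and, by \eqref{eq:defV} together with $\N$-bimodularity of $\phi$,
$$V_\phi\,y\,v_1 z\Omega = \phi(y\gamma_1(z)w)\Omega = \phi(y)\gamma_1(z)w\Omega = \phi(y)\,v_1 z\Omega.$$
Applying the endomorphism $\gamma_1$ to $V_\phi y v_1=\phi(y)v_1$ and using $\gamma_1{}_{\restriction\M}=\gamma$ and $\gamma_1(v_1)=w$ gives the key formula.

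Next I would identify $\Hom(\theta,\theta)$ with $\bigoplus_\rho\B(\Hom(\rhobar,\theta))$. By irreducibility and discreteness of $\N\subset\M$, $\Hom(\theta,\theta)\cong\N'\cap\M_1$ is an atomic von Neumann algebra whose blocks are the matrix algebras $\B(\Hom(\sigma,\theta))$, one for each irreducible $\sigma\prec\theta$, the $\sigma$-block acting on $\Hom(\sigma,\theta)$ by left multiplication $x\mapsto(v\mapsto xv)$; hence an element of $\Hom(\theta,\theta)$ is determined by its left action on each $\Hom(\sigma,\theta)$. Since $\theta\cong\bar\theta$ is self-conjugate, $\rho\mapsto\rhobar$ permutes the irreducible subobjects of $\theta$, so the blocks indexed by $\rhobar$ exhaust all blocks and are in bijection with the classes over which $\mathfrak{F}$ is summed, with $U_\rho(\mu)=\int_K U_\rho(k)\dd\mu(k)$ since $U_\rho$ is a representation. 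It therefore suffices to show, for each irreducible $\rho\prec\theta$, that $\gamma_1(V_{\phi_\mu})$ acts on $\Hom(\rhobar,\theta)$ by $U_\rho(\mu)$.

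Finally, by the surjectivity in Proposition \ref{prop:intertHrhoHomrhobar} every element of $\Hom(\rhobar,\theta)$ is of the form $\gamma(\psi^{\bullet *})w$ with $\psi\in H_\rho$ and $\psi^{\bullet *}\in\M$, so the key formula with $y=\psi^{\bullet *}$ yields
$$\gamma_1(V_{\phi_\mu})\,\gamma(\psi^{\bullet *})w = \gamma(\phi_\mu(\psi^{\bullet *}))w = U_\rho(\mu)\,\gamma(\psi^{\bullet *})w,$$
the last equality being the definition of $U_\rho$. Thus the $\rhobar$-block of $\widehat\cF(\phi_\mu)=\gamma_1(V_{\phi_\mu})$ equals $U_\rho(\mu)$ for every $\rho$, whence $\widehat\cF(\phi_\mu)=\bigoplus_\rho U_\rho(\mu)=\mathfrak{F}(\mu)$. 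I expect the only delicate point to be the bookkeeping in the block identification — in particular invoking self-conjugacy of $\theta$ so that the $\rhobar$-blocks match the summation index of $\mathfrak{F}$ — since the analytic core collapses to the key formula, which is a short calculation with $v_1$ and $\N$-bimodularity.
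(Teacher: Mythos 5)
Your proof is correct and follows essentially the same route as the paper: the key identity $V_\phi\, y\, v_1=\phi(y)\,v_1$ (which the paper establishes directly for $y=\psi^{\bullet *}$), the realization of $\Hom(\rhobar,\theta)$ as $\{\gamma(\psi^{\bullet *})w : \psi\in H_\rho\}$ via Proposition \ref{prop:intertHrhoHomrhobar}, and the defining formula for $U_\rho$ together show that both sides act identically on each block of $\Hom(\theta,\theta)$. The one step you pass over a little quickly is $U_\rho(\mu)=\int_K U_\rho(k)\dd\mu(k)$, which requires the weak* continuity of the representation (equivalently $\phi_\mu=\int_K\phi_{\delta_k}\dd\mu(k)$, cited in the paper as \cite[\Prop 5.5]{BiDeGi2021}) and not merely the algebra homomorphism property.
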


\begin{proof}
On the one hand, for every $\phi\in \Span_\CC(\UCP_\N(\M))$, $\psi \in H_\rho$, $m\in \M$, and $v_1$, $\Omega$ as in the proof of Proposition \ref{prop:FhatisF}, we have 
\begin{align}
V_\phi \psi^* v_1 m\Omega &=V_\phi \psi^* \gamma(m) v_1\Omega \\
&= V_\phi \psi^* \gamma(m) w\Omega \\
&= \phi(\psi^*)\gamma(m) w\Omega \\
&= \phi(\psi^*) v_1 m\Omega
\end{align}
from which $V_\phi \psi^* v_1=\phi(\psi^*) v_1$, as $\Omega$ is cyclic for $\M$. Thus 
\begin{align}
\widehat{\mathcal{F}}(\phi) \gamma(\psi^{\bullet *}) w &= \gamma_1(V_{\phi}) \gamma(\psi^{\bullet *}) w \\
&= \gamma_1(V_{\phi} \psi^{\bullet *} v_1) \\
&= \gamma(\phi(\psi^{\bullet *})) w.
\end{align}

On the other hand, by Proposition \ref{prop:intertHrhoHomrhobar} and \cite[\Prop 5.5]{BiDeGi2021}, for every $\mu \in M(K,\B(K))$
\begin{align}
\mathfrak{F}(\mu) \gamma(\psi^{\bullet *}) w &= \int_K U_\rho(k) \gamma(\psi^{\bullet *}) w  \dd\mu(k) \\
&= \int_K \gamma( \phi_{\delta_k}(\psi^{\bullet *})) w \dd\mu(k) \\
&= \gamma( \phi_{\mu}(\psi^{\bullet *})) w
\end{align}
concluding the proof.
\end{proof}

\subsection{$L^p$ spaces and Fourier inequalities}\label{sec:Finequalities}

For the remaining part of the paper, we assume that $\N\subset\M$ is in addition \emph{discrete} and \emph{local}. By Proposition \ref{prop:gammagammaisLinfty}, $\Hom(\gamma,\gamma)$ is commutative and *-isomorphic to $L^\infty(K, \mu_E)$. 

\begin{lem}\label{lem:Estate}
The functional $x\mapsto w^*xw$ on $\Hom(\gamma,\gamma)$ coincides with the restriction of $E$ to $\Hom(\gamma,\gamma)$, in particular it is a normal faithful state. 
\end{lem}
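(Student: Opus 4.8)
The plan is to show that the two normal functionals $x\mapsto w^\ast x w$ and $x\mapsto E(x)$ agree on $\Hom(\gamma,\gamma)$ by transporting everything through the identification $\Hom(\gamma,\gamma)\cong L^\infty(K,\mu_E)$ of Proposition \ref{prop:gammagammaisLinfty}, and then comparing the two functionals on the positive elements, which span the algebra. First I would record that $x\mapsto w^\ast x w$ is actually a scalar-valued functional: for $x\in\Hom(\gamma,\gamma)=\gamma(\M)'\cap\M$ one has $w^\ast x w\in\N'\cap\M=\CC 1$ by irreducibility, as already noted in Section \ref{sec:FourierUCP}. The one-line reason is that for $n\in\N$ one has $nw^\ast=w^\ast\theta(n)$ and $\theta(n) w=wn$, while $\theta(n)=\gamma(n)\in\gamma(\M)$ commutes with $x$; hence $n(w^\ast x w)=w^\ast\gamma(n)xw=w^\ast x\gamma(n)w=(w^\ast x w)n$. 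This functional is clearly normal, positive, and unital since $w^\ast w=1$.

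On the other side, recall from Proposition \ref{prop:Estine} that $E=w^\ast\gamma(\slot)w$, so that $E(x)=w^\ast\gamma(x)w$. Thus the content of the lemma is the identity $w^\ast x w=w^\ast\gamma(x)w$ for $x\in\Hom(\gamma,\gamma)$, which is not a priori obvious and does not hold for general $x\in\M$.

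The main step is the comparison, and it reduces to a normalization plus a scaling argument. Since both $x\mapsto w^\ast x w$ and $E$ are complex-linear, it suffices to check agreement on positive elements. Let $x\in\Hom(\gamma,\gamma)$ be positive and write $x=x_g$ for a positive $g\in L^\infty(K,\mu_E)$ under the $\ast$-isomorphism of Proposition \ref{prop:gammagammaisLinfty}; set $c:=\int_K g\dd\mu_E$. If $c=0$, then $g=0$ in $L^\infty(K,\mu_E)$ since the Haar measure is faithful, so $x=0$ and both functionals vanish. If $c>0$, then $f:=g/c$ satisfies $\int_K f\dd\mu_E=1$, and since $g\mapsto x_g$ is linear we have $x_g=c\,x_f$. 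By the defining normalization $w^\ast x_f w=1$ of $x_f$ in Proposition \ref{prop:gammagammaisLinfty}, we get $w^\ast x w=c$, while the last identity of Proposition \ref{prop:gammagammaisLinfty} gives $E(x)=E(x_g)=\int_K g\dd\mu_E=c$. Hence $w^\ast x w=E(x)$ on positive elements, and by linearity on all of $\Hom(\gamma,\gamma)$.

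The "in particular" clause is then immediate: under $\Hom(\gamma,\gamma)\cong L^\infty(K,\mu_E)$ the functional $E(\slot)=\int_K\slot\dd\mu_E$ is integration against the faithful Haar measure, hence a normal faithful state, and therefore so is $x\mapsto w^\ast x w$. I do not expect any serious obstacle; the only point requiring care is that $w^\ast x w$ is genuinely scalar-valued, which is exactly where irreducibility is used, the remainder being the bookkeeping of the identification $g\mapsto x_g$. As an alternative to the scaling step, one could instead observe directly that a normal positive functional on $L^\infty(K,\mu_E)$ which takes the value $1$ on every positive $f$ with $\int_K f\dd\mu_E=1$ must coincide with $\int_K\slot\dd\mu_E$.
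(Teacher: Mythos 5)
Your proof is correct, but it takes a genuinely different (and heavier) route than the paper. The paper's argument, attributed to Nill--Wiesbrock, is purely algebraic and uses only irreducibility together with the Connes--Stinespring form $E=w^*\gamma(\slot)w$: one observes that \emph{both} $w^*xw$ and $E(x)$ are scalar multiples of $1$ for $x\in\Hom(\gamma,\gamma)$ (the first because $w^*xw\in\N'\cap\M=\CC1$, exactly as in your first paragraph; the second because $E$ is $\gamma(\M)$-bimodular, so $E(x)\in\gamma(\M)'\cap\N\cong\M'\cap\M_1=\CC1$), and then the identity follows in one line from unitality and $\N$-bimodularity of $E$: $w^*xw=E(w^*xw)=w^*E(x)w=E(x)\,w^*w=E(x)$. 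You instead transport the comparison through the $*$-isomorphism $\Hom(\gamma,\gamma)\cong L^\infty(K,\mu_E)$ of Proposition \ref{prop:gammagammaisLinfty} and match the two functionals on positive elements via the normalization $w^*x_fw=1$ and the identity $E(x_g)=\int_K g\dd\mu_E$. This is valid under the standing hypotheses of the section (discreteness and locality are assumed there, so invoking Proposition \ref{prop:gammagammaisLinfty} is legitimate and not circular), but it buys nothing extra: the paper's argument works for any irreducible semidiscrete subfactor and does not need the hypergroup identification at all. Note also that your first paragraph already contains half of the shorter argument; had you followed it with the bimodularity trick above, the measure-theoretic detour and the scaling/approximation bookkeeping would have been unnecessary.
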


\begin{proof}
This observation is due to \cite[\Cor 3]{NiWi1995}. Notice first that $w^*xw$ and $E(x)$ are both numbers (scalar multiples of 1) for every $x\in\Hom(\gamma,\gamma)$. Thus $w^*xw = w^* \gamma(x) w = E(x)$ follows. 
\end{proof}

\begin{notation}
Denote $L^\infty(K) := \Hom(\gamma,\gamma)$ and $M(K) := \Span_\CC(\UCP_\N(\M))$. Denote also $P(K) := \UCP_\N(\M)$.
\end{notation}

\begin{defi}
Let $L^1(K)$, $L^2(K)$ and more generally $L^p(K)$, $1\leq p < \infty$, be the completion of $\Hom(\gamma,\gamma)$ in the norm $\|x\|_p := (w^* |x|^p w)^{1/p} = E(|x|^p)^{1/p}$, with $|x| = (x^*x)^{1/2} \in \Hom(\gamma,\gamma)$ the modulus of $x$. Let $\|x\|_\infty := \|x\|$ be the operator norm on $\Hom(\gamma,\gamma)$, or equivalently $\|x\|_\infty := \|x\|_{\B(L^2(K))}$.
\end{defi}

Recall that if $\N\subset\M$ is irreducible and discrete, by \cite[\Thm 3.3]{IzLoPo1998}, $\Hom(\theta,\theta)$ is *-isomorphic to a von Neumann algebraic direct sum of matrix algebras:
$$\Hom(\theta,\theta) \cong \bigoplus_{[\rho]} M_{n_\rho}(\CC)$$
where $\rho$ runs over the inequivalent irreducible (hence with finite dimension $d(\rho)$) subendomorphisms of $\theta = \bigoplus \rho$ and $n_\rho$ is the multiplicity of $\rho$ in $\theta$. The index of the subfactor is finite if and only if the decomposition $\theta = \bigoplus \rho$ has finitely many summands.

Denote by $\Tr$ the canonical tracial weight on $\Hom(\theta,\theta)$ normalized such that $\Tr(1_{M_{n_\rho}(\CC)}) = n_\rho d(\rho)$. Then $\Tr$ is normal faithful and semifinite. Denote by $m_{\Tr}$ its domain.

\begin{defi}
Let $L^p(\widehat K)$, $1\leq p < \infty$, be the completion of $m_{\Tr}$ in the norm $\|x\|_p := \Tr(|x|^p)^{1/p}$, where $|x| = (x^*x)^{1/2} \in m_{\Tr}$.
Let $L^\infty(\widehat K) := \Hom(\theta,\theta)$ with the operator norm in the GNS representation with respect to $\Tr$, \ie $\|x\|_\infty := \|x\|_{\B(L^2(\widehat K))}$, 
or equivalently $\|x\|_\infty := \|x\|$ because $\Tr$ is normal faithful and semifinite. 
\end{defi}

For the classical (tracial) theory of noncommutative integration we refer to \cite{Ne1974}, \cite{TePhD}.

\begin{rmk}\label{rmk:TrisEhat}
By the proof of \cite[\Lem 7.3]{BiDeGi2021}, if $\N\subset\M$ is irreducible discrete and local (or finite index, or if it fulfills the condition $a_\rho = 1_{H_\rho}$ where the operators $a_\rho$ are introduced in \cite[\Sec 3]{IzLoPo1998}),
then $\Tr$ on $\Hom(\theta,\theta)$ coincides with $\gamma \circ \widehat E \circ \gamma_1^{-1}$ where $\widehat E : \M_1 \to \M \subset\M_1$ is the dual operator-valued weight of $E$, here restricted to $\N'\cap\M_1$.
\end{rmk}

By \cite[\Prop 4.15]{BiDeGi2021} the Fourier transform gives a one to one correspondence between \lqq trigonometric polynomials\rqq $\psi_{\rho,r}^* \bar\psi_{\rho,s}$ in $\Hom(\gamma,\gamma)$ and \lqq matrix units\rqq $w_{\rho,r} w_{\rho,s}^*$
in $\Hom(\theta,\theta)$. Namely,
$$\cF(\psi_{\rho,r}^* \bar\psi_{\rho,s}) = w_{\rho,r} w_{\rho,s}^*.$$
Trigonometric polynomials and matrix units are defined as follows:

\begin{notation}\label{not:piporhor}
Choose a Pimsner--Popa basis of charged fields $\{\psi_{\rho,r}\}$ for $\N\subset\M$ (see Section \ref{sec:condexp}) labelled by the inequivalent irreducible subendomorphisms $\rho \prec \theta$ and by a multiplicity counting index $r = 1,\ldots, n_\rho$.

Matrix units are then defined by $w_{\rho,r} w_{\rho,s}^*$ where $w_{\rho,r} := \gamma(\psi_{\rho,r}^*)w$ is an isometry in $\Hom(\rho,\theta)$. They have been exploited in \cite[\Sec 4.1]{BiDeGi2021}. 

Trigonometric polynomials are defined by $\psi_{\rho,r}^* \bar\psi_{\rho,s}$ where $\bar\psi_{\rho,s}$ is a so-called dual field. Namely, $\bar\psi_{\rho,s} := w_{\rho,s}^* m = w^* \bar\iota(\psi_{\rho,s})m$ where $m := \sum_{\rho,r} \theta(w_{\rho,r})\gamma(\psi_{\rho,r})$ is only a formal sum when $\theta = \bigoplus \rho$ is an infinite direct sum. In \cite[\Sec 2.4]{BiDeGi2021}, it is shown that 
$\bar\psi_{\rho,s}$ is a well defined operator in $\N$ and it belongs to $\Hom(\iotabar,\rho\iotabar)$. 
\end{notation}

The matrix units are dense in $\Hom(\theta,\theta) \cong \bigoplus_{[\rho]} M_{n_\rho}(\CC)$ in the weak operator topology, hence:

\begin{prop}\label{prop:Fhasdenserange}
The subfactor theoretical Fourier transform $\cF$ has dense range, \ie, in the previous notation, $\cF(L^\infty(K))$ is dense in $L^\infty(\widehat K)$ in the weak operator topology.
\end{prop}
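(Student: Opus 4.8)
The plan is to read the statement off directly from the correspondence between trigonometric polynomials and matrix units recalled in Notation \ref{not:piporhor}, together with the block decomposition $\Hom(\theta,\theta) \cong \bigoplus_{[\rho]} M_{n_\rho}(\CC)$ from \cite[\Thm 3.3]{IzLoPo1998}. By construction each trigonometric polynomial $\psi_{\rho,r}^* \bar\psi_{\rho,s}$ lies in $\Hom(\gamma,\gamma) = L^\infty(K)$, and by \cite[\Prop 4.15]{BiDeGi2021} the Fourier transform sends it to the matrix unit $w_{\rho,r} w_{\rho,s}^*$ in $\Hom(\theta,\theta) = L^\infty(\widehat K)$, i.e.\ $\cF(\psi_{\rho,r}^* \bar\psi_{\rho,s}) = w_{\rho,r} w_{\rho,s}^*$. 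Since $\cF$ is linear, its image $\cF(L^\infty(K))$ therefore contains the linear span of all matrix units $\{w_{\rho,r} w_{\rho,s}^* : [\rho],\ 1\le r,s\le n_\rho\}$.

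It then remains to observe that this linear span is dense in $L^\infty(\widehat K)$ in the weak operator topology. For each inequivalent class $[\rho]$ the $w_{\rho,r} w_{\rho,s}^*$ span the finite-dimensional block $M_{n_\rho}(\CC)$, so the linear span of all matrix units is exactly the algebraic direct sum of the blocks, consisting of the finitely supported elements of $\bigoplus_{[\rho]} M_{n_\rho}(\CC)$. To see this is weakly dense, fix $x\in\Hom(\theta,\theta)$ and let $p_n$ be the central projection onto the first $n$ summands. The truncations $p_n x$ are finitely supported, hence finite linear combinations of matrix units lying in $\cF(L^\infty(K))$, and they satisfy $\|(x - p_n x)\xi\| = \|(1-p_n)x\xi\| \to 0$ for every $\xi$ in the GNS space $L^2(\widehat K)$ of $\Tr$, because $x\xi\in L^2(\widehat K)$ and $1-p_n\to 0$ strongly. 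Thus $p_n x \to x$ in the strong, hence weak, operator topology, and since a superset of a dense set is dense, $\cF(L^\infty(K))$ is weakly dense.

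There is essentially no serious obstacle here: the structural input has already been carried out in \cite[\Prop 4.15]{BiDeGi2021} and in the identification of $\Hom(\theta,\theta)$ with a direct sum of matrix algebras. The only point worth stressing is that $\cF$ need not be surjective onto $L^\infty(\widehat K)$ when the index is infinite—indeed $\cF^{-1}$ is not globally defined in that case—so one obtains only weak density of the range rather than surjectivity. The finite-support truncation argument above is precisely what upgrades the image of the matrix units to a weakly dense subspace, and it is the step to state carefully since it replaces the naive finite-index reasoning.
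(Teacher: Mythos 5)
Your proposal is correct and follows essentially the same route as the paper: the paper's proof consists precisely of the observation that $\cF$ maps trigonometric polynomials to matrix units (citing \cite[\Prop 4.15]{BiDeGi2021}) and that the matrix units are weakly dense in $\Hom(\theta,\theta)\cong\bigoplus_{[\rho]}M_{n_\rho}(\CC)$. Your truncation argument with the central projections $p_n$ simply fills in the density claim that the paper leaves as an assertion, and it is valid.
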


\begin{prop}[Parseval's identity]\label{prop:Parseval}
The Hilbert spaces $L^2(K)$ with inner product defined by completion of $(x|y)_{L^2(K)} := w^* x^* y w = E(x^*y)$, $x,y\in \Hom(\gamma,\gamma)$, and the Hilbert space $L^2(\widehat K)$ with inner product defined by completion of $(x|y)_{L^2(\widehat K)} := \Tr(x^*y)$, $x,y\in m_{\Tr}$, are isomorphic via the Fourier transform:
$$(x|y)_{L^2(K)} = (\cF(x)|\cF(y))_{L^2(\widehat K)}.$$
In particular,
$$\|x\|_2 = \|\cF(x)\|_2.$$
\end{prop}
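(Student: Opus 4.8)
The plan is to reduce the identity to the linearly spanning families furnished by Proposition \ref{prop:Fhasdenserange}, and then extend by sesquilinearity and continuity. Both $(\slot|\slot)_{L^2(K)}$ and $(\cF(\slot)|\cF(\slot))_{L^2(\widehat K)}$ are sesquilinear (antilinear in the first variable, linear in the second), and $\cF$ is linear, so it suffices to verify equality on the trigonometric polynomials $\psi_{\rho,r}^*\bar\psi_{\rho,s}$ of Notation \ref{not:piporhor}. By \cite[\Prop 4.15]{BiDeGi2021} these are carried by $\cF$ to the matrix units $w_{\rho,r}w_{\rho,s}^*$, whose linear span is $\sigma$-weakly dense in $\Hom(\theta,\theta)$ and $\|\slot\|_2$-dense in $L^2(\widehat K)$; I would deduce the corresponding $L^2(K)$-density of the trigonometric polynomials from this $\sigma$-weak density together with normality of the state $E$.

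On the $\widehat K$-side I would use the matrix-unit relations $w_{\rho,r}^*w_{\sigma,p} = \delta_{[\rho],[\sigma]}\delta_{r,p}1$ (the $w_{\rho,r}$ are isometries in $\Hom(\rho,\theta)$ with ranges orthogonal across inequivalent sectors) and the normalization $\Tr(1_{M_{n_\rho}(\CC)}) = n_\rho d(\rho)$, which gives $\Tr(w_{\rho,s}w_{\rho,q}^*) = \delta_{s,q}d(\rho)$. Writing $x = \psi_{\rho,r}^*\bar\psi_{\rho,s}$ and $y = \psi_{\sigma,p}^*\bar\psi_{\sigma,q}$, this yields
$$(\cF(x)|\cF(y))_{L^2(\widehat K)} = \Tr(w_{\rho,s}w_{\rho,r}^*w_{\sigma,p}w_{\sigma,q}^*) = \delta_{[\rho],[\sigma]}\delta_{r,p}\delta_{s,q}\,d(\rho).$$

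On the $K$-side I would compute $x^*y = \bar\psi_{\rho,s}^*\,\psi_{\rho,r}\psi_{\sigma,p}^*\,\bar\psi_{\sigma,q}$, with the dual fields $\bar\psi$ regarded as elements of $\iota(\N)$. Applying $\N$-bimodularity of $E$ and the orthonormality of the charged fields $E(\psi_{\rho,r}\psi_{\sigma,p}^*) = \delta_{[\rho],[\sigma]}\delta_{r,p}1$ gives
$$(x|y)_{L^2(K)} = E(x^*y) = \delta_{[\rho],[\sigma]}\delta_{r,p}\,\bar\psi_{\rho,s}^*\bar\psi_{\rho,q},$$
and the dual-field normalization $\bar\psi_{\rho,s}^*\bar\psi_{\rho,q} = \delta_{s,q}\,d(\rho)1$ from \cite[\Sec 2.4]{BiDeGi2021} produces the same scalar $\delta_{[\rho],[\sigma]}\delta_{r,p}\delta_{s,q}\,d(\rho)$, matching the $\widehat K$-side.

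Once the identity is established on trigonometric polynomials, $\cF$ restricts to an isometry from their dense linear span in $L^2(K)$ onto the dense span of matrix units in $L^2(\widehat K)$, hence extends to a Hilbert space isomorphism, and $(x|y)_{L^2(K)} = (\cF(x)|\cF(y))_{L^2(\widehat K)}$ for all $x,y$ follows by continuity; putting $y = x$ gives $\|x\|_2 = \|\cF(x)\|_2$. I expect the genuine obstacle to be the $K$-side computation, namely pinning down the precise normalizations of the charged and dual fields so that the scalar $E(x^*y)$ coincides with $\Tr$ of the associated matrix unit, and justifying the $L^2$-density of the trigonometric polynomials needed for the final continuity argument.
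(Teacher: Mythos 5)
Your overall strategy coincides with the paper's: reduce by sesquilinearity and density to the trigonometric polynomials $\psi_{\rho,r}^*\bar\psi_{\rho,s}$ and their images, the matrix units $w_{\rho,r}w_{\rho,s}^*$, and check that both inner products give $\delta_{[\rho],[\sigma]}\delta_{r,p}\delta_{s,q}\,d(\rho)$. Your $\widehat K$-side computation and the final extension-by-continuity step are exactly what the paper does, and your reduction of the $K$-side to $\delta_{[\rho],[\sigma]}\delta_{r,p}\,\bar\psi_{\rho,s}^*\bar\psi_{\rho,q}$ via bimodularity of $E$ and orthonormality of the charged fields is also fine (note that $\bar\psi_{\rho,s}^*\bar\psi_{\rho,q}$ lies in $\Hom(\iotabar,\iotabar)\cong\M'\cap\M_1=\CC 1$, so it is indeed a scalar — this already uses irreducibility).

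The gap is the dual-field orthogonality relation $\bar\psi_{\rho,s}^*\bar\psi_{\rho,q}=\delta_{s,q}\,d(\rho)1$, which you quote from \cite[\Sec 2.4]{BiDeGi2021}. That reference is invoked in Notation \ref{not:piporhor} only for the well-definedness of $\bar\psi_{\rho,s}=w_{\rho,s}^*m$ as an operator in $\N$ and for its intertwining property $\bar\psi_{\rho,s}\in\Hom(\iotabar,\rho\iotabar)$; the orthogonality relation is not a stated normalization one can impose, and establishing it is precisely the content of the paper's proof. Concretely, the paper expands $w^*m^*w_{\rho,s}w_{\rho,s'}^*mw$ using $m=\sum_{\sigma,t}\theta(w_{\sigma,t})\gamma(\psi_{\sigma,t})$ to obtain $\sum_{\sigma,t}E(\psi_{\sigma,t}^*\psi_{\rho,s}^*)E(\psi_{\rho,s'}\psi_{\sigma,t})$, then uses the \emph{locality} commutation relations $\psi_{\rho,s}\psi_{\sigma,t}=\varepsilon_{\sigma,\rho}\psi_{\sigma,t}\psi_{\rho,s}$ to reorder the factors so that the Pimsner--Popa expansion collapses the sum to $E(\psi_{\rho,s}^*\psi_{\rho,s'})=\delta_{s,s'}d(\rho)$. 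So the step you have black-boxed is not a bookkeeping normalization: it genuinely requires locality (the unitarity of the braiding makes the two $\varepsilon$'s cancel), and without that input the sum over $\sigma,t$ does not telescope. You correctly identified this as the crux, but as written the proposal defers the entire substance of the proposition to an external citation that does not carry it.
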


\begin{proof}
By density, it is enough to check the equality of the two inner products on trigonometric polynomials $\psi_{\rho,r}^* \bar\psi_{\rho,s}$ and matrix units $\cF(\psi_{\rho,r}^* \bar\psi_{\rho,s}) = w_{\rho,r} w_{\rho,s}^*$.

The inner product on trigonometric polynomials in $L^2(K)$ reads:
\begin{align}
w^* \bar\psi_{\rho,s}^* \psi_{\rho,r} \psi_{\rho',r'}^* \bar\psi_{\rho',s'} w &= w^* \bar\psi_{\rho,s}^* E(\psi_{\rho,r} \psi_{\rho',r'}^*) \bar\psi_{\rho',s'} w\\
&= \delta_{\rho,\rho'} \delta_{r,r'} w^* m^* w_{\rho,s} w_{\rho,s'}^*m  w\\
&= \delta_{\rho,\rho'} \delta_{r,r'} w^* \sum_{\sigma',t'} \gamma(\psi_{\sigma',t'}^*) \theta(w_{\sigma',t'}^*)  w_{\rho,s} w_{\rho,s'}^*\sum_{\sigma'',t''} \theta(w_{\sigma'',t''})\gamma(\psi_{\sigma'',t''}) w\\
&= \delta_{\rho,\rho'} \delta_{r,r'} \sum_{\sigma,t} E(\psi_{\sigma,t}^* \psi_{\rho,s}^*) E(\psi_{\rho,s'}\psi_{\sigma,t})\\
&= \delta_{\rho,\rho'} \delta_{r,r'} \sum_{\sigma,t} E(\psi_{\rho,s}^* \psi_{\sigma,t}^*) E(\psi_{\sigma,t} \psi_{\rho,s'})\\
&= \delta_{\rho,\rho'} \delta_{r,r'} \sum_{\sigma',\sigma'',t',t''} E(E(\psi_{\rho,s}^* \psi_{\sigma',t'}^*) \psi_{\sigma',t'}\psi_{\sigma'',t''}^* E(\psi_{\sigma'',t''} \psi_{\rho,s'}))\\
&= \delta_{\rho,\rho'} \delta_{r,r'} E(\psi_{\rho,s}^* \psi_{\rho,s'})\\
&= \delta_{\rho,\rho'} \delta_{r,r'} \delta_{s,s'} d(\rho)
\end{align}
where we used the definition of the dual fields $\bar\psi_{\rho,s} = w_{\rho,s}^* m$ with $m = \sum_{\sigma,t} \theta(w_{\sigma,t})\gamma(\psi_{\sigma,t})$, the intertwining and orthonormality properties $w_{\sigma,r} \in \Hom(\sigma,\theta)$, $w_{\sigma,r}^*w_{\sigma',r'} = \delta_{\sigma,\sigma'} \delta_{r,r'} 1$, the locality commutation relations $\psi_{\rho,r}\psi_{\sigma,t} = \varepsilon_{\sigma,\rho} \psi_{\sigma,t} \psi_{\rho,r}$, the Pimsner--Popa expansion \cite[\Sec 2.2]{BiDeGi2021} and the normalization $\psi_{\rho,s}^* \psi_{\rho,s'} = \delta_{s,s'} d(\rho) 1$ \cite[\Sec 2.3]{BiDeGi2021}.

The inner product on matrix units in $L^2(\widehat K)$ also reads:
\begin{align}
\Tr((w_{\rho,r} w_{\rho,s}^*)^* (w_{\rho',r'} w_{\rho',s'}^*)) &= \Tr(w_{\rho,s} w_{\rho,r}^* w_{\rho',r'} w_{\rho',s'}^*)\\
&= \delta_{\rho,\rho'} \delta_{r,r'} \Tr(w_{\rho,s} w_{\rho,s'}^*)\\
&= \delta_{\rho,\rho'} \delta_{r,r'} \delta_{s,s'} d(\rho)
\end{align}
by the choice of normalization of $\Tr$ on $M_{n_\rho}(\CC)$.
\end{proof}

\begin{rmk}
For irreducible discrete depth 2 subfactors, the statements of the two previous propositions appear in \cite[\Thm 17]{NiWi1995}. 
\end{rmk}

\begin{prop}\label{prop:RiemannLebesgue}
$$\|\cF(x)\|_\infty \leq \|x\|_1.$$
\end{prop}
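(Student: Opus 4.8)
The plan is to reduce the statement to the elementary bound ``the Fourier transform of a bounded measure is dominated by its total variation,'' using the identification of $\cF$ with the hypergroup theoretical Fourier transform $\mathfrak{F}$ obtained in Section~\ref{sec:classicalF}. I would write $x\in\Hom(\gamma,\gamma)\cong L^\infty(K,\mu_E)$ as a function $f$ and set $\mu:=f\dd\mu_E\in M(K,\B(K))$. By Remark~\ref{rmk:embeddingiflocal} this is precisely the measure with $\phi_x=\phi_\mu$, so combining Proposition~\ref{prop:FhatisF} with the last proposition of Section~\ref{sec:classicalF} gives
\begin{align}
\cF(x)=\widehat{\cF}(\phi_x)=\widehat{\cF}(\phi_\mu)=\mathfrak{F}(\mu)=\bigoplus_{[\rho]}U_\rho(\mu),\qquad U_\rho(\mu)=\int_K U_\rho(k)\dd\mu(k).
\end{align}
Since the operator norm on $L^\infty(\widehat K)=\Hom(\theta,\theta)\cong\bigoplus_{[\rho]}M_{n_\rho}(\CC)$ is the supremum of the block norms, this lets me write $\|\cF(x)\|_\infty=\sup_{[\rho]}\|U_\rho(\mu)\|_{\B(H_\rho)}$ and then estimate each block separately.

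The main point, and the only step that is not purely formal, is the contractivity of the operators $U_\rho(k):=U_\rho(\delta_k)$ on the finite dimensional space $H_\rho$. By Proposition~\ref{prop:intertHrhoHomrhobar} it is enough to verify this for the unitarily equivalent $\pi_\rho$, for which $\pi_\rho(\delta_k)\psi=\phi_{\delta_k}(\psi)$ with $\phi:=\phi_{\delta_k}\in K$ an $\N$-bimodular ucp map. Here the Kadison--Schwarz inequality applied to $\psi^*$ gives $\phi(\psi)\phi(\psi)^*=\phi(\psi^*)^*\phi(\psi^*)\le\phi(\psi\psi^*)$, while $E\circ\phi=E$ holds by uniqueness of the conditional expectation (as $E\circ\phi$ is again an $\N$-bimodular ucp map onto $\N$). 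Hence, in the inner product $(\psi_1,\psi_2)=E(\psi_2\psi_1^*)$ of $H_\rho$,
\begin{align}
(\phi(\psi),\phi(\psi))=E\big(\phi(\psi)\phi(\psi)^*\big)\le E\big(\phi(\psi\psi^*)\big)=E(\psi\psi^*)=(\psi,\psi),
\end{align}
so $\|\pi_\rho(\delta_k)\|\le1$, and therefore $\|U_\rho(k)\|\le1$.

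With contractivity established, the finite multiplicity $n_\rho=\dim H_\rho<\infty$ turns the remaining estimate into an elementary matrix-valued integral bound: for each $[\rho]$,
\begin{align}
\|U_\rho(\mu)\|=\Big\|\int_K U_\rho(k)\dd\mu(k)\Big\|\le\int_K\|U_\rho(k)\|\dd|\mu|(k)\le|\mu|(K).
\end{align}
Finally, since $\mu_E$ is positive, the total variation is $|\mu|(K)=\int_K|f|\dd\mu_E=E(|x|)=\|x\|_1$ by Proposition~\ref{prop:gammagammaisLinfty}. Taking the supremum over $[\rho]$ yields $\|\cF(x)\|_\infty\le\|x\|_1$, as claimed. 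As a consistency check, for positive $x$ one has $\phi_x=E(x)\,\widetilde\phi$ with $\widetilde\phi$ ucp, hence $V_{\phi_x}=E(x)V_{\widetilde\phi}$ and $\|V_{\widetilde\phi}\|=1$, so there the inequality is in fact the equality $\|\cF(x)\|_\infty=\|x\|_1$; the general case is genuinely an inequality because $\cF$ is linear but does not respect the $L^1$ norm under the polar decomposition of $f$, which is exactly why the representation-theoretic detour through $\mathfrak{F}$ is needed.
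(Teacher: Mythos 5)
Your proof is correct, but it takes a genuinely different route from the paper's. The paper argues entirely on the measure side: for positive $x$ with $w^*xw=1$ the map $\phi_x$ is ucp, so $\|\cF(x)\|_\infty=\|\gamma_1(V_{\phi_x})\|=1=\|x\|_1$ by Proposition \ref{prop:FhatisF}; the general case is then reduced to the positive one by identifying $x$ with $f\in L^\infty(K,\mu_E)$, writing the polar decomposition $f=\nu|f|$, approximating the phase $\nu$ uniformly by unimodular simple functions, splitting $f$ into finitely many positive pieces and applying the triangle inequality, and finally passing to the limit using $\|\cF(f_n-f)\|_\infty\le\|f_n-f\|_\infty$. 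As the authors point out in the remark that follows, that argument needs only the commutativity of $\Hom(\gamma,\gamma)$, not the hypergroup structure of $K$. You instead take the representation-theoretic route that the same remark mentions as an alternative: identify $\cF(x)$ with $\mathfrak{F}(f\dd\mu_E)$, reduce to contractivity of $\pi_\rho(\delta_k)$ on each finite-dimensional block, and conclude with the total-variation bound for matrix-valued integrals. Your derivation of contractivity from the Kadison--Schwarz inequality together with $E\circ\phi=E$ (which indeed follows from uniqueness of the expectation for the irreducible inclusion) is the one genuinely analytic step and is carried out correctly. The trade-off is clear: your argument is the conceptually transparent transcription of the classical bound $\|\widehat{\mu}\|_\infty\le|\mu|(K)$, but it invokes the duality theorem and the classification of the irreducible representations of $K$ from \cite{BiDeGi2021}, whereas the paper's proof gets by with much less machinery. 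One cosmetic point: $U_\rho$ acts on $\Hom(\rhobar,\theta)$ rather than on $H_\rho$; the norms agree via the unitary intertwiner of Proposition \ref{prop:intertHrhoHomrhobar}, which is what you implicitly use when you transfer the estimate from $\pi_\rho$ to $U_\rho$.
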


\begin{proof}
For positive elements $x$ in $\Hom(\gamma,\gamma)$ normalized such that $w^* x w = 1$, we have 
$$\|\cF(x)\|_\infty = \|\widehat\cF(\phi_x)\|_\infty = \|\gamma_1(V_x)\| = \|V_x\| = 1$$ 
by Proposition \ref{prop:FhatisF}, and $\|x\|_1 = 1$ by definition. Thus on positive elements $\|\cF(x)\|_\infty = \|x\|_1$, \footnote{In the case of the classical Fourier transform this holds as $\|f\|_1 = \int f(x) \dd x = \widehat f(0) \leq \|\widehat f\|_\infty \leq \|f\|_1$ for positive $f$.}. For an arbitrary element $x$ in $\Hom(\gamma,\gamma)$ we need its identification with a function $f$ in $L^\infty(K, \mu_E)$ and a modification of the Hahn-Jordan decomposition theorem adapted to complex bounded measures. Let $f = \nu |f|$ be the polar decomposition of $f$ with $\nu\in L^\infty(K, \mu_E)$, $|\nu| =1$. Let $\nu_n\in L^\infty(K, \mu_E)$, be a uniform (by boundedness) approximation of $\nu$ by simple measurable functions \cite[\Thm 1.17]{RudRC}. As $|\nu_n| \to |\nu| = 1$, $n\to \infty$, we can divide and assume $|\nu_n| = 1$ for every $n$. Define $f_n := \nu_n |f|$ or equivalently $f_n := \sum_{m} \nu_n^m \chi_{K_n^m} |f|$ where $\nu_n^m$ runs over the finitely many different values of each $\nu_n$ and $\chi_{K_n^m}$ is the characteristic function of $K_n^m := \{k\in K:\, \nu_n(k) = \nu_n^m\}$. Fixed $n$, the sets $K_n^m$ are pairwise disjoint and $|f| = |f_n|= \sum_m \chi_{K_n^m} |f|$. Then
$$\|\cF(f_n)\|_\infty = \|\sum_{m} \nu_n^m \cF(\chi_{K_n^m} |f|)\|_\infty \leq \sum_{m} \|\chi_{K_n^m} |f|\|_1 = \|f\|_1$$
and by $\|\cF(f_n - f) \|_\infty \leq \|f_n - f\|_\infty$, \footnote{The inequality $\|\widehat f\|_\infty \leq \|f\|_\infty$ holds in the case of the classical Fourier transform because we are integrating with respect to a probability measure.}, where the $L^\infty$-norms both coincide with the operator norm in $\B(\Hil)$, together with $f_n \to f$ in $L^\infty(K, \mu_E)$ we get the statement.
\end{proof}

\begin{rmk}
The previous proposition is also a consequence of the identification of the subfactor theoretical Fourier transform with the classical Fourier transform on subfactor theoretical compact hypergroups, see Section \ref{sec:classicalF}. Note that in the previous proof we only need the fact that $\Hom(\gamma,\gamma)$ is commutative, thus identified with functions on a probability space, not its hypergroup structure.
\end{rmk}

\begin{rmk}
The weaker bound $\|\cF(x)\|_\infty \leq \|x\|_2$ can be proven without the identification of $\Hom(\gamma,\gamma)$ with $L^\infty(K, \mu_E)$ as follows.
By the \Cstar-identity, for every $x\in\Hom(\gamma,\gamma)$ it holds
\begin{align}
\|\cF(x)\|_\infty &= \|\cF(x)\cF(x)^*\|^{1/2}\\
&= \|\theta(w)^* \gamma(x) ww^* \gamma(x)^* \theta(w)\|^{1/2}\\
&\leq \|\theta(w)^* \gamma(xx^*) \theta(w)\|^{1/2}\\
&= \|\gamma(w^* xx^* w)\|^{1/2}\\
&= (w^* x^*x w)^{1/2} = \|x\|_2 
\end{align}
because $ww^*$ is a projection, thus $ww^* \leq 1$, and $0 \leq a \leq b$ implies $\|a\| \leq \|b\|$, because $\gamma$ is isometric (unital is enough) and $xx^* = x^*x$ by commutativity. The same proof holds for finite index irreducible subfactors, not necessarily local. Indeed, by \cite[\Lem 3.7]{LoRo1997}, \cite[\Prop 2.4]{BiKaLoRe2014-2}, \cite[\Prop 8.33]{GiLo2019}, $x \mapsto w^*xw$ is a trace on $\Hom(\gamma,\gamma)$, which needs no longer be commutative. See also \cite[\Prop 4.7]{JiLiWu2016} for a proof of $\|\cF(x)\|_\infty \leq \|x\|_1$ for arbitrary finite index irreducible subfactors.
\end{rmk}

Thanks to the alternative description of $L^p(K)$ and $L^p(\widehat K)$ as complex interpolation spaces \cite{Ko1984}, \cite{Te1982}, in the special case of tracial states and weights, by \cite[\Thm 1.2, \Rmk 3.4]{Ko1984} (see references therein) and by Proposition \ref{prop:Parseval} and Proposition \ref{prop:RiemannLebesgue} we get:

\begin{prop}[Hausdorff--Young inequality]\label{prop:Hausdorff-Young}
$$\|\cF(x)\|_p \leq \|x\|_q$$
for $2\leq p\leq \infty$, $1\leq q\leq 2$ and $1/p + 1/q = 1$.
\end{prop}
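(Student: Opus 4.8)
The plan is to deduce the full range of inequalities by complex interpolation between the two endpoint estimates already established, so the first step is to restate those endpoints as boundedness statements for $\cF$ between $L^p$-scales. Parseval's identity (Proposition \ref{prop:Parseval}) says precisely that $\cF$ extends to an isometry $L^2(K)\to L^2(\widehat K)$, hence is bounded of norm $1$ for the pair of exponents $(q_0,p_0)=(2,2)$. The Riemann--Lebesgue estimate (Proposition \ref{prop:RiemannLebesgue}), $\|\cF(x)\|_\infty\leq\|x\|_1$, says that $\cF$ extends to a bounded map $L^1(K)\to L^\infty(\widehat K)$ of norm at most $1$, for the pair $(q_1,p_1)=(1,\infty)$.

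Next I would invoke the description of the tracial noncommutative $L^p$-spaces as complex interpolation scales. The functional $x\mapsto w^*xw = E(x)$ is a normal faithful state on the commutative algebra $\Hom(\gamma,\gamma)$ (Lemma \ref{lem:Estate}), hence a finite trace there, while $\Tr$ is a normal faithful semifinite trace on $\Hom(\theta,\theta)$. In this tracial situation the results of Kosaki and Terp \cite{Ko1984}, \cite{Te1982}, specialised as in \cite[\Thm 1.2, \Rmk 3.4]{Ko1984}, identify $L^p(K)$ as the complex interpolation space between $L^\infty(K)$ and $L^1(K)$ at parameter $1/p$, and likewise for $L^p(\widehat K)$. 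This exhibits both the domain and the target of $\cF$ as compatible interpolation couples.

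Then I would apply the complex interpolation theorem (Riesz--Thorin--Stein) to $\cF$, viewed as a morphism of these couples, consistently defined on the common dense domain of trigonometric polynomials, where it sends $\psi_{\rho,r}^*\bar\psi_{\rho,s}$ to the matrix unit $w_{\rho,r}w_{\rho,s}^*$ (\cf \cite[\Prop 4.15]{BiDeGi2021}). For $\theta\in[0,1]$, interpolating between $(q_0,p_0)=(2,2)$ and $(q_1,p_1)=(1,\infty)$ gives
\begin{align}
\frac1q = \frac{1-\theta}{2}+\theta = \frac{1+\theta}{2}, \qquad \frac1p = \frac{1-\theta}{2},
\end{align}
so that $1/p+1/q=1$ automatically, while $q$ runs over $[1,2]$ and $p$ over $[2,\infty]$ as $\theta$ runs over $[0,1]$, matching the stated range. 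The interpolated operator norm is bounded by $1^{1-\theta}\cdot 1^\theta = 1$, which is exactly $\|\cF(x)\|_p\leq\|x\|_q$.

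The main obstacle is the bookkeeping needed before interpolation can be invoked: one must check that $\cF$ genuinely defines a single bounded morphism of the two interpolation couples, i.e.\ that its $L^2$-isometric extension and its $L^1\to L^\infty$ extension agree on the common dense subspace $\Span\{\psi_{\rho,r}^*\bar\psi_{\rho,s}\}$ and therefore patch to a compatible family. Here this is essentially automatic, since both extensions descend from the one densely defined map on trigonometric polynomials, and it is precisely the point where the dense-range statement (Proposition \ref{prop:Fhasdenserange}) and the explicit action on matrix units enter. The remaining analytic content — that the tracial $L^p$-spaces form complex interpolation scales — is used as a black box from \cite{Ko1984}, \cite{Te1982}, and only the tracial case is required, which is available because $E$ restricts to a trace on $\Hom(\gamma,\gamma)$ and $\Tr$ is a trace on $\Hom(\theta,\theta)$.
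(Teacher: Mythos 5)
Your proposal is correct and follows essentially the same route as the paper: the paper likewise obtains the inequality by complex interpolation (via the Kosaki--Terp identification of the tracial noncommutative $L^p$-spaces as interpolation spaces) between the Parseval endpoint $L^2(K)\to L^2(\widehat K)$ and the Riemann--Lebesgue endpoint $L^1(K)\to L^\infty(\widehat K)$. Your additional remarks on checking compatibility of the two extensions on the dense span of trigonometric polynomials only make explicit a point the paper leaves implicit.
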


\begin{rmk}
The Hausdorff--Young inequality is a classical result for the Fourier analysis on groups. Recent proofs of the inequality for DJS hypergroups appear in \cite{De2013} for commutative hypergroups and in \cite{KuSa2020}, \cite{KuRu2020-arxiv}.
\end{rmk}

\subsection{Involutions, convolutions and products}\label{sec:invconv}

On the von Neumann algebra $L^\infty(K) (= \Hom(\gamma,\gamma))$ we have the ordinary unital *-algebra structure given by $(1,\cdot,{}^*)$, namely the unit operator, the multiplication and adjoint operations in $\B(\Hil)$. Likewise on $L^\infty(\widehat K) (= \Hom(\theta,\theta))$. In the \emph{absence} of a globally defined inverse subfactor theoretical Fourier transform $\cF^{-1} : \Hom(\theta,\theta) \to \Hom(\gamma,\gamma)$ for infinite index subfactors, see Remark \ref{rmk:inverseF}, we use the embedding $x \in L^\infty(K) \mapsto \phi_x := w^* x \gamma(\slot) w \in M(K) (= \Span_\CC(\UCP_\N(\M)))$ to give $L^\infty(K)$ a second *-algebra structure. In Proposition \ref{prop:Fisstarhomo}, we show that this second *-algebra structure has the right properties with respect to $\cF$. 

\begin{defi}
For $\phi_1$, $\phi_2 \in M(K)$, let $\phi_1 \ast \phi_2\in M(K)$ and $\phi_1^\sharp\in M(K)$ be the convolution and the involution of measures, defined respectively by the composition of ucp maps $\phi_1 \circ \phi_2$ and by the $\Omega$-adjoint of $\phi_1$ considered in Definition \ref{def:Omegaadj}.
\end{defi}

\begin{defi}\label{def:convoinvoinLinftyK}
For $x,y\in L^\infty(K)$, let $x \ast y := w^* x \gamma(y) w = w^* \gamma(y) x w \in L^\infty(K)$, namely the operator corresponding to $\phi_x \ast \phi_y$, and let $x^\sharp \in L^\infty(K)$ be the operator corresponding to $\phi_x^\sharp$.

We call $x\ast y$ \textbf{convolution} and $x^\sharp$ \textbf{involution} in $L^\infty(K)$.
\end{defi}

Inside $L^\infty(\widehat K) \cong \bigoplus_{[\rho]} M_{n_\rho}(\CC)$ one can consider the *-subalgebra of finite rank operators denoted by $\Trig(\N\subset\M)$ in \cite[\Sec 4.1]{BiDeGi2021}. Note that $\Trig(\N\subset\M) \subset m_{\Tr}$. 

\begin{rmk}
Recall the Pimsner--Popa basis of charged fields $\{\psi_{\rho,r}\}$ from Notation \ref{not:piporhor}.
Let $m_{\rho,r} \in \Hom(\theta, \theta^2)$ be defined by $m_{\rho,r} := \theta(w_{\rho,r}) \gamma(\psi_{\rho,r})$.
The formal sum $m = \sum_{\rho,r} m_{\rho,r}$ (infinite when the index of the subfactor is infinite) together with its formal adjoint $m^*$ play the role of \emph{comultiplication} and \emph{multiplication} for the algebra object $\theta$ describing the extension $\N\subset\M$. 
\end{rmk}

In \cite[\Thm 4.13]{BiDeGi2021}, it is shown that $\Trig(\N\subset\M)$ is an associative unital (commutative by locality) *-algebra with the following operations:

\begin{defi}
For $x,y\in \Trig(\N\subset\M) \subset L^\infty(\widehat K)$, let $x \ast y := m^* a \theta(b) m \in \Trig(\N\subset\M)$ and $x^\bullet := \theta(w^*m^*) \theta(a^*) mw = w^*m^* \theta(a^*) \theta(mw) \in \Trig(\N\subset\M)$ be respectively the multiplication and the involution in $\Trig(\N\subset\M)$. The unit is given by the Jones projection $e := ww^*$. 

We call $x\ast y$ \textbf{convolution} and $x^\bullet$ \textbf{involution} when $x$ and $y$ are thought of as elements in $L^\infty(\widehat K)$. 
\end{defi}

\begin{defi}
Denote by $\Trig(K)$ the set of $x\in L^\infty(K)$ such that $\cF(x)\in \Trig(\N\subset\M)$. 
\end{defi}

$\Trig(K)$ is dense in $L^\infty(K)$ in the weak operator topology and in $L^2(K)$ in the $L^2$-norm topology. 
Moreover, $\Trig(K) = L^\infty(K)$ if and only if the subfactor has finite index.

\begin{prop}\label{prop:Fisstarhomo}
For $x,y \in \Trig(K)$, we have
$$\cF(xy) = \cF(y) \ast \cF(x), \quad \cF(x^*) = \cF(x)^\bullet, \quad \cF(1) = e$$
where note that $xy = yx$ and $\cF(y) \ast \cF(x) = \cF(x) \ast \cF(y)$.

For $x,y \in L^\infty(K)$, we have
$$\cF(x\ast y) = \cF(x) \cF(y), \quad \cF(x^\sharp) = \cF(x)^*.$$
\end{prop}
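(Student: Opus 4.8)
The plan is to treat the two groups of identities separately, since the convolution $\ast$ and the involution $\sharp$ on $L^\infty(K)$ are of a genuinely different nature from the ordinary product and adjoint, and only the latter see the finite-dimensional block structure of $\Hom(\theta,\theta)$. The second group, for arbitrary $x,y\in L^\infty(K)$, is the easy one and I would dispose of it first through the extended transform $\widehat{\cF}$. By Proposition \ref{prop:FhatisF} we have $\cF(z)=\widehat{\cF}(\phi_z)=\gamma_1(V_{\phi_z})$ for every $z\in L^\infty(K)$, while by Definition \ref{def:convoinvoinLinftyK} the elements $x\ast y$ and $x^\sharp$ are characterized by $\phi_{x\ast y}=\phi_x\circ\phi_y$ and $\phi_{x^\sharp}=\phi_x^\sharp$. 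Since $\gamma_1$ is a normal $*$-endomorphism and $V_{\phi_1\circ\phi_2}=V_{\phi_1}V_{\phi_2}$, $V_{\phi^\sharp}=V_\phi^*$ by \eqref{eq:propsV}, the computation is immediate:
\[
\cF(x\ast y)=\gamma_1(V_{\phi_x\circ\phi_y})=\gamma_1(V_{\phi_x})\gamma_1(V_{\phi_y})=\cF(x)\cF(y),
\]
\[
\cF(x^\sharp)=\gamma_1(V_{\phi_x^\sharp})=\gamma_1(V_{\phi_x})^*=\cF(x)^*.
\]
No density argument is needed, as these hold on all of $L^\infty(K)$.

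For the first group, $\cF(xy)=\cF(y)\ast\cF(x)$, $\cF(x^*)=\cF(x)^\bullet$ and $\cF(1)=e$ for $x,y\in\Trig(K)$, I would argue on the distinguished spanning sets. By \cite[\Prop 4.15]{BiDeGi2021}, $\cF$ restricts to a linear bijection of $\Trig(K)$ onto $\Trig(\N\subset\M)$ carrying the trigonometric polynomials $\psi_{\rho,r}^*\bar\psi_{\rho,s}$ to the matrix units $w_{\rho,r}w_{\rho,s}^*$, so it suffices to check the three identities on these generators and extend by (conjugate\nobreakdash-)bilinearity, the relevant sums being finite on $\Trig$. Concretely, I would expand the product $(\psi_{\rho,r}^*\bar\psi_{\rho,s})(\psi_{\rho',r'}^*\bar\psi_{\rho',s'})$ using the Pimsner--Popa expansion, the definition $\bar\psi_{\rho,s}=w_{\rho,s}^* m$ of the dual fields, and the locality commutation relations, exactly as in the proof of Proposition \ref{prop:Parseval}; applying $\cF$ and matching the outcome against the convolution $m^* a\,\theta(b)\,m$ of the corresponding matrix units yields $\cF(xy)=\cF(x)\ast\cF(y)$, which equals $\cF(y)\ast\cF(x)$ by commutativity of $\ast$ (a consequence of locality). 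The adjoint identity is obtained the same way, by verifying that $(\psi_{\rho,r}^*\bar\psi_{\rho,s})^*$ is again a trigonometric polynomial whose image under $\cF$ is $(w_{\rho,r}w_{\rho,s}^*)^\bullet$, and $\cF(1)=e=ww^*$ is the single computation of $\theta(w)^*w$ against the unit of $\Trig(\N\subset\M)$. Since these generator-level identities are precisely the content of the $*$-algebra structure on $\Trig(\N\subset\M)$ built in \cite[\Thm 4.13]{BiDeGi2021}, this part may alternatively be read off directly from that theorem together with \cite[\Prop 4.15]{BiDeGi2021}.

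I expect the main obstacle to be the bookkeeping in the first group rather than any conceptual difficulty. The convolution $m^* a\,\theta(b)\,m$ is built from the comultiplication $m=\sum_{\rho,r}\theta(w_{\rho,r})\gamma(\psi_{\rho,r})$, which in infinite index is only a formal strongly convergent sum, so one must confirm that every manipulation stays inside $\Trig$, where only finitely many charged fields contribute and $m$ enters through finite partial sums. The locality relation is exactly what forces the ordinary product of two trigonometric polynomials to close again on $\Trig(K)$ and to reproduce the convolution of the associated matrix units; without it the product would neither remain among trigonometric polynomials nor be commutative. Beyond this, the argument is the same braided Pimsner--Popa calculus already carried out for Parseval's identity, so I do not anticipate further difficulties.
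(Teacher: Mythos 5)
Your proposal is correct and follows essentially the same route as the paper: the identities $\cF(x\ast y)=\cF(x)\cF(y)$ and $\cF(x^\sharp)=\cF(x)^*$ are obtained exactly as in the paper via $\widehat\cF$, Proposition \ref{prop:FhatisF} and \eqref{eq:propsV}, while for $\cF(xy)=\cF(y)\ast\cF(x)$ and $\cF(x^*)=\cF(x)^\bullet$ the paper simply cites \cite[\Sec 4.3]{BiDeGi2021}, which is precisely the material (\cite[\Thm 4.13, \Prop 4.15]{BiDeGi2021}) you invoke, with your generator-level computation being the content of that reference rather than a new argument. Your direct evaluation $\cF(1)=\theta(w)^*w=ww^*$ (using $w\in\Hom(\id_\N,\theta)$) is a harmless variant of the paper's computation $\widehat\cF(E)=\gamma_1(v_1v_1^*)=ww^*$.
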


\begin{proof}
For the first two equalities we refer to \cite[\Sec 4.3]{BiDeGi2021}. The remaining equalities follow by observing that
$$\cF(x\ast y) = \widehat \cF(\phi_x \ast \phi_y) = \gamma_1(V_{\phi_x} V_{\phi_y}) 
= \gamma_1(V_{\phi_x}) \gamma_1(V_{\phi_y}) = \cF(x) \cF(y)$$
and
$$\cF(x^\sharp) = \widehat \cF(\phi_x^\sharp) = \gamma_1(V_{\phi_x}^*) = \gamma_1(V_{\phi_x})^* = \cF(x)^*$$ 
by Proposition \ref{prop:FhatisF}. Moreover, $\cF(1) = \widehat \cF(E) = \gamma_1(v_1v_1^*) = ww^* = e$, where $v_1$ is the isometry in $\M_1$ splitting the Jones projection $v_1 v_1^* = e_\N$ as in the proof of Proposition \ref{prop:FhatisF}. 
\end{proof}

Note that the equalities $\cF(x\ast y) = \cF(x) \cF(y)$ and $\cF(1) = e$ can also be checked directly, without passing to $M(K)$, whereas the involution $x^\sharp$ cannot even be defined without it, to our knowledge.

\begin{rmk}
Note that $1\in \Trig(K)$. Instead, the convolution unit for $L^\infty(K)$ is not always an operator in $\Hom(\gamma,\gamma)$. It is the Dirac measure $\id \in P(K) (= \UCP_\N(\M))$ whose Fourier transform $\widehat \cF(\id) = 1$ sits in $L^\infty(\widehat K)$ but not in $\Trig(\N\subset\M)$, unless the subfactor has finite index.
\end{rmk}

\begin{rmk}\label{rmk:inverseF}
If $x\in \Trig(K)$, the equality $\cF(x^\sharp) = \cF(x)^*$ can be promoted to a definition of involution by means of the inverse subfactor theoretical Fourier transform, $x^\sharp := \cF^{-1}(\cF(x)^*)$, \footnote{For finite index subfactors, it coincides with the $180^{\circ}$-rotation of the diagram for $x^*$.}. Similarly for the convolution $x\ast y := \cF^{-1}(\cF(x) \cF(y))$. However, for infinite index subfactors, $\cF^{-1}$ is only partially defined on $\Hom(\theta,\theta)$ by the formal expression $y \mapsto \cF^{-1}(y) := M^* y \gamma(M)$, where $M := \gamma^{-1}(m)$ and $m$ is as above. 
\end{rmk}

\subsection{Convolution inequalities}\label{sec:convoineq}

In this section, we investigate analytic properties of the convolution and involution operators in $L^\infty(K)$: positivity and norm inequalities.

\begin{lem}\label{lem:convispos}
If $x,y\in L^\infty(K)$ are positive, then $x \ast y$ is positive.
\end{lem}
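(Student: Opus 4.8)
The plan is to exhibit $x\ast y$ directly as an operator of the form $v^*v$. Recall from Definition \ref{def:convoinvoinLinftyK} that $x\ast y = w^* x\,\gamma(y)\, w$, where $w\in\Hom(\id_\N,\theta)$ is the isometry associated with $E$ and we suppress $\iota$ symbols as in the running notation. The only structural input I need is that $\Hom(\gamma,\gamma)=\gamma(\M)'\cap\M$, so that every element of $L^\infty(K)=\Hom(\gamma,\gamma)$ commutes with all of $\gamma(\M)$.

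First I would take positive square roots $a:=x^{1/2}$ and $b:=y^{1/2}$ inside the von Neumann algebra $\Hom(\gamma,\gamma)$; both are positive, hence self-adjoint, and in particular $a\in\gamma(\M)'\cap\M$. Since $\gamma(b)\in\gamma(\M)$, the operators $a$ and $\gamma(b)$ commute. Setting $c:=a\,\gamma(b)$, this commutation together with the fact that $\gamma$ is a $*$-homomorphism gives $c^*=\gamma(b)\,a=a\,\gamma(b)=c$, so $c$ is self-adjoint, and therefore
$$x\,\gamma(y)=a^2\,\gamma(b)^2=a\,\gamma(b)\,a\,\gamma(b)=c^2.$$

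Then I would conclude $x\ast y=w^* x\,\gamma(y)\,w=w^* c^2 w=(cw)^*(cw)\geq 0$, the positivity holding in $\B(\Hil)$ and hence in the von Neumann subalgebra $\Hom(\gamma,\gamma)$ to which $x\ast y$ belongs by Definition \ref{def:convoinvoinLinftyK}. This finishes the argument.

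There is essentially no obstacle: the only point requiring care is the commutation $a\,\gamma(b)=\gamma(b)\,a$, which is immediate from $\Hom(\gamma,\gamma)=\gamma(\M)'\cap\M$ and does \emph{not} rely on commutativity of $\Hom(\gamma,\gamma)$. Alternatively, one could argue more softly: for $x\geq 0$ the map $\phi_x=(aw)^*\gamma(\slot)(aw)$ is completely positive by its Stinespring form, likewise $\phi_y$, so the composition $\phi_x\circ\phi_y=\phi_{x\ast y}$ is completely positive and its density $x\ast y$ is therefore positive. I would nonetheless favour the direct factorisation, since it is self-contained and, as the remark above shows, requires neither locality nor commutativity of $\Hom(\gamma,\gamma)$.
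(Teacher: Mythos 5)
Your proof is correct and matches the paper's: the paper simply states that positivity "follows immediately from the definition $x\ast y = w^* x\,\gamma(y)\,w$", and your square-root factorisation $x\,\gamma(y)=c^2$ with $c=x^{1/2}\gamma(y^{1/2})$ self-adjoint is exactly the computation that makes this immediacy explicit. Your alternative via complete positivity of $\phi_x\circ\phi_y$ is also the paper's stated alternative (the paper additionally records the converse needed there, namely that positivity of $\phi_z$ forces positivity of $z$ by totality of the vectors $\gamma(t)w\xi$).
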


\begin{proof}
It follows immediately from the definition $x \ast y = w^* x \gamma(y) w$. Alternatively, one can observe that if $\phi_z = w^* z \gamma(\slot) w$, $z\in L^\infty(K)$, is (completely) positive on $\M$, then $z$ is positive. Indeed, if $w^* z \gamma(t^*t) w$ is positive for every $t\in\M$, then $(\gamma(t) w \xi, z \gamma(t) w \xi) \geq 0$ for every $\xi \in \Hil$, and vectors of the form $\gamma(t) w \xi$ are total in $\Hil$ by minimality of the Connes--Stinespring representation of $E$. Thus $\phi_{x \ast y} = \phi_{x} \ast \phi_{y}$ entails positivity of $x \ast y$.
\end{proof}

\begin{lem}\label{lem:involistracepreserving}
It holds $w^* x^\sharp w = w^* x^* w = \overline{w^* x w}$ for every $x\in L^\infty(K)$.
\end{lem}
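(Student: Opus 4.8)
The plan is to reduce everything to the observation, recorded in Lemma \ref{lem:Estate}, that the functional $x \mapsto w^* x w$ agrees with $E$ on $\Hom(\gamma,\gamma)$ and hence takes \emph{scalar} values there (it is a state into $\CC 1$). Since $x$, $x^*$ and $x^\sharp$ all lie in $L^\infty(K) = \Hom(\gamma,\gamma)$, each of the three quantities $w^* x w$, $w^* x^* w$, $w^* x^\sharp w$ is a complex multiple of the identity. For the rightmost equality I would then simply note that the adjoint of a scalar operator conjugates the scalar, so $w^* x^* w = (w^* x w)^* = \overline{w^* x w}$.

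For the remaining equality $w^* x^\sharp w = \overline{w^* x w}$ I would evaluate the defining relation \eqref{eq:Omegaadj} of the $\Omega$-adjoint at $x = y = 1$, now understood via the antilinear extension of $\phi \mapsto \phi^\sharp$ to $\Span_\CC(\UCP_\N(\M))$, which still satisfies \eqref{eq:Omegaadj} (both sides are built to pick up the same scalar under the antilinear extension). By Definition \ref{def:convoinvoinLinftyK} one has $\phi_{x^\sharp} = \phi_x^\sharp$, and since $\gamma(1) = 1$ one has $\phi_x(1) = w^* x w$ and $\phi_{x^\sharp}(1) = w^* x^\sharp w$. As these are scalars and $\Omega$ is a unit vector, $(\Omega, \phi_x(1)\Omega) = (\phi_x^\sharp(1)\Omega, \Omega)$ becomes $w^* x w = \overline{w^* x^\sharp w}$, that is, $w^* x^\sharp w = \overline{w^* x w}$. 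Chaining this with the previous identity gives the full statement $w^* x^\sharp w = w^* x^* w = \overline{w^* x w}$.

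I expect no serious obstacle here beyond bookkeeping; the argument is essentially a one-line evaluation at the unit. The only points needing care are that the defining relation of the $\Omega$-adjoint persists on the complex span $\Span_\CC(\UCP_\N(\M))$ (where the involution extends antilinearly), and that $x^\sharp$ is genuinely an element of $L^\infty(K)$ in the first place, which is precisely what Definition \ref{def:convoinvoinLinftyK} presupposes.
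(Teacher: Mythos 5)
Your argument is correct and is essentially the paper's own proof: both evaluate the defining relation of the $\Omega$-adjoint at the unit, using $\phi_{x^\sharp}=\phi_x^\sharp$ and $\phi_x(1)=w^*xw$ to get $w^*x^\sharp w=\overline{\phi_x(1)}=\overline{w^*xw}$, with the middle equality $w^*x^*w=\overline{w^*xw}$ being the scalarity observation from Lemma \ref{lem:Estate}. Your extra care about the antilinear extension of $\sharp$ to $\Span_\CC(\UCP_\N(\M))$ is a point the paper leaves implicit, but it is the same route.
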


\begin{proof}
Observe that $w^* x^\sharp w = w^* x^\sharp \gamma(1) w = \phi_{x^\sharp}(1) = \phi_{x}^\sharp(1) = \overline{\phi_{x}(1)}$ where the last equality follows from the definition of $\Omega$-adjoint $(\Omega, \phi_{x}^\sharp(1) \Omega) = (\phi_{x}(1) \Omega, \Omega)$.
\end{proof}

\begin{lem}\label{lem:involisstarisom}
The involution $x \mapsto x^\sharp$ in $L^\infty(K)$ is an antilinear *-isomorphism.
\end{lem}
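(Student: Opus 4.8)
The plan is to transport the whole statement to the commutative function algebra $L^\infty(K,\mu_E)$, where the involution $\sharp$ becomes the familiar hypergroup involution $f\mapsto\overline{f\circ s}$, with $s$ the involutive homeomorphism $k\mapsto k^\sharp$, and then read off the four defining properties of an antilinear $*$-automorphism essentially for free. First I would recall from Proposition \ref{prop:gammagammaisLinfty} that $f\mapsto x_f$ is a normal $*$-isomorphism from $L^\infty(K,\mu_E)$ onto $\Hom(\gamma,\gamma)=L^\infty(K)$, so that the \emph{ordinary} product and adjoint on $L^\infty(K)$ correspond to pointwise multiplication and complex conjugation of functions. Under the embedding $x\mapsto\phi_x$, which by Remark \ref{rmk:embeddingiflocal} corresponds to $f\mapsto f\dd\mu_E$, and under the duality $\Span_\CC(\UCP_\N(\M))\cong M(K,\B(K))$ of Proposition \ref{prop:SpanUCP} and Theorem \ref{thm:dualityishomeo}, the $\Omega$-adjoint $\phi\mapsto\phi^\sharp$ of Definition \ref{def:Omegaadj} is exactly the antilinear hypergroup involution of measures used to equip $K$ with its hypergroup structure in Definition \ref{def:subfcpthyp} (see \cite{BiDeGi2021}).

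Next I would pin down the induced operation on densities. Since the Haar measure $\mu_E=\mu_K$ of the compact hypergroup $K$ is invariant under $s\colon k\mapsto k^\sharp$ (a standard fact for compact hypergroups, cf. \cite{BlHe1995}; equivalently $\mu_K^\sharp$ is again a Haar measure, hence $\mu_K^\sharp=\mu_K$ by the uniqueness in Definition \ref{def:abstractcpthyp}$(iii)$), the antilinear involution on $M(K,\B(K))$ restricts on measures dominated by $\mu_E$ to $f\dd\mu_E\mapsto\overline{f\circ s}\dd\mu_E$. Domination is preserved under $\sharp$ because $E^\sharp=E$: indeed $V_E=e_\N$ is a selfadjoint projection, so $V_{E^\sharp}=V_E^*=e_\N=V_E$, whence $dE-\phi_x$ completely positive forces $dE-\phi_x^\sharp$ completely positive. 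Consequently, under the isomorphism of the first step, $x_f\mapsto (x_f)^\sharp$ is carried to $\alpha\colon f\mapsto\overline{f\circ s}$ on $L^\infty(K,\mu_E)$.

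It then remains to verify that $\alpha$ is an antilinear $*$-automorphism, which is routine: antilinearity is the complex conjugation; $\alpha(fg)=\overline{(fg)\circ s}=\overline{f\circ s}\,\overline{g\circ s}=\alpha(f)\alpha(g)$ gives multiplicativity; $\alpha(\overline f)=\overline{\overline f\circ s}=f\circ s=\overline{\alpha(f)}$ gives compatibility with the adjoint (which in $L^\infty(K,\mu_E)$ is $f\mapsto\overline f$); and $\alpha^2(f)=\overline{\overline{f\circ s}\circ s}=f\circ(s\circ s)=f$ since $s$ is involutive, so $\alpha$ is bijective with $\alpha^{-1}=\alpha$. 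Structurally, $\alpha$ is the composite of complex conjugation (an antilinear $*$-automorphism of the commutative algebra) with the linear $*$-automorphism $f\mapsto f\circ s$ induced by the measure-preserving involutive homeomorphism $s$, from which the conclusion is immediate.

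I expect the only delicate point, hence the main obstacle, to be the identification in the second step: that the operator-theoretic $\Omega$-adjoint $x\mapsto x^\sharp$ agrees on $\Hom(\gamma,\gamma)$ with the pullback-and-conjugate $f\mapsto\overline{f\circ s}$, which rests on the $s$-invariance of $\mu_E$ and on the compatibility of the $\Omega$-adjoint with the abstract hypergroup involution from \cite{BiDeGi2021}. A more computational alternative that avoids the function picture would first establish the identities $(xy)^\sharp=x^\sharp y^\sharp$ and $(x^*)^\sharp=(x^\sharp)^*$ on the weakly dense subalgebra $\Trig(K)$ using the Fourier relations $\cF(x^\sharp)=\cF(x)^*$, $\cF(x^*)=\cF(x)^\bullet$ and $\cF(xy)=\cF(y)\ast\cF(x)$ of Proposition \ref{prop:Fisstarhomo} (reducing them to the commutation of $*$ and $\bullet$ on $\Trig(\N\subset\M)$) and then extend by normality of $\sharp$; but verifying that commutation and the normality seems less transparent than the function-algebra argument above.
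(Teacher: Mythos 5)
Your argument is correct in outline, but it is a genuinely different route from the paper's. The paper proves the lemma entirely on the Fourier side: it first verifies $(xy)^\sharp = x^\sharp y^\sharp$ and $(x^*)^\sharp = (x^\sharp)^*$ on the weakly dense subalgebra $\Trig(K)$ via the relations $\cF(x^\sharp)=\cF(x)^*$, $\cF(x^*)=\cF(x)^\bullet$, $\cF(xy)=\cF(y)\ast\cF(x)$ of Proposition \ref{prop:Fisstarhomo} together with injectivity of $\cF$, and then extends to all of $L^\infty(K)$ by a Kaplansky-density argument: $x_n\to x$ weakly with $\|x_n\|\leq\|x\|$ forces $\phi_{x_n}^\sharp\to\phi_x^\sharp$ pointwise weakly (via \cite[Rmk 4.27]{BiDeGi2021} and the bound $\|V_{\phi_{x_n}}\|\leq\|x_n\|$), and the isometry $\|x_n^\sharp\|=\|x_n\|$ on $\Trig(K)$ (proved through the $L^2$-picture and Lemma \ref{lem:involistracepreserving}) keeps the approximants under control. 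This is exactly the ``computational alternative'' you sketch and set aside at the end; the commutation of $*$ and $\bullet$ that you worry about is in fact immediate from the explicit formula for $x^\bullet$, and the extension step is not by abstract ``normality of $\sharp$'' but by the concrete convergence argument above. Your primary route --- transporting everything to $L^\infty(K,\mu_E)$ and identifying $x\mapsto x^\sharp$ with $f\mapsto\overline{f\circ s}$ --- is conceptually cleaner and would simultaneously yield Lemmas \ref{lem:involistracepreserving} and \ref{lem:sharpisLpisomet} for free, but it front-loads the real difficulty into the identification you yourself flag as delicate: the duality of Theorem \ref{thm:dualityishomeo} identifies the $\Omega$-adjoint with the hypergroup involution on the \emph{extreme} points $K$, and propagating this to all of $\Span_\CC(\UCP_\N(\M))$ (two affine maps agreeing on extreme points agree everywhere only once you know both are weak-$*$ continuous) requires essentially the same continuity input from \cite[Rmk 4.27]{BiDeGi2021} that the paper's density argument uses. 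So your approach buys transparency at the cost of leaning harder on the duality machinery of \cite{BiDeGi2021}, while the paper's stays self-contained in the operator picture and produces the norm estimates it reuses later (e.g.\ in Proposition \ref{prop:Young}) as a by-product.
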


\begin{proof}
If $x,y\in\Trig(K)$, the equalities $(xy)^\sharp = x^\sharp y^\sharp$ and $(x^*)^\sharp = (x^\sharp)^*$ follow from Proposition \ref{prop:Fisstarhomo}, by observing that $(\cF(y)\ast\cF(x))^* = \cF(y)^*\ast\cF(x)^*$ and $(\cF(x)^\bullet)^* = (\cF(x)^*)^\bullet$, and by injectivity of the Fourier transform. $1^\sharp = 1$ holds because $\id^\sharp = \id$. If $x\in L^\infty(K)$, 
let $x_n\in \Trig(K)$ such that $x_n \to x$ in the weak operator topology and $\|x_n\| \leq \|x\|$ by Kaplansky's density theorem. Then $\phi_{x_n} \to \phi_{x}$ in the pointwise weak operator topology, and by the same argument as in \cite[\Rmk 4.27]{BiDeGi2021}, using $\|V_{\phi_{x_n}}\| \leq \|x_n\|_1 \leq \|x_n\|$, it follows that $\phi_{x_n}^\sharp \to \phi_{x}^\sharp$ in the pointwise weak operator topology. Thus $(\gamma(t) w \xi, x_n^\sharp \gamma(s) w \eta) \to (\gamma(t) w \xi, x^\sharp \gamma(s) w \eta)$ for every $t,s\in\M$, $\xi,\eta\in\Hil$. As observed above, vectors of the form $\gamma(t) w \xi$ are total in $\Hil$. Moreover, $\|x_n^\sharp\| = \|x_n\|$ for $x_n\in\Trig(K)$ because 
\begin{align}
\|x_n^\sharp\| &= \|x_n^\sharp\|_{\B(L^2(K))} \\
&= \sup_{\xi \in L^2(K)} \|x_n^\sharp \xi\|_2 \|\xi\|_2^{-1} \\
&= \sup_{\xi \in \Trig(K)} \|x_n^\sharp \xi^\sharp\|_2 \|\xi^\sharp\|_2^{-1} \\
&= \sup_{\xi \in \Trig(K)} \| (x_n \xi)^\sharp\|_2 \|\xi^\sharp\|_2^{-1} \\
&= \sup_{\xi \in \Trig(K)} \| x_n \xi\|_2 \|\xi\|_2^{-1} = \|x_n\|
\end{align}
by using that $\Trig(K) = \Trig(K)^\sharp$ is dense in $L^2(K)$ and Lemma \ref{lem:involistracepreserving}. We conclude that $x_n^\sharp \to x^\sharp$ in the weak operator topology and $(xy)^\sharp = x^\sharp y^\sharp$ and $(x^*)^\sharp = (x^\sharp)^*$ hold for every $x,y\in L^\infty(K)$. 
\end{proof}

By Lemma \ref{lem:involisstarisom}, we have that 
\begin{align}\label{eq:sharpabsval}
|x^\sharp |^2 = (x^{\sharp})^* x^\sharp = (x^*x)^\sharp = (|x|^2)^\sharp = |x|^\sharp |x|^\sharp
\end{align}
and thus $|x^\sharp| = |x|^\sharp$, as $|x|^\sharp$ is positive by the proof of Lemma \ref{lem:convispos}. 

\begin{lem}\label{lem:sharpisLpisomet}
It holds $\|x^\sharp\|_p = \|x\|_p$ for every $x\in L^\infty(K)$, $1\leq p \leq \infty$.
\end{lem}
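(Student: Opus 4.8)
The plan is to reduce everything to the relation $|x^\sharp| = |x|^\sharp$ already recorded in \eqref{eq:sharpabsval}, combined with the trace/state-preservation property in Lemma \ref{lem:involistracepreserving}. First I would observe that, since $x \mapsto x^\sharp$ is an antilinear *-isomorphism of the \emph{commutative} von Neumann algebra $L^\infty(K)$ by Lemma \ref{lem:involisstarisom}, it commutes with continuous functional calculus applied to positive elements. Indeed $^\sharp$ is multiplicative, additive, and fixes real scalar multiples of the identity (because $1^\sharp = 1$ and real scalars are conjugation-invariant), so $p(|x|)^\sharp = p(|x|^\sharp)$ for every polynomial $p$ with real coefficients. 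As $^\sharp$ preserves spectra and $|x|^\sharp \geq 0$, approximating $t \mapsto t^p$ uniformly on the spectrum of $|x|$ by such polynomials gives
\begin{align}
|x^\sharp|^p = (|x|^\sharp)^p = (|x|^p)^\sharp,
\end{align}
for every $1 \leq p < \infty$, where the first equality is \eqref{eq:sharpabsval}.

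Next I would use that $E$ restricted to $L^\infty(K)$ coincides with $x \mapsto w^* x w$ (Lemma \ref{lem:Estate}), so that $\|x\|_p^p = E(|x|^p)$, and that $^\sharp$ fixes $E$ on positive elements. Applying Lemma \ref{lem:involistracepreserving} to the positive element $y := |x|^p$, whose value under $E$ is real and nonnegative, yields
\begin{align}
\|x^\sharp\|_p^p = E(|x^\sharp|^p) = E\big((|x|^p)^\sharp\big) = \overline{E(|x|^p)} = E(|x|^p) = \|x\|_p^p,
\end{align}
which settles the case $1 \leq p < \infty$. For $p = \infty$ the statement is immediate from Lemma \ref{lem:involisstarisom}: an antilinear *-isomorphism of $C^\ast$-algebras is isometric for the operator norm, since $\|x^\sharp\|^2 = \|(x^\sharp)^* x^\sharp\| = \|(x^*x)^\sharp\| = \|x^*x\| = \|x\|^2$, using that $^\sharp$ preserves the spectrum of the positive element $x^*x$.

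The only genuinely delicate point is the first step, namely justifying that the antilinear *-isomorphism $^\sharp$ intertwines the non-integer power functional calculus $t \mapsto t^p$. This is routine but worth spelling out, as it is precisely the step where antilinearity enters: one must invoke that $^\sharp$ fixes \emph{real} (rather than all complex) scalars, together with spectral invariance, to pass from real polynomials to the continuous function $t^p$ on $\spec(|x|)$. Everything else is a direct computation built on the two cited lemmas.
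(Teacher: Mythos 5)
Your proof is correct and follows essentially the same route as the paper's: both reduce to the identity $|x^\sharp|^p = (|x|^p)^\sharp$ by noting that the antilinear *-isomorphism $^\sharp$ commutes with real continuous functional calculus of positive elements, then apply Lemma \ref{lem:involistracepreserving} to the positive element $|x|^p$ (whose image under $E$ is real, so conjugation is harmless), and handle $p=\infty$ via the \Cstar-identity and spectral invariance. Your write-up merely makes explicit the polynomial-approximation justification that the paper leaves implicit.
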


\begin{proof}
The statement for $p=\infty$ follows by Lemma \ref{lem:involisstarisom} and by the spectral properties of the \Cstar-norm.
For $p=2$, we compute $\|x^\sharp\|_2 = (w^* (x^\sharp)^* x^\sharp w)^{1/2} = (w^* (x^* x)^\sharp w)^{1/2} = \|x\|_2$ by Lemma \ref{lem:involistracepreserving}. 
For $p=1$, one can use \eqref{eq:sharpabsval}. For $1\leq p<\infty$, it follows by observing that $(|x^\sharp|)^p = (|x|^\sharp)^p = (|x|^p)^\sharp$.
Indeed, $x^*x$ is positive, thus the involution commutes with the real continuous functional calculus of $x^*x$, in this case with the function $y \mapsto y^{p/2}$, again by Lemma \ref{lem:involisstarisom}.
\end{proof}

\begin{lem}\label{lem:totalvariationonpos}
For positive elements $x\in L^\infty(K)$, 
it holds $\|x\|_1 = \phi_x(1) = \|\phi_x\|$, where $\|\phi_x\|$ is the norm of $\phi_x$ as a bounded linear operator on $\M$.

More generally, if $x\in L^\infty(K)$, it holds $(\|x\|_p)^p = \phi_{|x|^p}(1) = \|\phi_{|x|^p}\|$ for every $1 \leq p < \infty$ and $\|x\|_{\infty} = \inf\{\lambda>0: E-\lambda^{-1}\phi_{|x|} \, \text{is completely positive}\}$.
\end{lem}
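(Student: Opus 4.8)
The plan is to reduce all three identities to two facts already available for the densities $\phi_z = w^\ast z \gamma(\slot) w$, $z \in L^\infty(K)$: that $\phi_z$ is completely positive exactly when $z \geq 0$ (the argument in the proof of Lemma~\ref{lem:convispos}), and that $\phi_z(1) = w^\ast z w = E(z)$ by Lemma~\ref{lem:Estate}. Combined with the definition $\|x\|_p = E(|x|^p)^{1/p}$ and the identity $E = \phi_1$, these settle everything with essentially no computation.

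For the first statement I would take $x \geq 0$, so that $|x| = x$ and $\|x\|_1 = E(x)$, while $\phi_x(1) = w^\ast x w = E(x)$; this gives $\|x\|_1 = \phi_x(1)$ immediately. Since $x \geq 0$ makes $\phi_x$ completely positive, and a positive linear map on a unital $C^\ast$-algebra attains its operator norm at the identity, I get $\|\phi_x\| = \|\phi_x(1)\| = E(x)$, the last step because $\phi_x(1) = E(x)\, 1$ is a nonnegative scalar. This closes the chain $\|x\|_1 = \phi_x(1) = \|\phi_x\|$.

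For the general $L^p$ statement, $1 \leq p < \infty$, I would simply apply the first statement to the positive element $|x|^p \in L^\infty(K)$, obtaining $(\|x\|_p)^p = E(|x|^p) = \||x|^p\|_1 = \phi_{|x|^p}(1) = \|\phi_{|x|^p}\|$. For the $L^\infty$ statement the point is that linearity of $z \mapsto \phi_z$ gives $E - \lambda^{-1}\phi_{|x|} = \phi_{1 - \lambda^{-1}|x|}$, so by the completely-positive-iff-positive characterization this map is completely positive precisely when $1 - \lambda^{-1}|x| \geq 0$, i.e. $|x| \leq \lambda\, 1$, i.e. $\lambda \geq \||x|\| = \|x\| = \|x\|_\infty$; taking the infimum over admissible $\lambda > 0$ returns $\|x\|_\infty$, and the degenerate case $x = 0$ is trivially consistent.

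No serious obstacle is anticipated: the single external input is the standard fact that a positive linear map between unital $C^\ast$-algebras attains its norm at $1$, which I would cite rather than prove. The only mild care needed is to observe that $\phi_x(1)$, being a scalar multiple of the identity, has operator norm equal to that scalar, and that $\||x|\| = \|x\|$.
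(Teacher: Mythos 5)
Your proposal is correct and follows essentially the same route as the paper: the paper likewise derives the $1\leq p<\infty$ cases directly from the definitions together with positivity of $\phi_x$ and $\phi_{|x|^p}$, and handles the $L^\infty$ case by writing $E-\lambda^{-1}\phi_{|x|}=w^*(1-\lambda^{-1}|x|)\gamma(\slot)w$ and invoking the positivity characterization from the proof of Lemma \ref{lem:convispos}. Your write-up merely makes explicit the standard fact that a (completely) positive unital-algebra-valued map attains its norm at the identity, which the paper leaves implicit.
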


\begin{proof}
The statements for $1 \leq p < \infty$ follow immediately from the definitions and from the positivity of $\phi_x$ and $\phi_{|x|^p}$. 
For the last statement, it is enough to observe that $E- \lambda^{-1}\phi_{|x|} = w^*(1-\lambda^{-1}|x|)\gamma(\cdot)w$
is (completely) positive if and only if $1-\lambda^{-1}|x|$ is positive, where the only if part follows by the proof of Lemma \ref{lem:convispos}.
This is achieved for every $\lambda > \||x|\|_\infty = \|x\|_\infty$. 
\end{proof}

\begin{prop}[Young inequality]\label{prop:Young}
If $x,y \in L^\infty(K)$, then
$$\|x\ast y\|_r \leq \|x\|_p \|x\|_q$$
for $1\leq p,q,r \leq \infty$ such that $1/p + 1/q = 1/r + 1$.
\end{prop}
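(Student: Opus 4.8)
The plan is to establish the three endpoint (\emph{corner}) cases of the inequality and to deduce the general statement by bilinear complex interpolation on the $L^p$-scale attached to the faithful normal state $E=w^\ast(\slot)w$ on the commutative algebra $L^\infty(K)=\Hom(\gamma,\gamma)$ (Lemma \ref{lem:Estate}). The only structural input I need is the identity $\phi_{x\ast y}=\phi_x\circ\phi_y$ built into Definition \ref{def:convoinvoinLinftyK}, together with $\phi_1=E$. As a preliminary, for positive $x$ one has $x\ast1=1\ast x=w^\ast x w=E(x)=\|x\|_1\,1$ by Lemma \ref{lem:totalvariationonpos}, and hence $\phi_x\circ E=\phi_{x\ast1}=\|x\|_1\,E$ and $E\circ\phi_x=\phi_{1\ast x}=\|x\|_1\,E$.

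First I would prove the corners for positive $x,y$, noting that then $x\ast y\ge0$ by Lemma \ref{lem:convispos}. For the vertex $(1/p,1/q,1/r)=(1,1,1)$ I compute $\|x\ast y\|_1=\phi_{x\ast y}(1)=\phi_x(\phi_y(1))=\|y\|_1\,\phi_x(1)=\|x\|_1\|y\|_1$, again by Lemma \ref{lem:totalvariationonpos}. For the vertex $(1,0,0)$ I start from the domination $\phi_y\le\|y\|_\infty E$ supplied by Lemma \ref{lem:totalvariationonpos} and compose on the left with the completely positive map $\phi_x$, obtaining $\phi_{x\ast y}=\phi_x\circ\phi_y\le\|y\|_\infty\,\phi_x\circ E=\|x\|_1\|y\|_\infty\,E$, so that $\|x\ast y\|_\infty\le\|x\|_1\|y\|_\infty$ by the $\|\slot\|_\infty$-characterisation in Lemma \ref{lem:totalvariationonpos}. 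The vertex $(0,1,0)$ is handled symmetrically: from $\phi_x\le\|x\|_\infty E$ and composition on the right with $\phi_y$ one gets $\phi_{x\ast y}\le\|x\|_\infty\,E\circ\phi_y=\|x\|_\infty\|y\|_1\,E$, whence $\|x\ast y\|_\infty\le\|x\|_\infty\|y\|_1$. Since the convolution on $L^\infty(K)$ is not commutative, these last two corners are genuinely independent.

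Next I would remove the positivity assumption. Under the identification $L^\infty(K)\cong L^\infty(K,\mu_E)$ of Proposition \ref{prop:gammagammaisLinfty}, with $\ast$ the hypergroup convolution of Section \ref{sec:classicalF}, the pointwise bound $|x\ast y|\le|x|\ast|y|$ holds because the convolving kernels $\delta_s\ast\delta_t$ are probability measures. Combined with $\|z\|_p=\||z|\|_p$ and the monotonicity of $\|\slot\|_p$, this promotes all three corner estimates to arbitrary $x,y\in L^\infty(K)$, each with constant $1$.

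Finally I interpolate. The admissible exponents are exactly the triangle $1\le 1/p+1/q\le2$ in the unit square, on which $1/r=1/p+1/q-1$ depends affinely; its three vertices are the corners $(1,1,1)$, $(1,0,0)$ and $(0,1,0)$ just treated. Because $L^\infty(K)$ is commutative, the spaces $L^p(K)$ are ordinary $L^p(\mu_E)$-spaces, so two successive applications of the bilinear Riesz--Thorin theorem (equivalently, its multilinear form) with all three endpoint constants equal to $1$ yield $\|x\ast y\|_r\le\|x\|_p\|y\|_q$. The only genuinely non-formal point is the domination $|x\ast y|\le|x|\ast|y|$, which is why I route the positivity reduction through the function model of Section \ref{sec:classicalF}; alternatively, the entire inequality follows from the classical Young inequality for the compact hypergroup $(K,\mu_E)$, cf.\ the references cited in the remark after Proposition \ref{prop:Hausdorff-Young}. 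Note the harmless typo in the statement: the right-hand side should read $\|x\|_p\|y\|_q$.
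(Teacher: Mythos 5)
Your proof is correct, and while it follows the same overall skeleton as the paper's (three endpoint estimates plus two rounds of bilinear interpolation), it reaches the endpoints by a genuinely different route in two places. For the $L^\infty$-corners the paper goes through the Fourier transform: it writes $\|x\ast y\|_\infty$ as a supremum of $|\Tr(\cF(z)^*\cF(x\ast y))|$ over $\|z\|_1\leq 1$ and uses Parseval (Proposition \ref{prop:Parseval}), the multiplicativity $\cF(x\ast y)=\cF(x)\cF(y)$ (Proposition \ref{prop:Fisstarhomo}), the involution $x^\sharp$ with Lemma \ref{lem:sharpisLpisomet}, and H\"older; you instead stay entirely on the ucp-map side, composing the complete-positivity domination $\phi_y\leq\|y\|_\infty E$ from Lemma \ref{lem:totalvariationonpos} with $\phi_x$ and using $\phi_x\circ E=\phi_{x\ast 1}=\|x\|_1 E$. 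This is a correct and more elementary use of the order-theoretic characterisation of $\|\cdot\|_\infty$, and it makes the proof independent of the Fourier-transform machinery (which the paper is partly trying to exercise in this section). For the reduction to positive elements the paper approximates the phase of the polar decomposition by simple functions and invokes Minkowski, whereas you use the pointwise domination $|x\ast y|\leq|x|\ast|y|$, justified by the fact that the kernels $\delta_s\ast\delta_t$ are probability measures; this is legitimate given the identification of $\ast$ with the hypergroup convolution on $(K,\mu_E)$ via Proposition \ref{prop:gammagammaisLinfty} and Section \ref{sec:classicalF}, and it is the cleaner statement of what the paper's approximation argument achieves. The concluding interpolation is the same in substance: since $L^\infty(K)$ is commutative and the reference functional $x\mapsto w^*xw$ is a state (Lemma \ref{lem:Estate}), your classical bilinear Riesz--Thorin and the paper's Kosaki complex interpolation coincide. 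You are also right that the statement contains a typo and should read $\|x\|_p\|y\|_q$ on the right-hand side.
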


\begin{proof}
Let $r=1$, $p=1$, $q=1$. Assume first that $x$ and $y$ are positive in $L^\infty(K)$, then by Lemma \ref{lem:convispos} and Lemma \ref{lem:totalvariationonpos} we have
\begin{align}
\|x\ast y\|_1 &= \| \phi_{x\ast y}\| \\
&= \| \phi_{x} \ast \phi_{y}\| \\
&\leq \| \phi_{x}\| \|\phi_{y}\| = \|x\|_1 \|y\|_1. 
\end{align}
For general $x, y \in L^\infty(K)$, as in the proof of Proposition \ref{prop:RiemannLebesgue}, let $x_n \to x$ and $y_k \to y$ in the $L^\infty$-norm topology such that $x_n = \sum_m \nu_n^m x_n^m$, $|x| = |x_n| = \sum_m x_n^m$ and $y_k = \sum_h \mu_k^h y_k^h$, $|y| = |y_k| = \sum_h y_k^h$, the sums over $m$ and $h$ are finite, $\nu_n^m$, $\mu_k^h$ are complex phases and $x_n^m$, $y_k^h$ are positive in $L^\infty(K)$. Then
$$\|x_n\ast y_k\|_1 = \| \sum_{m,h} \nu_n^m \mu_k^h (x_n^m \ast y_k^h) \|_1 \leq \sum_{m} \| x_n^m\|_1 \sum_{h}\|y_k^h \|_1 = \|x\|_1\|y\|_1$$
by the Minkowski inequality and the previous step. Moreover, $x_n \ast y_n = w^* x_n \gamma(y_n) w \to x \ast y$ in the $L^\infty$-norm topology, hence in the $L^1$-norm topology, thus $\|x \ast y\|_1 \leq \|x\|_1 \|y\|_1$.

Let $r=\infty$, $p=1$, $q=\infty$. By \eg \cite[\eq (25)]{Ne1974}, Proposition \ref{prop:Parseval}, Proposition \ref{prop:Fisstarhomo} and Lemma \ref{lem:sharpisLpisomet}, we get 
\begin{align}
\|x\ast y\|_\infty &= \sup_{z\in L^\infty(K), \|z\|_1 \leq 1} | w^* (z^* (x \ast y)) w | \\
&= \sup_{z\in L^\infty(K), \|z\|_1 \leq 1} | \Tr (\cF(z)^* \cF(x \ast y))| \\
&= \sup_{z\in L^\infty(K), \|z\|_1 \leq 1} | \Tr ((\cF(x)^*\cF(z))^* \cF(y))| \\
&= \sup_{z\in L^\infty(K), \|z\|_1 \leq 1} | \Tr (\cF(x^\sharp \ast z)^* \cF(y))| \\
&= \sup_{z\in L^\infty(K), \|z\|_1 \leq 1} | w^* ((x^\sharp \ast z)^* y) w | \\
&\leq \sup_{z\in L^\infty(K), \|z\|_1 \leq 1} \|x^\sharp \ast z\|_1 \|y\|_\infty \leq \|x\|_1 \|y\|_\infty
\end{align}
where in the last line we used the H\"older inequality and $\|x^\sharp \ast z\|_1 \leq \|x^\sharp\|_1 \|z\|_1$. By a symmetric argument, it follows also $\|x\ast y\|_\infty \leq \|x\|_\infty \|y\|_1$.

By complex interpolation \cite[\Thm 1.2, \Def 3.1]{Ko1984} among the two previous cases, we get $\|x \ast y\|_p \leq \|x\|_1 \|y\|_p$ and $\|x \ast y\|_p \leq \|x\|_p \|y\|_1$ for $1\leq p \leq \infty$.

Let $r=\infty$ and $p, q$ such that $1/p + 1/q =1$. As before, we get
\begin{align}
\|x\ast y\|_\infty &= \sup_{z\in L^\infty(K), \|z\|_1 \leq 1} | w^* ((x^\sharp \ast z)^* y) w | \\
&\leq \sup_{z\in L^\infty(K), \|z\|_1 \leq 1} \|x^\sharp \ast z\|_p \|y\|_q \leq \|x\|_p \|y\|_q
\end{align}
where we used again the H\"older inequality and the previously derived $\|x^\sharp \ast z\|_p \leq \|x^\sharp\|_p \|z\|_1$. Thus we have shown $\|x\ast y\|_\infty \leq \|x\|_p \|y\|_q$ for $p, q$ such that $1/p + 1/q =1$.

Again by complex interpolation \cite[\Thm 1.2, \Def 3.1]{Ko1984} among the cases $r=\infty$, $1/p + 1/q =1$, and $r=p$, $q=1$, or $r=q$, $p=1$, we get the general statement.
\end{proof}

\begin{rmk}
For $x$ and $y$ positive in $L^\infty(K)$, it also holds 
\begin{align}
\|x\ast y\|_1 &= (\phi_x \ast \phi_y) (1) \\
&= \phi_x(1) \phi_y(1) = \|x\|_1 \|y\|_1.
\end{align}
\end{rmk}

\begin{cor}
The space $L^1(K)$ is a complex Banach algebra with involution.
\end{cor}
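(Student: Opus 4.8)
The plan is to equip the Banach space $L^1(K)$, which by definition is the completion of $L^\infty(K) = \Hom(\gamma,\gamma)$ in the norm $\|\cdot\|_1$, with the convolution $\ast$ and the involution $\sharp$ of Definition \ref{def:convoinvoinLinftyK}, and to check that all the required structure extends from the dense subalgebra $L^\infty(K)$ by continuity. Since $L^\infty(K)$ sits densely in $L^1(K)$, it suffices to verify the defining identities of a Banach $\ast$-algebra on $L^\infty(K)$ together with the continuity estimates that permit their extension; no unit is claimed, consistently with the Remark preceding this corollary.

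First I would invoke Young's inequality (Proposition \ref{prop:Young}) with $p=q=r=1$, which gives $\|x\ast y\|_1 \leq \|x\|_1\|y\|_1$ for all $x,y\in L^\infty(K)$. Thus $\ast$ is a bounded bilinear map on the dense subspace and extends uniquely to a jointly continuous bilinear map $\ast\colon L^1(K)\times L^1(K)\to L^1(K)$ obeying the same submultiplicative bound; this already makes $L^1(K)$ a normed algebra, and a Banach algebra by completeness, once associativity is in hand.

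For associativity and the involution identities I would work on $L^\infty(K)$ and transport everything through the Fourier transform, using that $\cF$ is injective. By Proposition \ref{prop:Fisstarhomo}, $\cF(x\ast y)=\cF(x)\cF(y)$ and $\cF(x^\sharp)=\cF(x)^*$ for all $x,y\in L^\infty(K)$. Since multiplication and adjoint in the von Neumann algebra $L^\infty(\widehat K)=\Hom(\theta,\theta)$ are associative, involutive and anti-multiplicative, injectivity of $\cF$ yields $(x\ast y)\ast z = x\ast(y\ast z)$, $(x^\sharp)^\sharp=x$, and $(x\ast y)^\sharp=y^\sharp\ast x^\sharp$ on $L^\infty(K)$; the last identity follows from $\cF((x\ast y)^\sharp)=(\cF(x)\cF(y))^*=\cF(y)^*\cF(x)^*=\cF(y^\sharp\ast x^\sharp)$. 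Antilinearity of $\sharp$ is recorded in Lemma \ref{lem:involisstarisom}, and the isometry $\|x^\sharp\|_1=\|x\|_1$ in Lemma \ref{lem:sharpisLpisomet}, so $\sharp$ extends to an antilinear isometry of $L^1(K)$.

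Finally I would pass all identities to the completion: each side of the associativity relation is a jointly continuous function of $(x,y,z)$, each side of $(x\ast y)^\sharp=y^\sharp\ast x^\sharp$ is jointly continuous in $(x,y)$, and $x\mapsto(x^\sharp)^\sharp$ is continuous, so agreement on the dense set $L^\infty(K)$ forces agreement throughout $L^1(K)$. The only potential obstacle is the bookkeeping of these density-and-continuity extensions, but it is mild: all the genuine analytic content has already been isolated in Young's inequality and in the isometry of the involution, so the argument is purely a matter of assembling results proved above.
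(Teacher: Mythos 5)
Your proposal is correct and is essentially the argument the paper intends: the corollary is stated without an explicit proof precisely because it follows by assembling Young's inequality with $p=q=r=1$ (for submultiplicativity and the extension of $\ast$ to the completion), Proposition \ref{prop:Fisstarhomo} together with injectivity of $\cF$ (for associativity and the $\sharp$-identities), and Lemma \ref{lem:sharpisLpisomet} (for the isometry of the involution on $L^1(K)$), exactly as you do. The density-and-continuity bookkeeping you describe is the standard and intended completion step.
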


\begin{rmk}
This fact is known for locally compact KPC hypergroups \cite[\Sec 5]{KaPoCh2010}, thus for subfactor theoretical compact hypergroups \cite[\Sec 3]{BiDeGi2021}, which are contained in this class.  
\end{rmk}

\subsection{Inversion formula and uncertainty principles}\label{sec:inversionuncertainty}

In this section, we prove the inversion formula for the subfactor theoretical Fourier transform and an uncertainty principle relating the size of the support of $x\in \Hom(\gamma,\gamma)$ and of $\cF(x)\in \Hom(\theta,\theta)$. Recall the notation $L^\infty(K) = \Hom(\gamma,\gamma)$ and $L^\infty(\widehat K) = \Hom(\theta,\theta)$. Recall also that $\Hom(\theta,\theta) \cong \bigoplus_{[\rho]} M_{n_\rho}(\CC)$, where the sum runs over inequivalent irreducible $\rho \prec \theta$. For $x\in L^\infty(K)$, let
\begin{align}
(\cF(x))(\rho) &:= \sum_{r,s = 1, \ldots, n_\rho} w_{\rho,r} w_{\rho,r}^* \cF(x) w_{\rho,s} w_{\rho,s}^* \\ 
&= \sum_{r,s = 1, \ldots, n_\rho} (w_{\rho,r}^* \cF(x) w_{\rho,s}) w_{\rho,r} w_{\rho,s}^*
\end{align}
and $((\cF(x))(\rho))_{r,s} := w_{\rho,r}^* \cF(x) w_{\rho,s} \in \Hom(\rho,\rho) = \CC 1$. Thus $(\cF(x))(\rho) \in M_{n_\rho}(\CC)$ for every $\rho \prec \theta$. The support of $\cF(x)$, not to be confused with the support projection of $\cF(x)$ and denoted below by $\supp \cF(x)$, can be considered to be the set of inequivalent irreducible $\rho \prec \theta$ such that $(\cF(x))(\rho) \neq 0$. Let also $\chi_{\rho,r,s} := \psi_{\rho,r}^* \bar\psi_{\rho,s} \in L^\infty(K)$ be the trigonometric polynomials considered in the proof of Proposition \ref{prop:Parseval}. By \cite[\Prop 4.15]{BiDeGi2021} and Proposition \ref{prop:Parseval},
\begin{align}
(\chi_{\rho,r,s}| x)_{L^2(K)} &= (w_{\rho,r} w_{\rho,s}^*| \cF(x))_{L^2(\widehat K)} \\
&= \Tr(w_{\rho,s} w_{\rho,r}^* \cF(x)) \\
&= w_{\rho,r}^* \cF(x) w_{\rho,s} \Tr(w_{\rho,s} w_{\rho,s}^*) \\
&= ((\cF(x))(\rho))_{r,s} d(\rho)
\end{align}
Moreover, $\{d(\rho)^{-1/2} \chi_{\rho,r,s}\}_{\rho,r,s}$ is an orthonormal basis of $L^2(K)$. Thus we get the following:

\begin{prop}[Inversion formula]\label{prop:inversion}
If $x \in L^2(K)$, then
$$x = \sum_{\rho,r,s} ((\cF(x))(\rho))_{r,s} \chi_{\rho,r,s}$$
where the sum converges in the $L^2$-norm topology.
\end{prop}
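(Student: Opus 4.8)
The plan is to read the inversion formula off as nothing more than the abstract orthonormal-expansion of $x$ with respect to the basis $\{d(\rho)^{-1/2}\chi_{\rho,r,s}\}_{\rho,r,s}$ of $L^2(K)$, substituting the inner-product identity computed immediately before the statement. Since both of these ingredients are already in hand, the argument is genuinely short and the role of this proof is essentially bookkeeping of the normalization factors.

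First I would invoke the generic Hilbert-space expansion: for any vector $x$ in a Hilbert space and any orthonormal basis $\{e_i\}$, one has $x = \sum_i (e_i|x)\, e_i$ with convergence of the series in norm (unconditional convergence of the generalized Fourier series). The inner product on $L^2(K)$ from Proposition \ref{prop:Parseval} is $(x|y)_{L^2(K)} = w^* x^* y w = E(x^*y)$, which is conjugate-linear in the first slot, so this is exactly the convention in which the formula $x=\sum (e_i|x)e_i$ holds. Applying it to the orthonormal basis $e_{\rho,r,s} := d(\rho)^{-1/2}\chi_{\rho,r,s}$ gives, for every $x\in L^2(K)$,
$$x = \sum_{\rho,r,s} (d(\rho)^{-1/2}\chi_{\rho,r,s}\,|\,x)_{L^2(K)}\; d(\rho)^{-1/2}\chi_{\rho,r,s}$$
with convergence in the $L^2$-norm topology.

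Next I would substitute the value $(\chi_{\rho,r,s}\,|\,x)_{L^2(K)} = ((\cF(x))(\rho))_{r,s}\,d(\rho)$ established just above. Because $d(\rho)$ is a positive real scalar, pulling $d(\rho)^{-1/2}$ out of the (conjugate-linear) first slot costs nothing, and the three scalar factors combine as $d(\rho)^{-1/2}\cdot d(\rho)\cdot d(\rho)^{-1/2} = 1$; the summand collapses to exactly $((\cF(x))(\rho))_{r,s}\,\chi_{\rho,r,s}$, yielding the claimed identity together with $L^2$-convergence. The only point requiring care — and the closest thing to an obstacle — is that the two inputs are genuinely justified: orthonormality of $\{d(\rho)^{-1/2}\chi_{\rho,r,s}\}$ follows from the Parseval computation in Proposition \ref{prop:Parseval} (which gives $(\chi_{\rho,r,s}|\chi_{\rho',r',s'})_{L^2(K)} = \delta_{\rho,\rho'}\delta_{r,r'}\delta_{s,s'}\,d(\rho)$), while completeness follows from density of $\Trig(K)$ in $L^2(K)$ together with $\cF$ being isometric onto $L^2(\widehat K)$ (Proposition \ref{prop:Parseval}) and the matrix units $w_{\rho,r}w_{\rho,s}^*$ spanning $L^2(\widehat K)$. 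Both are already available, so no new estimate is needed and the expansion is legitimate.
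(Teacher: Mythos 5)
Your proposal is correct and follows exactly the paper's own route: the paper derives the inversion formula as an immediate consequence of the inner-product computation $(\chi_{\rho,r,s}\,|\,x)_{L^2(K)} = ((\cF(x))(\rho))_{r,s}\,d(\rho)$ together with the fact that $\{d(\rho)^{-1/2}\chi_{\rho,r,s}\}_{\rho,r,s}$ is an orthonormal basis of $L^2(K)$, which is precisely your orthonormal-expansion argument with the same cancellation $d(\rho)^{-1/2}\cdot d(\rho)\cdot d(\rho)^{-1/2}=1$. Your added justification of completeness via density of $\Trig(K)$ and the isometry property of $\cF$ is consistent with what the paper implicitly relies on.
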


In the case of compact groups \cite[\Thm 2.4]{ChNg2005} and compact DJS hypergroups \cite[\Thm 4.1]{AlAm2017}, from the inversion formula for the Fourier transform and from Parseval's identity one can derive the Donoho--Stark uncertainty principle:
\begin{align}\label{eq:DonohoStarkuncertainty}
1 \leq \mu_K(\supp(f)) \sum_{\rho \in \supp \widehat f} n_\rho k_\rho
\end{align}
where $f\neq 0$ is a function in $L^2(K,\mu_K)$ and $\widehat f$ is its Fourier transform, $\mu_K$ is the Haar measure on the compact group or DJS hypergroup, $n_\rho$ is the dimension of the irreducible representation $\rho$ and $k_\rho$ is its hyperdimension \cite{Vr1979}, \cite{AmMe2014}.
Note that for compact groups $n_\rho = k_\rho$, and for subfactor theoretical compact hypergroups $n_\rho \leq k_\rho = d(\rho)$ \cite[\Cor 2.21, \Thm 6.5]{BiDeGi2021}. 

We prove a stronger version of the uncertainty principle \eqref{eq:DonohoStarkuncertainty} for local discrete subfactors, similar to the stronger version proved in \cite[\Thm 2]{AlRu2008} for compact groups. The dimension of the representation $n_\rho$ gets replaced with the rank of the matrix $(\cF(x))(\rho) \in M_{n_\rho}(\CC)$.
Using Proposition \ref{prop:Parseval} and Proposition \ref{prop:RiemannLebesgue}, we reformulate and prove the stronger uncertainty principle following the same argument used in \cite[\Thm 5.2]{JiLiWu2016}, \cite[\Prop 3.3]{LiWu2017} and \cite[\Thm 4.8]{LiPaWu2019-arxiv} respectively for finite index subfactors, fusion bialgebras and Kac-type compact quantum groups.
For an element $x$ in a von Neumann algebra $\A \subset \B(\Hil)$, denote by $[x]$ its support projection, \ie the smallest projection in $\B(\Hil)$ such that $x [x] = x$. Then $[x] \in \A$ and $[x] = [|x|]$. 

\begin{prop}
Denote by $\tau$ either the state $x \mapsto w^*xw$ on $L^\infty(K)$, \ie the restriction of $E$ to $\Hom(\gamma,\gamma)$, or the tracial weight $\Tr$ on $L^\infty(\widehat K)$. For every $x \in L^\infty(K)$,
$x\neq 0$, it holds
$$1 \leq \tau([x]) \tau([\cF(x)]).$$
\end{prop}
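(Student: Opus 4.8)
The plan is to follow the Donoho--Stark argument as in \cite[\Thm 5.2]{JiLiWu2016}, deriving the inequality purely from Parseval's identity (Proposition \ref{prop:Parseval}) and the Riemann--Lebesgue bound (Proposition \ref{prop:RiemannLebesgue}), together with the two support projections. Write $p := [x] \in L^\infty(K)$ and $q := [\cF(x)] \in L^\infty(\widehat K)$, so that $\tau([x]) = E(p) = w^* p w$ while $\tau([\cF(x)]) = \Tr(q)$. Since $x\neq 0$ and $E$ is faithful, we have $\|x\|_2^2 = E(x^*x) > 0$, and it is this strict positivity that we shall divide by at the end; in particular $E(p) > 0$, so the claim is trivial if $\Tr(q) = \infty$ and we may assume $\Tr(q) < \infty$.

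First I would bound $\|\cF(x)\|_2$ from above using the support of $\cF(x)$. Since $\cF(x) q = \cF(x)$, we have $\cF(x)^*\cF(x) = q\,\cF(x)^*\cF(x)\,q \leq \|\cF(x)\|_\infty^2\, q$, and applying the tracial weight $\Tr$ gives $\|\cF(x)\|_2^2 \leq \|\cF(x)\|_\infty^2\,\Tr(q)$. Combining this with Parseval's identity $\|x\|_2 = \|\cF(x)\|_2$ and with $\|\cF(x)\|_\infty \leq \|x\|_1$ from Proposition \ref{prop:RiemannLebesgue} yields $\|x\|_2^2 \leq \|x\|_1^2\,\Tr(q)$. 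Next I would bound $\|x\|_1$ in terms of $\|x\|_2$ and the support of $x$. Using $x = xp$, hence $|x| = |x|p$, and the Cauchy--Schwarz inequality for the faithful state $E$ on $L^\infty(K)$ (i.e.\ for the inner product $(a\,|\,b) = E(a^*b)$ on $L^2(K)$), I get $\|x\|_1 = E(|x|\,p) \leq \| |x| \|_2\,\|p\|_2 = \|x\|_2\,E(p)^{1/2}$, where $\| |x| \|_2 = E(|x|^2)^{1/2} = \|x\|_2$ and $\|p\|_2 = E(p)^{1/2}$. Thus $\|x\|_1^2 \leq \|x\|_2^2\,E(p)$, and substituting into the previous display produces $\|x\|_2^2 \leq \|x\|_2^2\,E(p)\,\Tr(q)$. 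Dividing by $\|x\|_2^2 > 0$ gives $1 \leq E(p)\,\Tr(q) = \tau([x])\,\tau([\cF(x)])$, as desired.

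The argument is short once the two Fourier inequalities are available, so the points requiring attention are bookkeeping rather than genuine obstacles: checking that the right support projection $p = [x] = [|x|]$ satisfies $|x| = |x|p$ (immediate in the commutative algebra $L^\infty(K)$, where $p$ is the indicator of $\supp x$, so that $E(p)$ recovers $\mu_E(\supp x)$), that $\| |x| \|_2 = \|x\|_2$, and that all relevant norms are finite ($\|x\|_1 \leq \|x\|_2 \leq \|x\|_\infty < \infty$ since $E$ is a state, and $\|\cF(x)\|_\infty \leq \|x\|_1 < \infty$). No step uses finiteness of the index. The main subtlety to state carefully is the faithfulness of $E$, which guarantees $\|x\|_2 > 0$ and hence legitimizes the final division.
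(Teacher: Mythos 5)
Your proof is correct and follows essentially the same route as the paper's: both arguments chain together the Riemann--Lebesgue bound $\|\cF(x)\|_\infty \leq \|x\|_1$, the Cauchy--Schwarz estimate $\|x\|_1 = E(|x|\,[x]) \leq \|x\|_2\,E([x])^{1/2}$, Parseval's identity, and the bound $\|\cF(x)\|_2^2 \leq \|\cF(x)\|_\infty^2\,\Tr([\cF(x)])$ via the support projection. The only cosmetic difference is that you divide by $\|x\|_2^2 > 0$ (justified by faithfulness of $E$) where the paper divides by $\|\cF(x)\|_\infty > 0$ (justified by injectivity of $\cF$); both are legitimate.
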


\begin{proof}
Compute
\begin{align}
\|\cF(x)\|_\infty &\leq \|x\|_1 \\
&= \tau(|x| [x]) \\
&\leq \|x\|_2 \|[x]\|_2 \\
&= \|\cF(x)\|_2 \|[x]\|_2 \\
&= \tau([\cF(x)]\cF(x)^*\cF(x)[\cF(x)])^{1/2} \|[x]\|_2 \\
&\leq \|\cF(x)\|_\infty \tau([\cF(x)])^{1/2} \|[x]\|_2 \\
&= \|\cF(x)\|_\infty \tau([\cF(x)])^{1/2} \tau([x])^{1/2}
\end{align}
where we used Proposition \ref{prop:RiemannLebesgue}, the Cauchy--Schwarz inequality, Proposition \ref{prop:Parseval} and the positivity of $\Tr$. If $x\neq 0$, \ie if $\cF(x)\neq 0$, dividing by $\|\cF(x)\|_\infty$ we get the statement.
\end{proof}

In our case at hand, $\tau([x]) = E([x])$ by Lemma \ref{lem:Estate}, hence also $\tau([x]) = \mu_E([x])$, and 
$$\tau([\cF(x)]) = \sum_\rho d(\rho) \rank((\cF(x))(\rho))$$
by our choice of normalization of $\Tr$. As a consequence, we obtain the following stronger version of the uncertainty principle \eqref{eq:DonohoStarkuncertainty}:

\begin{cor}[Donoho--Stark uncertainty principle]\label{prop:strongDonohoStarkuncertainty}
For every $x \in L^\infty(K)$,
$x\neq 0$, it holds
$$1 \leq \mu_E([x]) \sum_{\rho\in\supp \cF(x)} d(\rho) \rank((\cF(x))(\rho)).$$
\end{cor}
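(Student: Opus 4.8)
The plan is to deduce this directly from the preceding Proposition, which already supplies the inequality $1 \leq \tau([x])\,\tau([\cF(x)])$, by translating each of the two factors into the concrete quantities appearing in the statement. Since $[x] \in \Hom(\gamma,\gamma) = L^\infty(K)$ while $[\cF(x)] \in \Hom(\theta,\theta) = L^\infty(\widehat K)$, the symbol $\tau$ denotes the restriction of $E$ on the first factor and the tracial weight $\Tr$ on the second, so the Proposition reads $1 \leq E([x])\,\Tr([\cF(x)])$. All the analytic content, namely the chain of Riemann--Lebesgue (Proposition \ref{prop:RiemannLebesgue}), Cauchy--Schwarz, and Parseval (Proposition \ref{prop:Parseval}) estimates, together with the division by $\|\cF(x)\|_\infty$ justified by $x \neq 0$ and injectivity of $\cF$, is carried out there; the Corollary is then a reformulation via the dictionaries $E \leftrightarrow \mu_E$ and $\Tr \leftrightarrow \sum_\rho d(\rho)\rank(\cdot)$.

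First I would identify $E([x]) = \mu_E([x])$. By Lemma \ref{lem:Estate}, the restriction of $E$ to $\Hom(\gamma,\gamma)$ equals the state $z \mapsto w^*zw$, which under the *-isomorphism $\Hom(\gamma,\gamma) \cong L^\infty(K,\mu_E)$ of Proposition \ref{prop:gammagammaisLinfty} is integration against the Haar measure. Since in the commutative algebra $L^\infty(K)$ the support projection $[x]$ corresponds to the characteristic function of the essential support of the function representing $x$, this yields $E([x]) = \mu_E([x])$.

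Next I would compute $\Tr([\cF(x)])$ in terms of ranks, which is the only step requiring a small argument. Using $\Hom(\theta,\theta) \cong \bigoplus_{[\rho]} M_{n_\rho}(\CC)$, I would decompose $[\cF(x)]$ blockwise; in the block indexed by $\rho$ its component is the support projection of the matrix $(\cF(x))(\rho) \in M_{n_\rho}(\CC)$, hence a projection of matrix rank equal to $\rank((\cF(x))(\rho))$. Because $\Tr$ is tracial and normalized by $\Tr(1_{M_{n_\rho}(\CC)}) = n_\rho d(\rho)$, every minimal projection of $M_{n_\rho}(\CC)$ has trace $d(\rho)$, so a projection of rank $k$ in that block has trace $k\,d(\rho)$. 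Summing over the blocks gives
$$\Tr([\cF(x)]) = \sum_\rho d(\rho)\,\rank((\cF(x))(\rho)) = \sum_{\rho\in\supp \cF(x)} d(\rho)\,\rank((\cF(x))(\rho)),$$
only the $\rho \in \supp \cF(x)$ contributing nonzero terms.

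Substituting these two identities into $1 \leq E([x])\,\Tr([\cF(x)])$ yields the claim. There is no genuine obstacle beyond the bookkeeping in the rank computation: the subtlety, if any, is merely to keep track that the two support projections live in different algebras and are measured by the two different weights, so that the abstract inequality of the Proposition translates precisely into the advertised sharpened Donoho--Stark bound with $n_\rho$ replaced by $\rank((\cF(x))(\rho))$.
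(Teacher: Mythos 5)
Your proposal is correct and follows essentially the same route as the paper: the Corollary is obtained from the preceding Proposition by substituting $\tau([x]) = E([x]) = \mu_E([x])$ (via Lemma \ref{lem:Estate} and Proposition \ref{prop:gammagammaisLinfty}) and computing $\Tr([\cF(x)])$ blockwise from the normalization $\Tr(1_{M_{n_\rho}(\CC)}) = n_\rho d(\rho)$, so that the rank-$\rank((\cF(x))(\rho))$ support projection in the block $\rho$ contributes $d(\rho)\rank((\cF(x))(\rho))$. Your explicit verification that minimal projections have trace $d(\rho)$ and that the blockwise support projection has the stated rank is exactly the bookkeeping the paper leaves implicit.
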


\bigskip
\noindent
{\bf Acknowledgements.}
We thank Dietmar Bisch for many fruitful discussions on the topics treated in this paper, Sebastiano Carpi for comments on a previous version of the manuscript, Corey Jones for hinting to us a proof idea concerning intermediate inclusions. Some of the results contained in this work have been presented at the conference \lq\lq International Workshop on Operator Theory and its Applications IWOTA Lancaster UK 2021\rq\rq\ during the special session \lq\lq Quantum Groups and Algebraic Quantum Field Theory\rq\rq, supported by EPSRC grant EP/T007524/1. We acknowledge support from the \lq\lq MIUR Excellence Department Project awarded to the Department of Mathematics, University of Rome Tor Vergata, CUP E83C18000100006\rq\rq.

\bigskip
\def\cprime{$'$}\newcommand{\noopsort}[1]{}
\begin{bibdiv}
\begin{biblist}

\bib{AlAm2017}{article}{
      author={Alaghmandan, Mahmood},
      author={Amini, Massoud},
       title={Dual space and hyperdimension of compact hypergroups},
        date={2017},
        ISSN={0017-0895},
     journal={Glasg. Math. J.},
      volume={59},
      number={2},
       pages={421\ndash 435},
         url={https://mathscinet.ams.org/mathscinet-getitem?mr=3628938},
}

\bib{AcCe1982}{article}{
      author={Accardi, Luigi},
      author={Cecchini, Carlo},
       title={Conditional expectations in von {N}eumann algebras and a theorem
  of {T}akesaki},
        date={1982},
        ISSN={0022-1236},
     journal={J. Funct. Anal.},
      volume={45},
      number={2},
       pages={245\ndash 273},
         url={http://dx.doi.org/10.1016/0022-1236(82)90022-2},
}

\bib{AmMe2014}{article}{
      author={Amini, Massoud},
      author={Medghalchi, Ali~Reza},
       title={Amenability of compact hypergroup algebras},
        date={2014},
        ISSN={0025-584X},
     journal={Math. Nachr.},
      volume={287},
      number={14-15},
       pages={1609\ndash 1617},
         url={https://doi.org/10.1002/mana.201200284},
}

\bib{AlRu2008}{article}{
      author={Alagic, Gorjan},
      author={Russell, Alexander},
       title={Uncertainty principles for compact groups},
        date={2008},
        ISSN={0019-2082},
     journal={Illinois J. Math.},
      volume={52},
      number={4},
       pages={1315\ndash 1324},
         url={https://mathscinet.ams.org/mathscinet-getitem?mr=2595770},
}

\bib{Ar1969}{article}{
      author={Arveson, William~B.},
       title={Subalgebras of {$C^{\ast}$}-algebras},
        date={1969},
        ISSN={0001-5962},
     journal={Acta Math.},
      volume={123},
       pages={141\ndash 224},
         url={https://doi.org/10.1007/BF02392388},
}

\bib{BiChEvGiPe2020}{article}{
      author={Bischoff, Marcel},
      author={Charlesworth, Ian},
      author={Evington, Samuel},
      author={Giorgetti, Luca},
      author={Penneys, David},
       title={Distortion for multifactor bimodules and representations of
  multifusion categories},
        date={2020},
      note={\href{https://arxiv.org/abs/2010.01067}{Preprint arXiv:2010.01067}}
         url={https://arxiv.org/pdf/2010.01067.pdf},
}

\bib{BiDeGi2021}{article}{
      author={Bischoff, Marcel},
      author={Del~Vecchio, Simone},
      author={Giorgetti, Luca},
       title={Compact hypergroups from discrete subfactors},
        date={2021},
        ISSN={0022-1236},
     journal={J. Funct. Anal.},
      volume={281},
      number={1},
       pages={109004},
         url={https://mathscinet.ams.org/mathscinet-getitem?mr=4234861},
}

\bib{BcEv1998-I}{article}{
      author={B{\"o}ckenhauer, Jens},
      author={Evans, David~E.},
       title={{Modular invariants, graphs and {$\alpha$}-induction for nets of
  subfactors. {I}}},
        date={1998},
        ISSN={0010-3616},
     journal={Comm. Math. Phys.},
      volume={197},
      number={2},
       pages={361\ndash 386},
         url={http://dx.doi.org/10.1007/s002200050455},
}

\bib{BcEv1999-II}{article}{
      author={B{\"o}ckenhauer, J.},
      author={Evans, D.~E.},
       title={{Modular invariants, graphs and {$\alpha$}-induction for nets of
  subfactors. {II}}},
        date={1999},
        ISSN={0010-3616},
     journal={Comm. Math. Phys.},
      volume={200},
      number={1},
       pages={57\ndash 103},
         url={http://dx.doi.org/10.1007/s002200050523},
}

\bib{BcEv1999-III}{article}{
      author={B{\"o}ckenhauer, Jens},
      author={Evans, David~E.},
       title={{Modular invariants, graphs and {$\alpha$}-induction for nets of
  subfactors. {III}}},
        date={1999},
        ISSN={0010-3616},
     journal={Comm. Math. Phys.},
      volume={205},
      number={1},
       pages={183\ndash 228},
         url={http://dx.doi.org/10.1007/s002200050673},
}

\bib{BcEvKa2000}{article}{
      author={B{\"o}ckenhauer, Jens},
      author={Evans, David~E.},
      author={Kawahigashi, Yasuyuki},
       title={{Chiral structure of modular invariants for subfactors}},
        date={2000},
        ISSN={0010-3616},
     journal={Comm. Math. Phys.},
      volume={210},
      number={3},
       pages={733\ndash 784},
         url={http://dx.doi.org/10.1007/s002200050798},
}

\bib{BcEvKa1999}{article}{
      author={B{\"o}ckenhauer, Jens},
      author={Evans, David~E.},
      author={Kawahigashi, Yasuyuki},
       title={{On {$\alpha$}-induction, chiral generators and modular
  invariants for subfactors}},
        date={1999},
        ISSN={0010-3616},
     journal={Comm. Math. Phys.},
      volume={208},
      number={2},
       pages={429\ndash 487},
         url={http://dx.doi.org/10.1007/s002200050765},
}

\bib{BlHe1995}{book}{
      author={Bloom, Walter~R.},
      author={Heyer, Herbert},
       title={Harmonic analysis of probability measures on hypergroups},
      series={de Gruyter Studies in Mathematics},
   publisher={Walter de Gruyter \& Co., Berlin},
        date={1995},
      volume={20},
        ISBN={3-11-012105-0},
         url={http://dx.doi.org/10.1515/9783110877595},
}

\bib{Bi2016}{article}{
      author={Bischoff, Marcel},
       title={Generalized orbifold construction for conformal nets},
        date={2017},
        ISSN={0129-055X},
     journal={Rev. Math. Phys.},
      volume={29},
      number={1},
       pages={1750002, 53},
         url={http://dx.doi.org/10.1142/S0129055X17500027},
}

\bib{Bi1997}{incollection}{
      author={Bisch, Dietmar},
       title={Bimodules, higher relative commutants and the fusion algebra
  associated to a subfactor},
        date={1997},
   booktitle={Operator algebras and their applications ({W}aterloo, {ON},
  1994/1995)},
      series={Fields Inst. Commun.},
      volume={13},
   publisher={Amer. Math. Soc., Providence, RI},
       pages={13\ndash 63},
}

\bib{BiJo2000}{article}{
      author={Bisch, Dietmar},
      author={Jones, Vaughan},
       title={Singly generated planar algebras of small dimension},
        date={2000},
        ISSN={0012-7094},
     journal={Duke Math. J.},
      volume={101},
      number={1},
       pages={41\ndash 75},
         url={https://mathscinet.ams.org/mathscinet-getitem?mr=1733737},
}

\bib{BeKa1998}{book}{
      author={Berezansky, Yu.~M.},
      author={Kalyuzhnyi, A.~A.},
       title={Harmonic analysis in hypercomplex systems},
      series={Mathematics and its Applications},
   publisher={Kluwer Academic Publishers, Dordrecht},
        date={1998},
      volume={434},
        ISBN={0-7923-5029-4},
         url={https://doi.org/10.1007/978-94-017-1758-8},
        note={Translated from the 1992 Russian original by P. V. Malyshev and
  revised by the authors},
}

\bib{BiKaLoRe2014-2}{book}{
      author={Bischoff, Marcel},
      author={Kawahigashi, Yasuyuki},
      author={Longo, Roberto},
      author={Rehren, Karl-Henning},
       title={Tensor categories and endomorphisms of von {N}eumann
  algebras---with applications to quantum field theory},
      series={Springer Briefs in Mathematical Physics},
   publisher={Springer, Cham},
        date={2015},
      volume={3},
        ISBN={978-3-319-14300-2; 978-3-319-14301-9},
         url={http://dx.doi.org/10.1007/978-3-319-14301-9},
}

\bib{BiKaLoRe2014}{article}{
      author={Bischoff, Marcel},
      author={Kawahigashi, Yasuyuki},
      author={Longo, Roberto},
      author={Rehren, Karl-Henning},
       title={Phase {B}oundaries in {A}lgebraic {C}onformal {QFT}},
        date={2016},
        ISSN={0010-3616},
     journal={Comm. Math. Phys.},
      volume={342},
      number={1},
       pages={1\ndash 45},
         url={http://dx.doi.org/10.1007/s00220-015-2560-0},
}

\bib{CaCo2001}{article}{
      author={Carpi, Sebastiano},
      author={Conti, Roberto},
       title={Classification of subsystems for local nets with trivial
  superselection structure},
        date={2001},
        ISSN={0010-3616},
     journal={Comm. Math. Phys.},
      volume={217},
      number={1},
       pages={89\ndash 106},
         url={http://dx.doi.org/10.1007/PL00005550},
}

\bib{CaCo2001Siena}{incollection}{
      author={Carpi, Sebastiano},
      author={Conti, Roberto},
       title={Classification of subsystems, local symmetry generators and
  intrinsic definition of local observables},
        date={2001},
   booktitle={Mathematical physics in mathematics and physics ({S}iena, 2000)},
      series={Fields Inst. Commun.},
      volume={30},
   publisher={Amer. Math. Soc., Providence, RI},
       pages={83\ndash 103},
         url={https://doi.org/10.1007/pl00005550},
}

\bib{CaCo2005}{article}{
      author={Carpi, Sebastiano},
      author={Conti, Roberto},
       title={Classification of subsystems for graded-local nets with trivial
  superselection structure},
        date={2005},
        ISSN={0010-3616},
     journal={Comm. Math. Phys.},
      volume={253},
      number={2},
       pages={423\ndash 449},
         url={http://dx.doi.org/10.1007/s00220-004-1135-2},
}

\bib{CoDoRo2001}{article}{
      author={Conti, Roberto},
      author={Doplicher, Sergio},
      author={Roberts, John~E.},
       title={Superselection theory for subsystems},
        date={2001},
        ISSN={0010-3616},
     journal={Comm. Math. Phys.},
      volume={218},
      number={2},
       pages={263\ndash 281},
         url={http://dx.doi.org/10.1007/s002200100392},
}

\bib{CaHiKaLoXu15}{article}{
      author={Carpi, Sebastiano},
      author={Hillier, Robin},
      author={Kawahigashi, Yasuyuki},
      author={Longo, Roberto},
      author={Xu, Feng},
       title={{$N=2$} superconformal nets},
        date={2015},
        ISSN={0010-3616},
     journal={Comm. Math. Phys.},
      volume={336},
      number={3},
       pages={1285\ndash 1328},
         url={https://mathscinet.ams.org/mathscinet-getitem?mr=3324145},
}

\bib{CaKaLo2010}{article}{
      author={Carpi, Sebastiano},
      author={Kawahigashi, Yasuyuki},
      author={Longo, Roberto},
       title={On the {J}ones index values for conformal subnets},
        date={2010},
        ISSN={0377-9017},
     journal={Lett. Math. Phys.},
      volume={92},
      number={2},
       pages={99\ndash 108},
         url={https://doi.org/10.1007/s11005-010-0384-6},
}

\bib{ChNg2005}{article}{
      author={Chua, Kok~Seng},
      author={Ng, Wee~Seng},
       title={A simple proof of the uncertainty principle for compact groups},
        date={2005},
        ISSN={0723-0869},
     journal={Expo. Math.},
      volume={23},
      number={2},
       pages={147\ndash 150},
         url={https://mathscinet.ams.org/mathscinet-getitem?mr=2155006},
}

\bib{Co1980}{article}{
      author={Connes, A.},
       title={On the spatial theory of von {N}eumann algebras},
        date={1980},
        ISSN={0022-1236},
     journal={J. Functional Analysis},
      volume={35},
      number={2},
       pages={153\ndash 164},
         url={https://mathscinet.ams.org/mathscinet-getitem?mr=561983},
}

\bib{ChVa1999}{article}{
      author={Chapovsky, Yu.~A.},
      author={Vainerman, L.~I.},
       title={Compact quantum hypergroups},
        date={1999},
        ISSN={0379-4024},
     journal={J. Operator Theory},
      volume={41},
      number={2},
       pages={261\ndash 289},
}

\bib{Da1996}{article}{
      author={David, Marie-Claude},
       title={Paragroupe d'{A}drian {O}cneanu et alg\`ebre de {K}ac},
        date={1996},
        ISSN={0030-8730},
     journal={Pacific J. Math.},
      volume={172},
      number={2},
       pages={331\ndash 363},
         url={http://projecteuclid.org/euclid.pjm/1102366014},
}

\bib{DaGhGu2014}{article}{
      author={Das, Paramita},
      author={Ghosh, Shamindra~Kumar},
      author={Gupta, Ved~Prakash},
       title={Perturbations of planar algebras},
        date={2014},
        ISSN={0025-5521},
     journal={Math. Scand.},
      volume={114},
      number={1},
       pages={38\ndash 85},
         url={https://doi.org/10.7146/math.scand.a-16639},
}

\bib{DoHaRo1971}{article}{
      author={Doplicher, Sergio},
      author={Haag, Rudolf},
      author={Roberts, John~E.},
       title={Local observables and particle statistics. {I}},
        date={1971},
        ISSN={0010-3616},
     journal={Comm. Math. Phys.},
      volume={23},
       pages={199\ndash 230},
}

\bib{DiMa1971}{article}{
      author={Dixmier, J.},
      author={Mar\'{e}chal, O.},
       title={Vecteurs totalisateurs d'une alg{\`e}bre de von {N}eumann},
        date={1971},
        ISSN={0010-3616},
     journal={Comm. Math. Phys.},
      volume={22},
       pages={44\ndash 50},
         url={https://mathscinet.ams.org/mathscinet-getitem?mr=296708},
}

\bib{DoRo1972}{article}{
      author={Doplicher, Sergio},
      author={Roberts, John~E.},
       title={Fields, statistics and non-abelian gauge groups},
        date={1972},
        ISSN={0010-3616},
     journal={Comm. Math. Phys.},
      volume={28},
       pages={331\ndash 348},
}

\bib{De2013}{article}{
      author={Degenfeld-Schonburg, Sina},
       title={On the {H}ausdorff-{Y}oung theorem for commutative hypergroups},
        date={2013},
        ISSN={0010-1354},
     journal={Colloq. Math.},
      volume={131},
      number={2},
       pages={219\ndash 231},
         url={https://mathscinet.ams.org/mathscinet-getitem?mr=3092452},
}

\bib{DeFiRo2021}{article}{
      author={Del~Vecchio, Simone},
      author={Fidaleo, Francesco},
      author={Rossi, Stefano},
       title={Skew-product dynamical systems for crossed product
  {$C^*$}-algebras and their ergodic properties},
        date={2021},
        ISSN={0022-247X},
     journal={J. Math. Anal. Appl.},
      volume={503},
      number={1},
       pages={125302},
         url={https://doi.org/10.1016/j.jmaa.2021.125302},
}

\bib{DeGi2018}{article}{
      author={Del~Vecchio, Simone},
      author={Giorgetti, Luca},
       title={Infinite index extensions of local nets and defects},
        date={2018},
        ISSN={0129-055X},
     journal={Rev. Math. Phys.},
      volume={30},
      number={2},
       pages={1850002, 58},
         url={https://doi.org/10.1142/S0129055X18500022},
}

\bib{EGNO15}{book}{
      author={Etingof, Pavel},
      author={Gelaki, Shlomo},
      author={Nikshych, Dmitri},
      author={Ostrik, Victor},
       title={Tensor categories},
      series={Mathematical Surveys and Monographs},
   publisher={American Mathematical Society, Providence, RI},
        date={2015},
      volume={205},
        ISBN={978-1-4704-2024-6},
}

\bib{EvKa1998Book}{book}{
      author={Evans, David~E.},
      author={Kawahigashi, Yasuyuki},
       title={{Quantum symmetries on operator algebras}},
      series={{Oxford Mathematical Monographs}},
   publisher={The Clarendon Press Oxford University Press},
     address={New York},
        date={1998},
        ISBN={0-19-851175-2},
        note={Oxford Science Publications},
}

\bib{EnNe1996}{article}{
      author={Enock, Michel},
      author={Nest, Ryszard},
       title={Irreducible inclusions of factors, multiplicative unitaries, and
  {K}ac algebras},
        date={1996},
        ISSN={0022-1236},
     journal={J. Funct. Anal.},
      volume={137},
      number={2},
       pages={466\ndash 543},
         url={http://dx.doi.org/10.1006/jfan.1996.0053},
}

\bib{EvPi2003}{article}{
      author={Evans, David~E.},
      author={Pinto, Paulo~R.},
       title={{Subfactor realisation of modular invariants}},
        date={2003},
        ISSN={0010-3616},
     journal={Comm. Math. Phys.},
      volume={237},
      number={1-2},
       pages={309\ndash 363},
         url={http://dx.doi.org/10.1142/S0129167X12500309},
}

\bib{FiIs1999}{article}{
      author={Fidaleo, F.},
      author={Isola, T.},
       title={The canonical endomorphism for infinite index inclusions},
        date={1999},
        ISSN={0232-2064},
     journal={Z. Anal. Anwendungen},
      volume={18},
      number={1},
       pages={47\ndash 66},
         url={http://dx.doi.org/10.4171/ZAA/869},
}

\bib{FrReSc1989}{article}{
      author={Fredenhagen, K.},
      author={Rehren, K.-H.},
      author={Schroer, B.},
       title={{Superselection sectors with braid group statistics and exchange
  algebras. {I}.\ {G}eneral theory}},
        date={1989},
        ISSN={0010-3616},
     journal={Comm. Math. Phys.},
      volume={125},
      number={2},
       pages={201\ndash 226},
         url={http://projecteuclid.org/getRecord?id=euclid.cmp/1104179464},
}

\bib{GoHaJo1989}{book}{
      author={Goodman, Frederick~M.},
      author={de~la Harpe, Pierre},
      author={Jones, Vaughan F.~R.},
       title={{Coxeter graphs and towers of algebras}},
      series={{Mathematical Sciences Research Institute Publications}},
   publisher={Springer-Verlag},
     address={New York},
        date={1989},
      volume={14},
        ISBN={0-387-96979-9},
         url={http://dx.doi.org/10.1007/978-1-4613-9641-3},
}

\bib{GiLo2019}{article}{
      author={Giorgetti, Luca},
      author={Longo, Roberto},
       title={Minimal index and dimension for 2-{$C^*$}-categories with
  finite-dimensional centers},
        date={2019},
        ISSN={0010-3616},
     journal={Comm. Math. Phys.},
      volume={370},
      number={2},
       pages={719\ndash 757},
         url={https://doi.org/10.1007/s00220-018-3266-x},
}

\bib{GiRe2018}{article}{
      author={Giorgetti, Luca},
      author={Rehren, Karl-Henning},
       title={Braided categories of endomorphisms as invariants for local
  quantum field theories},
        date={2018},
        ISSN={0010-3616},
     journal={Comm. Math. Phys.},
      volume={357},
      number={1},
       pages={3\ndash 41},
         url={https://doi.org/10.1007/s00220-017-2937-3},
}

\bib{GiYu2019}{article}{
      author={Giorgetti, Luca},
      author={Yuan, Wei},
       title={Realization of rigid $C^*$-tensor categories via {T}omita
  bimodules.},
        date={2019},
        ISSN={0379-4024},
     journal={J. Operator Theory},
      volume={81},
      number={2},
       pages={433\ndash479},
         url={https://doi.org/10.7900/jot.2018mar08.2219},
}

\bib{Ha1979I}{article}{
      author={Haagerup, Uffe},
       title={Operator-valued weights in von {N}eumann algebras. {I}},
        date={1979},
        ISSN={0022-1236},
     journal={J. Funct. Anal.},
      volume={32},
      number={2},
       pages={175\ndash 206},
         url={http://dx.doi.org/10.1016/0022-1236(79)90053-3},
}

\bib{Ha1979II}{article}{
      author={Haagerup, Uffe},
       title={Operator-valued weights in von {N}eumann algebras. {II}},
        date={1979},
        ISSN={0022-1236},
     journal={J. Funct. Anal.},
      volume={33},
      number={3},
       pages={339\ndash 361},
         url={http://dx.doi.org/10.1016/0022-1236(79)90072-7},
}

\bib{Ha}{book}{
      author={Haag, Rudolf},
       title={{Local quantum physics}},
   publisher={Springer Berlin},
        date={1996},
}

\bib{HeOc1989}{article}{
      author={Herman, Richard~H.},
      author={Ocneanu, Adrian},
       title={Index theory and {G}alois theory for infinite index inclusions of
  factors},
        date={1989},
        ISSN={0764-4442},
     journal={C. R. Acad. Sci. Paris S{\'e}r. I Math.},
      volume={309},
      number={17},
       pages={923\ndash 927},
}

\bib{IzLoPo1998}{article}{
      author={Izumi, Masaki},
      author={Longo, Roberto},
      author={Popa, Sorin},
       title={{A {G}alois correspondence for compact groups of automorphisms of
  von {N}eumann algebras with a generalization to {K}ac algebras}},
        date={1998},
        ISSN={0022-1236},
     journal={J. Funct. Anal.},
      volume={155},
      number={1},
       pages={25\ndash 63},
         url={http://dx.doi.org/10.1006/jfan.1997.3228},
}

\bib{JaJiLiReWu2020}{article}{
      author={Jaffe, Arthur},
      author={Jiang, Chunlan},
      author={Liu, Zhengwei},
      author={Ren, Yunxiang},
      author={Wu, Jinsong},
       title={Quantum {F}ourier analysis},
        date={2020},
        ISSN={0027-8424},
     journal={Proc. Natl. Acad. Sci. USA},
      volume={117},
      number={20},
       pages={10715\ndash 10720},
         url={https://mathscinet.ams.org/mathscinet-getitem?mr=4236188},
}

\bib{JiLiWu2016}{article}{
      author={Jiang, Chunlan},
      author={Liu, Zhengwei},
      author={Wu, Jinsong},
       title={Noncommutative uncertainty principles},
        date={2016},
        ISSN={0022-1236},
     journal={J. Funct. Anal.},
      volume={270},
      number={1},
       pages={264\ndash 311},
         url={https://doi.org/10.1016/j.jfa.2015.08.007},
}

\bib{JiLiWu2018}{article}{
      author={Jiang, Chunlan},
      author={Liu, Zhengwei},
      author={Wu, Jinsong},
       title={Uncertainty principles for locally compact quantum groups},
        date={2018},
        ISSN={0022-1236},
     journal={J. Funct. Anal.},
      volume={274},
      number={8},
       pages={2399\ndash 2445},
         url={https://mathscinet.ams.org/mathscinet-getitem?mr=3767437},
}

\bib{Jo1983}{article}{
      author={Jones, V. F.~R.},
       title={{Index for subfactors}},
        date={1983},
        ISSN={0020-9910},
     journal={Invent. Math.},
      volume={72},
      number={1},
       pages={1\ndash 25},
         url={http://dx.doi.org/10.1007/BF01389127},
}

\bib{Jo1999}{article}{
      author={Jones, V. F.~R.},
       title={{Planar Algebras, $I$}},
        date={1999},
        note={\href{https://arxiv.org/abs/math/9909027}{Preprint arXiv:math/9909027}}
         url={https://arxiv.org/abs/math/9909027},
}

\bib{JoPe2011}{article}{
      author={Jones, Vaughan F.~R.},
      author={Penneys, David},
       title={The embedding theorem for finite depth subfactor planar
  algebras},
        date={2011},
        ISSN={1663-487X},
     journal={Quantum Topol.},
      volume={2},
      number={3},
       pages={301\ndash 337},
         url={https://mathscinet.ams.org/mathscinet-getitem?mr=2812459},
}

\bib{JoPe2019}{article}{
      author={Jones, Corey},
      author={Penneys, David},
       title={Realizations of algebra objects and discrete subfactors},
        date={2019},
        ISSN={0001-8708},
     journal={Adv. Math.},
      volume={350},
       pages={588\ndash 661},
         url={https://doi.org/10.1016/j.aim.2019.04.039},
}

\bib{JoSu1997}{book}{
      author={Jones, V. F.~R.},
      author={Sunder, V.~S.},
       title={{Introduction to subfactors}},
      series={{London Mathematical Society Lecture Note Series}},
   publisher={Cambridge University Press},
     address={Cambridge},
        date={1997},
      volume={234},
        ISBN={0-521-58420-5},
         url={http://dx.doi.org/10.1017/CBO9780511566219},
}

\bib{Palermo1989}{proceedings}{
      editor={Kastler, Daniel},
       title={The algebraic theory of superselection sectors},
   publisher={World Scientific Publishing Co., Inc., River Edge, NJ},
        date={1990},
        ISBN={981-02-0206-7},
         url={https://doi.org/10.1142/1093},
}

\bib{Ko1984}{article}{
      author={Kosaki, Hideki},
       title={Applications of the complex interpolation method to a von
  {N}eumann algebra: noncommutative {$L^{p}$}-spaces},
        date={1984},
        ISSN={0022-1236},
     journal={J. Funct. Anal.},
      volume={56},
      number={1},
       pages={29\ndash 78},
         url={https://mathscinet.ams.org/mathscinet-getitem?mr=735704},
}

\bib{Ko1986}{article}{
      author={Kosaki, Hideki},
       title={{Extension of {J}ones' theory on index to arbitrary factors}},
        date={1986},
        ISSN={0022-1236},
     journal={J. Funct. Anal.},
      volume={66},
      number={1},
       pages={123\ndash 140},
         url={http://dx.doi.org/10.1016/0022-1236(86)90085-6},
}

\bib{Ko1989}{article}{
      author={Kosaki, Hideki},
       title={Characterization of crossed product (properly infinite case)},
        date={1989},
        ISSN={0030-8730},
     journal={Pacific J. Math.},
      volume={137},
      number={1},
       pages={159\ndash 167},
         url={http://projecteuclid.org/euclid.pjm/1102650542},
}

\bib{Ko1998}{book}{
      author={Kosaki, Hideki},
       title={{Type {III} factors and index theory}},
      series={{Lecture Notes Series}},
   publisher={Seoul National University Research Institute of Mathematics
  Global Analysis Research Center},
     address={Seoul},
        date={1998},
      volume={43},
}

\bib{KaPoCh2010}{article}{
      author={Kalyuzhnyi, A.~A.},
      author={Podkolzin, G.~B.},
      author={Chapovsky, Yu.~A.},
       title={Harmonic analysis on a locally compact hypergroup},
        date={2010},
        ISSN={1029-3531},
     journal={Methods Funct. Anal. Topology},
      volume={16},
      number={4},
       pages={304\ndash 332},
}

\bib{KuRu2020-arxiv}{article}{
      author={Kumar, Vishvesh},
      author={Ruzhansky, Michael},
       title={{H}ardy-{L}ittlewood inequality and ${L}^p$-${L}^q$ {F}ourier
  multipliers on compact hypergroups},
        date={2020},
      note={\href{https://arxiv.org/abs/2005.08464}{Preprint arXiv:2005.08464}}
         url={https://arxiv.org/pdf/2005.08464.pdf},
}

\bib{KuSa2020}{article}{
      author={Kumar, Vishvesh},
      author={Sarma, Ritumoni},
       title={The {H}ausdorff-{Y}oung inequality for {O}rlicz spaces on compact
  hypergroups},
        date={2020},
        ISSN={0010-1354},
     journal={Colloq. Math.},
      volume={160},
      number={1},
       pages={41\ndash 51},
         url={https://mathscinet.ams.org/mathscinet-getitem?mr=4071812},
}

\bib{KuVa2003}{article}{
      author={Kustermans, Johan},
      author={Vaes, Stefaan},
       title={Locally compact quantum groups in the von {N}eumann algebraic
  setting},
        date={2003},
        ISSN={0025-5521},
     journal={Math. Scand.},
      volume={92},
      number={1},
       pages={68\ndash 92},
         url={https://doi.org/10.7146/math.scand.a-14394},
}

\bib{Lo2018}{article}{
      author={Longo, Roberto},
       title={On {L}andauer's principle and bound for infinite systems},
        date={2018},
        ISSN={0010-3616},
     journal={Comm. Math. Phys.},
      volume={363},
      number={2},
       pages={531\ndash 560},
  url={https://doi-org.proxy.library.ohio.edu/10.1007/s00220-018-3116-x},
}

\bib{Lo1987}{article}{
      author={Longo, Roberto},
       title={{Simple injective subfactors}},
        date={1987},
        ISSN={0001-8708},
     journal={Adv. Math.},
      volume={63},
      number={2},
       pages={152\ndash 171},
  url={http://www.sciencedirect.com/science/article/B6W9F-4CRY32C-NJ/2/ad65ab20fcbb926ca7c0c1d780ddc8fc},
}

\bib{Lo1989}{article}{
      author={Longo, Roberto},
       title={{Index of subfactors and statistics of quantum fields. I}},
        date={1989},
     journal={Comm. Math. Phys.},
      volume={126},
       pages={217\ndash 247},
}

\bib{Lo1990}{article}{
      author={Longo, Roberto},
       title={{Index of subfactors and statistics of quantum fields. II.
  Correspondences, Braid Group Statistics and Jones Polynomial}},
        date={1990},
     journal={Comm. Math. Phys.},
      volume={130},
       pages={285\ndash 309},
}

\bib{Lo1992}{article}{
      author={Longo, Roberto},
       title={Minimal index and braided subfactors},
        date={1992},
        ISSN={0022-1236},
     journal={J. Funct. Anal.},
      volume={109},
      number={1},
       pages={98\ndash 112},
         url={http://dx.doi.org/10.1016/0022-1236(92)90013-9},
}

\bib{Lo1994}{article}{
      author={Longo, Roberto},
       title={{A duality for {H}opf algebras and for subfactors. {I}}},
        date={1994},
        ISSN={0010-3616},
     journal={Comm. Math. Phys.},
      volume={159},
      number={1},
       pages={133\ndash 150},
         url={http://projecteuclid.org/getRecord?id=euclid.cmp/1104254494},
}

\bib{LiPaWu2019-arxiv}{article}{
      author={Liu, Zhengwei},
      author={Palcoux, Sebastien},
      author={Wu, Jinsong},
       title={Fusion bialgebras and fourier analysis},
        date={2019},
      note={\href{https://arxiv.org/abs/1910.12059}{Preprint arXiv:1910.12059}}
         url={https://arxiv.org/pdf/1910.12059.pdf},
}

\bib{LoRe1995}{article}{
      author={Longo, Roberto},
      author={Rehren, Karl-Henning},
       title={{Nets of Subfactors}},
        date={1995},
     journal={Rev. Math. Phys.},
      volume={7},
       pages={567\ndash 597},
}

\bib{LoRo1997}{article}{
      author={Longo, R.},
      author={Roberts, J.~E.},
       title={{A theory of dimension}},
        date={1997},
        ISSN={0920-3036},
     journal={K-Theory},
      volume={11},
      number={2},
       pages={103\ndash 159},
      note={Available at \href{http://dx.doi.org/10.1023/A:1007714415067}{http://dx.doi.org/10.1023/A:1007714415067}}
         url={http://dx.doi.org/10.1023/A:1007714415067},
}

\bib{LiWu2017}{article}{
      author={Liu, Zhengwei},
      author={Wu, Jinsong},
       title={Uncertainty principles for {K}ac algebras},
        date={2017},
        ISSN={0022-2488},
     journal={J. Math. Phys.},
      volume={58},
      number={5},
       pages={052102, 12},
         url={https://mathscinet.ams.org/mathscinet-getitem?mr=3652772},
}

\bib{Ne1974}{article}{
      author={Nelson, Edward},
       title={Notes on non-commutative integration},
        date={1974},
     journal={J. Funct. Anal.},
      volume={15},
       pages={103\ndash 116},
         url={https://mathscinet.ams.org/mathscinet-getitem?mr=0355628},
}

\bib{NiStZs2003}{article}{
      author={Niculescu, Constantin~P.},
      author={Str{\"o}h, Anton},
      author={Zsid{\'o}, L{\'a}szl{\'o}},
       title={Noncommutative extensions of classical and multiple recurrence
  theorems},
        date={2003},
        ISSN={0379-4024},
     journal={J. Operator Theory},
      volume={50},
      number={1},
       pages={3\ndash 52},
}

\bib{NaTa1960}{article}{
      author={Nakamura, Masahiro},
      author={Takeda, Zir\^{o}},
       title={A {G}alois theory for finite factors},
        date={1960},
        ISSN={0021-4280},
     journal={Proc. Japan Acad.},
      volume={36},
       pages={258\ndash 260},
         url={https://mathscinet.ams.org/mathscinet-getitem?mr=123925},
}

\bib{NiWi1995}{article}{
      author={Nill, Florian},
      author={Wiesbrock, Hans-Werner},
       title={A comment on {J}ones inclusions with infinite index},
        date={1995},
        ISSN={0129-055X},
     journal={Rev. Math. Phys.},
      volume={7},
      number={4},
       pages={599\ndash 630},
         url={http://dx.doi.org/10.1142/S0129055X95000244},
}

\bib{Oc1988}{incollection}{
      author={Ocneanu, Adrian},
       title={Quantized groups, string algebras and {G}alois theory for
  algebras},
        date={1988},
   booktitle={Operator algebras and applications, {V}ol.\ 2},
      series={London Math. Soc. Lecture Note Ser.},
      volume={136},
   publisher={Cambridge Univ. Press, Cambridge},
       pages={119\ndash 172},
}

\bib{Oc1991}{article}{
      author={Ocneanu, Adrian},
       title={{Quantum symmetry, differential geometry of finite graphs and
  classification of subfactors}},
        date={1991},
     journal={University of Tokyo Seminary Notes},
      number={45},
        note={(Notes recorded by Y. Kawahigashi)},
}

\bib{Pa1973}{article}{
      author={Paschke, William~L.},
       title={Inner product modules over {$B^{\ast} $}-algebras},
        date={1973},
        ISSN={0002-9947},
     journal={Trans. Amer. Math. Soc.},
      volume={182},
       pages={443\ndash 468},
}

\bib{Po1990}{article}{
      author={Popa, S.},
       title={Classification of subfactors: the reduction to commuting
  squares},
        date={1990},
        ISSN={0020-9910},
     journal={Invent. Math.},
      volume={101},
      number={1},
       pages={19\ndash 43},
         url={https://doi.org/10.1007/BF01231494},
}

\bib{Po1995}{article}{
      author={Popa, Sorin},
       title={An axiomatization of the lattice of higher relative commutants of
  a subfactor},
        date={1995},
        ISSN={0020-9910},
     journal={Invent. Math.},
      volume={120},
      number={3},
       pages={427\ndash 445},
         url={http://dx.doi.org/10.1007/BF01241137},
}

\bib{Po1995a}{book}{
      author={Popa, Sorin},
       title={Classification of subfactors and their endomorphisms},
      series={CBMS Regional Conference Series in Mathematics},
   publisher={Published for the Conference Board of the Mathematical Sciences,
  Washington, DC; by the American Mathematical Society, Providence, RI},
        date={1995},
      volume={86},
        ISBN={0-8218-0321-2},
}

\bib{Po1999}{article}{
      author={Popa, Sorin},
       title={Some properties of the symmetric enveloping algebra of a
  subfactor, with applications to amenability and property {T}},
        date={1999},
        ISSN={1431-0635},
     journal={Doc. Math.},
      volume={4},
       pages={665\ndash 744},
}

\bib{PiPo1986}{article}{
      author={Pimsner, M.},
      author={Popa, S.},
       title={{Entropy and index for subfactors}},
        date={1986},
     journal={Ann. Sci. Ecole Norm. Sup},
      volume={19},
      number={4},
       pages={57\ndash 106},
}

\bib{PoShVa2018}{article}{
      author={Popa, Sorin},
      author={Shlyakhtenko, Dimitri},
      author={Vaes, Stefaan},
       title={Cohomology and {$L^2$}-{B}etti numbers for subfactors and
  quasi-regular inclusions},
        date={2018},
        ISSN={1073-7928},
     journal={Int. Math. Res. Not. IMRN},
      number={8},
       pages={2241\ndash 2331},
         url={https://doi.org/10.1093/imrn/rnw304},
}

\bib{Re1995}{article}{
      author={Rehren, Karl-Henning},
       title={On the range of the index of subfactors},
        date={1995},
        ISSN={0022-1236},
     journal={J. Funct. Anal.},
      volume={134},
      number={1},
       pages={183\ndash 193},
         url={https://doi.org/10.1006/jfan.1995.1141},
}

\bib{Ro1976}{article}{
      author={Roberts, John~E.},
       title={{Local cohomology and superselection structure}},
        date={1976},
        ISSN={0010-3616},
     journal={Comm. Math. Phys.},
      volume={51},
      number={2},
       pages={107\ndash 119},
}

\bib{Ro1990}{article}{
      author={Roberts, John~E.},
       title={{Lectures on algebraic quantum field theory}},
       pages={1\ndash 112},
        note={In {\protect\NoHyper\cite{Palermo1989}\protect\endNoHyper}},
}

\bib{RudRC}{book}{
      author={Rudin, Walter},
       title={Real and complex analysis},
     edition={Second},
   publisher={McGraw-Hill Book Co., New York-D\"{u}sseldorf-Johannesburg},
        date={1974},
        note={McGraw-Hill Series in Higher Mathematics},
}

\bib{Sa1997}{article}{
      author={Sato, Nobuya},
       title={Fourier transform for paragroups and its application to the depth
  two case},
        date={1997},
        ISSN={0034-5318},
     journal={Publ. Res. Inst. Math. Sci.},
      volume={33},
      number={2},
       pages={189\ndash 222},
         url={http://dx.doi.org/10.2977/prims/1195145447},
}

\bib{Sa1983}{article}{
      author={Sauvageot, Jean-Luc},
       title={Sur le produit tensoriel relatif d'espaces de {H}ilbert},
        date={1983},
     journal={J. Operator Theory},
      volume={9},
      number={2},
       pages={237\ndash 252},
}

\bib{St1997}{incollection}{
      author={St{\o}rmer, E.},
       title={Conditional expectations and projection maps of von {N}eumann
  algebras},
        date={1997},
   booktitle={Operator algebras and applications ({S}amos, 1996)},
      series={NATO Adv. Sci. Inst. Ser. C Math. Phys. Sci.},
      volume={495},
   publisher={Kluwer Acad. Publ., Dordrecht},
       pages={449\ndash 461},
         url={https://mathscinet.ams.org/mathscinet-getitem?mr=1462691},
}

\bib{SuWi2003}{article}{
      author={Sunder, V.~S.},
      author={Wildberger, N.~J.},
       title={Actions of finite hypergroups},
        date={2003},
        ISSN={0925-9899},
     journal={J. Algebraic Combin.},
      volume={18},
      number={2},
       pages={135\ndash 151},
         url={http://dx.doi.org/10.1023/A:1025107014451},
}

\bib{Sz1994}{article}{
      author={Szyma\'nski, Wojciech},
       title={Finite index subfactors and {H}opf algebra crossed products},
        date={1994},
        ISSN={0002-9939},
     journal={Proc. Amer. Math. Soc.},
      volume={120},
      number={2},
       pages={519\ndash 528},
         url={http://dx.doi.org/10.2307/2159890},
}

\bib{TePhD}{thesis}{
      author={Terp, Marianne},
       title={{$L^p$} spaces associated with von {N}eumann algebras.},
        type={Ph.D. Thesis},
        date={1981},
}

\bib{Te1982}{article}{
      author={Terp, Marianne},
       title={Interpolation spaces between a von {N}eumann algebra and its
  predual},
        date={1982},
        ISSN={0379-4024},
     journal={J. Operator Theory},
      volume={8},
      number={2},
       pages={327\ndash 360},
         url={https://mathscinet.ams.org/mathscinet-getitem?mr=677418},
}

\bib{To2009}{article}{
      author={Tomatsu, Reiji},
       title={A {G}alois correspondence for compact quantum group actions},
        date={2009},
        ISSN={0075-4102},
     journal={J. Reine Angew. Math.},
      volume={633},
       pages={165\ndash 182},
         url={https://doi.org/10.1515/CRELLE.2009.063},
}

\bib{Va2001}{article}{
      author={Vaes, Stefaan},
       title={The unitary implementation of a locally compact quantum group
  action},
        date={2001},
        ISSN={0022-1236},
     journal={J. Funct. Anal.},
      volume={180},
      number={2},
       pages={426\ndash 480},
         url={https://doi.org/10.1006/jfan.2000.3704},
}

\bib{Vr1979}{article}{
      author={Vrem, Richard~C.},
       title={Harmonic analysis on compact hypergroups},
        date={1979},
        ISSN={0030-8730},
     journal={Pacific J. Math.},
      volume={85},
      number={1},
       pages={239\ndash 251},
         url={http://projecteuclid.org/euclid.pjm/1102784093},
}

\bib{Xu2005}{article}{
      author={Xu, Feng},
       title={{Strong additivity and conformal nets}},
        date={2005},
        ISSN={0030-8730},
     journal={Pacific J. Math.},
      volume={221},
      number={1},
       pages={167\ndash 199},
         url={http://dx.doi.org/10.2140/pjm.2005.221.167},
}

\bib{Xu1998}{article}{
      author={Xu, Feng},
       title={New braided endomorphisms from conformal inclusions},
        date={1998},
        ISSN={0010-3616},
     journal={Comm. Math. Phys.},
      volume={192},
      number={2},
       pages={349\ndash 403},
         url={http://dx.doi.org/10.1007/s002200050302},
}

\end{biblist}
\end{bibdiv}

\bigskip

\address

\end{document}